\NewDocumentCommand{\nsubset}{}{\not\subset}
\newcommand{\fdag}[1]{\mathcal{F}_{#1}}
\DeclareMathOperator{\Over}{\normalfont{\textbf{Over}}}
\DeclareMathOperator{\Perco}{\normalfont{\textbf{Perco}}}
\DeclareMathOperator{\Centered}{\normalfont{\textbf{Inside}}}
\DeclareMathOperator{\Uncentered}{\normalfont{\textbf{Outside}}}
\DeclareMathOperator{\Stab}{\normalfont{\textbf{Stab}}}
\newcommand\smallO{
\mathchoice
{{\scriptstyle\mathcal{O}}}% \displaystyle
{{\scriptstyle\mathcal{O}}}% \textstyle
{{\scriptscriptstyle\mathcal{O}}}% \scriptstyle
{\scalebox{.7}{$\scriptscriptstyle\mathcal{O}$}}%\scriptscriptstyle
}
\numberwithin{equation}{section}
\theoremstyle{plain}
\newtheorem{theorem}{Theorem}[section]
\newtheorem{proposition}[theorem]{Proposition}
\newtheorem{lemma}[theorem]{Lemma}
\title{Construction of ergodic \textsc{idla} forests in $\mathbb{Z}^d$}
\author[1]{\textsc{Nicolas Chenavier}}
\author[2]{\textsc{David Coupier}}
\author[1]{\textsc{Keenan Penner}}
\author[3]{\textsc{Arnaud Rousselle}} 
\affil[1]{Université du Littoral Côte d'Opale, UR 2597, LMPA, Laboratoire de Mathématiques Pures et Appliquées Joseph Liouville,
62100 Calais, France.}
\affil[2]{Institut Mines Télécom Nord Europe, Cité Scientifique, 59655 Villeneuve d'Ascq, France.}
\affil[3]{Universit\'e Bourgogne Europe, CNRS, IMB UMR 5584,
F-21000 Dijon, France.}
\date{}
\begin{document}

\maketitle

\begin{abstract}
We prove the existence of infinite-volume IDLA forests in $\Z^d$, with $d\geq 2$, based on a multi-source IDLA protocol. Unlike IDLA aggregates, the laws of the IDLA forests studied here depend on the trajectories of particles, and thus do not satisfy the famous \textit{Abelian property}. Their existence is due to a stabilization result (Theorem~\ref{thm: forest stabilization}, our main result) that we establish using percolation tools. Although the sources are infinitely many, we also prove that each of them play the same role in the building procedure, which results in an ergodicity property for the IDLA forests (Theorem~\ref{thm: forest properties}).
\end{abstract}

\noindent\textbf{Keywords:} Random walks, growth model, percolation, stabilization, multiscale argument.\\

\noindent\textbf{Mathematics Subject Classification:} 60K35, 82C24, 82B41.

%%%%%%%%%%%%%%%%%%%%
%%                %%
%%     INTRO      %%
%%                %%
%%%%%%%%%%%%%%%%%%%%

\section{Introduction}
\label{sec: intro bis}

The Internal Diffusion Limited Aggregation (\textsc{idla}) model gives a protocol to build random aggregates $(A_n)_{n\geq 0}$ recursively in $\mathbb{Z}^d$. Initially, we assume that $A_0 = \emptyset$. Then, at some step $n\geq 1$, given $A_{n-1}$, the first site visited outside of $A_{n-1}$ by a random walk started at the origin is added to $A_{n-1}$ in order to obtain $A_n$. In this context, such random walks are called \textit{particles}. \textsc{idla} was initially introduced by Meakin and Deutch in \cite{meakin1986formation} to model an industrial chemical technique known as electropolishing. The goal of such a process is to eliminate a small coat of material off of a metallic surface in order to make it smoother. It became pertinent to quantify how smooth the surface of a polished metal could get through such a process.

A first shape theorem was established by Lawler, Bramson and Griffeath in \cite{lawler1992internal} to describe the asymptotic shape of $A_n$, as $n$ tends to infinity, as a Euclidean ball. This result was later made sharper with the works of Jerison, Levine and Sheffield \cite{jerison2012logarithmic, JLS13, JLS14} and Asselah and Gaudilli\`ere \cite{asselah2013logarithmic, asselah2013sublogarithmic, AG14, asselah2019outer}. In their works, the bounds for fluctuations around the limit shape are improved from linear to logarithmic in dimension $d=2$ and sublogarithmic in dimensions $d\geq 3$. See also \cite{GQ} for a continuous time version. From then on, variants of the \textsc{idla} model have been studied on  many other graphs, such as on cylinder graphs in \cite{JLS14b, LS, S19}, on supercritical percolation clusters in \cite{DLYY, Shellef}, on comb lattices in \cite{AR16, HS12} or on non-amenable graphs in \cite{BlB07,H08}. 

While the previously cited \textsc{idla} models are single-source, multi-source \textsc{idla} models have also been considered in \cite{Darrow23, Darrow24, LP10} and by the authors in \cite{chenavier2024idla}. This question was originally investigated by Diaconis and Fulton \cite{diaconis1991growth} in the context of the \textit{smash sum} of two domains in which they discover the famous \textit{Abelian Property} of \textsc{idla} aggregates, meaning that modifying the order in which particles are launched does not change the distribution of the final aggregate. This beautiful property will be a powerful tool for the study of \textsc{idla} models.

In the present paper, we introduce new random graphs on $\mathbb{Z}^d$ with $d\geq 2$, called \textit{\textsc{idla} forests}, whose construction is based on a multi-source \textsc{idla} protocol. We consider an infinite set of sources, namely the hyperplane $\mathcal{H} := \{0\} \times \mathbb{Z}^{d-1}$. Basically, in addition to the site at which the current particle exits the aggregate and stops, we also retain the edge by which the particle reaches that site. This procedure leads to a random forest on the lattice $\mathbb{Z}^d$, i.e.\@\xspace a collection of disjoint random trees rooted at sources of $\mathcal{H}$. Unlike \textsc{idla} aggregates, trajectories of particles really matter for the \textsc{idla} forests, meaning the Abelian property is no longer true for this new model, making it more difficult to show the existence of these forests.

However, we prove in Theorem~\ref{thm: forest stabilization} (our main result) the existence of the infinite-volume \textsc{idla} forests, generated by the infinite set of sources $\mathcal{H}$. See Figure~\ref{fig: adagg simu} for a simulation in dimension $d=2$. Moreover, our construction does not favor any source in the sense that (roughly speaking), at any time, the next source to emit a particle is selected `uniformly' among all the sources of $\mathcal{H}$. This remarkable feature is stated in Theorem~\ref{thm: forest properties} in which we prove that the \textsc{idla} forests are ergodic w.r.t. the translations of $\mathcal{H}$.

Let us notice that the existence of bi-dimensional \textsc{idla} forests has been already explored by the authors in \cite{chenavier2023bi} but their proof strongly used the one-dimensional aspect of the set of sources--which is $\{0\} \times \mathbb{Z}$ when $d = 2$--and then completely collapses in higher dimensions. More than a generalization of \cite{chenavier2023bi}, we think that the method developed here and based on percolation tools is original in the context of \textsc{idla} and is certainly promising to deal with graphs built from \textsc{idla} protocols with infinitely many sources.

\subsection*{Construction of the finite-volume IDLA forests}

Let us start by describing our random inputs. Let us first consider a family of i.i.d. Poisson Point Processes (\textsc{ppp}) on $\R_+$, with intensity $1$ and denoted by $\{\mathcal{N}_z\}_{z\in\mathcal{H}}$. Each \textsc{ppp} $\mathcal{N}_z$ provides a sequence $(\tau_{z,j})_{j \geq 1}$ of successive tops which act as random clocks for the emission of particles from the source $z$. Thus, to the sequence $(\tau_{z,j})_{j \geq 1}$, is associated a sequence of independent simple random walks $(S_{z,j})_{j \geq 1}$ on $\mathbb{Z}^d$ starting at $z$. Note that the $S_{z,j}$'s, for $z \in \mathcal{H}$ and $j \geq 1$, are independent from each other and also independent from the \textsc{ppp} $\mathcal{N}_z$'s. Hence, at time $\tau_{z,j}$, a particle is emitted from the source $z$ (precisely, the $j$-th one coming from $z$), and follows the trajectory given by the random walk $S_{z,j}$ until exiting the current aggregate. To avoid having multiple particles alive at the same time, we assume that particles realize their trajectories instantaneously (w.r.t. the Poisson clocks).

Let $M, n \geq 0$ be integers. Set $\mathcal{H}_M := \{0\} \times \llbracket -M, M \rrbracket^{d-1}$. Let us consider the random set $A^{\dagger}_n[M] \subset \mathbb{Z}^d$ defined as the \textsc{idla} aggregate generated by particles emitted from the sources of $\mathcal{H}_M$ and during the time interval $[0,n]$. Since the number of particles involved in $A^{\dagger}_n[M]$ is a.s.\@\xspace finite ($n(2M+1)^{d-1}$ in mean), this aggregate is a.s.\@\xspace well defined. See the left hand side of Figure~\ref{fig: adagg simu} for a simulation of $A^{\dagger}_{20}[50]$ in dimension $d=3$.

Now, we can build quite naturally from $A^{\dagger}_n[M]$ a finite-volume \textsc{idla} forest $\fdag{n}[M]$, simply by considering the edges of $\mathbb{Z}^d$ from which the particles involved in $A^{\dagger}_n[M]$ exit the current aggregate. Let $\kappa := \sum_{z \in \mathcal{H}_M} \#\mathcal{N}_z([0,n])$ be the total number of such particles. Let us enumerate them according to their starting times, say $0 < \tau_1 < \tau_ 2 < ... < \tau_{\kappa} < n$ (they are a.s.\@\xspace different). For $j \in \llbracket 1, \kappa \rrbracket$, we denote by $A[j]$ the aggregate obtained until time $\tau_j$, including the site added by the particle sent at time $\tau_j$. We then have $A[0] = \emptyset$ (with $\tau_0=0$) and $A[\kappa] = A^{\dagger}_n[M]$. We proceed by induction to build the associated forest $\fdag{n}[M]$. Let us first set $\mathcal{F}_n[M, 0] = (\emptyset, \emptyset)$. Now, for $j \in \llbracket 1, \kappa \rrbracket$, given the random graph $\fdag{n}[M,j-1] = (V_{j-1} , E_{j-1})$, we define $\fdag{n}[M,j] = (V_j,\ E_j)$ as follows. Let $x$ be the new site added to $A[j-1]$ by the $j$-th particle.
\begin{itemize}
\item[$\bullet$] If $x$ is the source from which the $j$-th particle is emitted, then this particle actually is the first one emitted from $x$, and $x$ will be the root of a new tree in the graph. We set $V_j = V_{j-1} \cup \{x\}$ and $E_j = E_{j-1}$.
\item[$\bullet$] Otherwise, let $x'$ be the last site of $A[j-1]$ visited by the $j$-th particle before reaching $x$. Then we set $V_j = V_{j-1} \cup \{x\}$ and $E_j = E_{j-1} \cup \{(x',x)\}$.
\end{itemize}
Finally, we define $\fdag{n}[M] := \mathcal{F}_n[M,\kappa]$. See the right hand side of Figure~\ref{fig: adagg simu} for a simulation of $\fdag{30}[20]$ in dimension $d=2$.
This construction ensures that $\mathcal{F}_n[M]$ is a finite union of trees with roots in $\mathcal{H}_M$ and whose vertex set is equal to $A^{\dagger}_n[M]$.

\begin{figure}[!h]
\centering
\begin{tabular}{cc}
	\includegraphics[width=5cm,height=5cm]{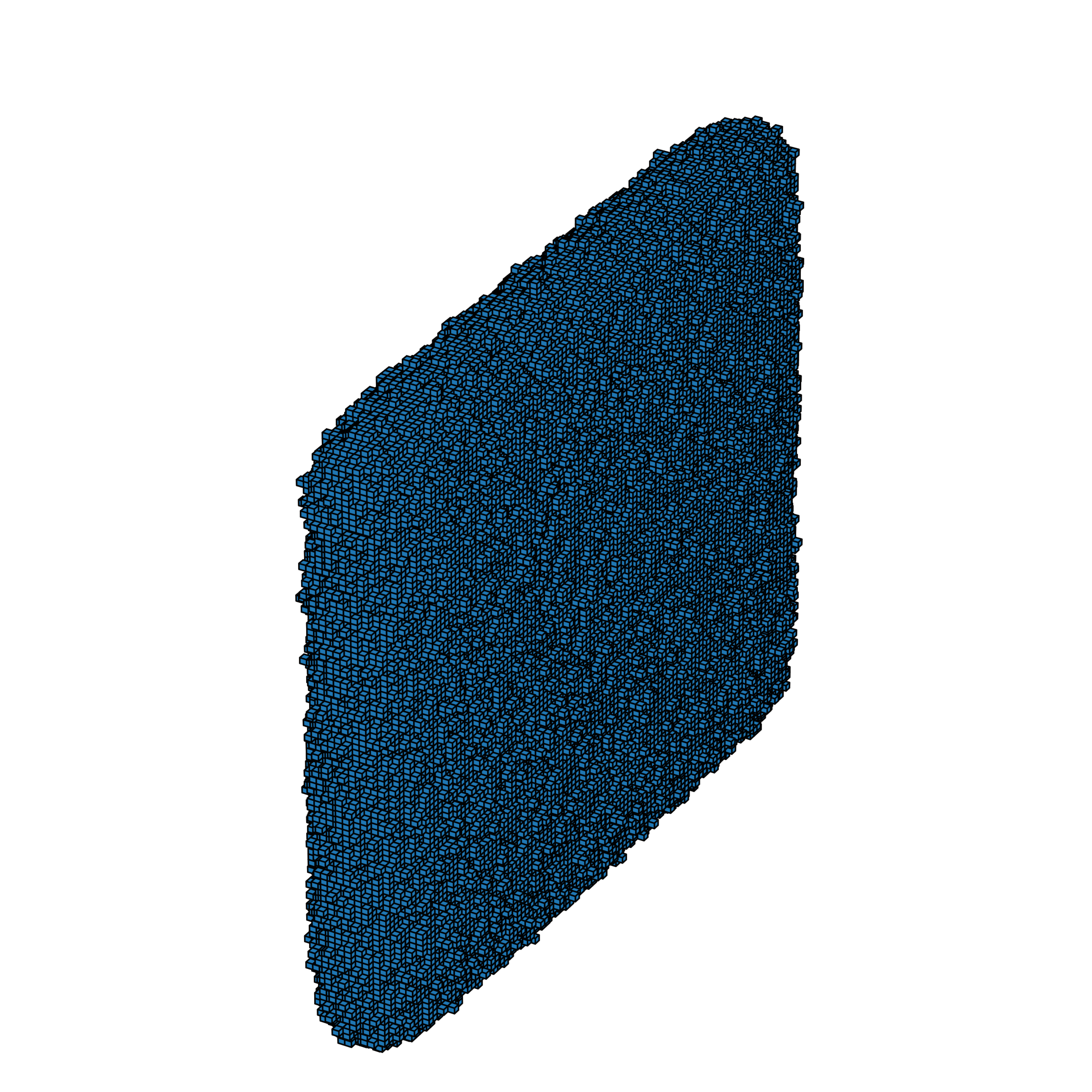} & \includegraphics[width=6.5cm,height=6cm]{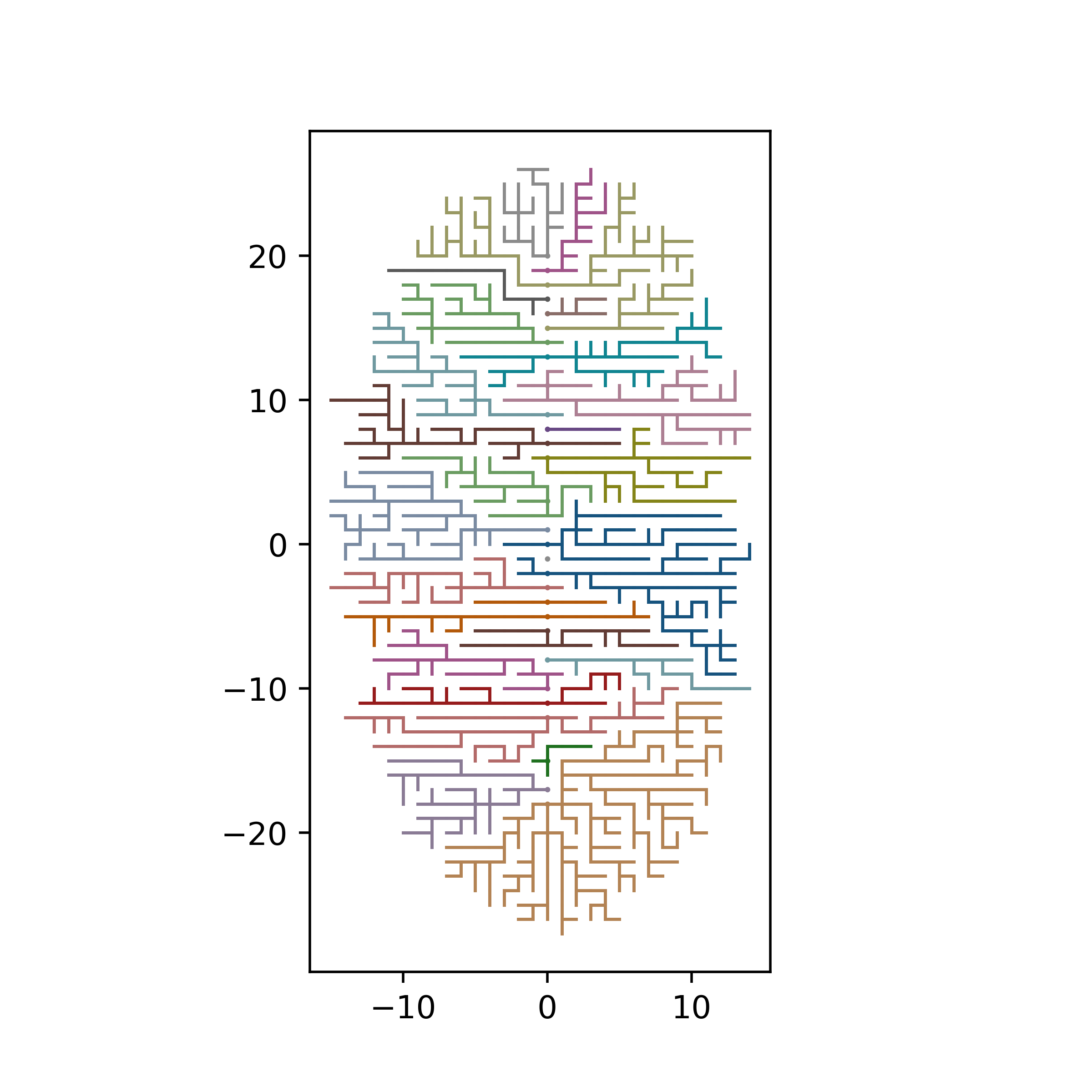}
\end{tabular}
\caption{To the left: A realization in dimension $d=3$ of the multi-source aggregate $A^{\dagger}_{20}[50]$ with particles emitted from $\mathcal{H}_{50}$ and on the time interval $[0,20]$, looking like a `bar of soap'. Each settled particle is represented by a blue cube. To the right: A realization in dimension $d=2$ of the finite-volume \textsc{idla} forest $\fdag{30}[20]$ associated to the aggregate $A^{\dagger}_{30}[20]$. Each tree is represented in a different color. Unfortunately, for visual reasons, we will only represent \textsc{idla} forests in dimension $d=2$.}
\label{fig: adagg simu}
\end{figure}

\subsection*{No monotonicity because of chains of changes}

Thanks to the \textit{natural coupling} defined in Section~\ref{subsec: couplings}, one can construct on the same probability space the aggregates $A^{\dagger}_n[M]$, for all $M\geq 0$, in such a way that a.s.\@\xspace $A^{\dagger}_n[M] \subset A^{\dagger}_n[M+1]$ for any $M$. This monotonicity property allows us to a.s.\@\xspace define a limiting aggregate as
\begin{equation}
\label{Adag(n,infty)}
A^{\dagger}_n[\infty] := \bigcup_{M \geq 0} \uparrow A^{\dagger}_n[M] ~.
\end{equation}
However the same monotonicity property does not hold for the sequence $(\fdag{n}[M])_{M\geq 0}$ of associated \textsc{idla} forests, as depicted in Figure~\ref{fig: forest simu}. Consequently, we cannot define an infinite-volume forest as the increasing union of finite-volume forests, in the same spirit as \eqref{Adag(n,infty)}. Let $M'\geq M \geq 0$. Although their vertex sets satisfy the inclusion $V(\fdag{n}[M]) = A^{\dagger}_n[M] \subset V(\fdag{n}[M']) = A^{\dagger}_n[M']$ (thanks to the natural coupling), this is no longer true for their edge sets. Indeed, some vertices present in both forests $\fdag{n}[M]$ and $\fdag{n}[M']$ are reached using different particles and possibly through different edges. This contributes to an edge in $\fdag{n}[M]$ which is not present in $\fdag{n}[M']$ (and conversely). These discrepancies between $\fdag{n}[M]$ and $\fdag{n}[M']$ can occur through a tricky phenomenon called \textit{chains of changes} that we detail now. In a first time, the reader may skip this part and go directly to the result section below.
\begin{figure}[htbp]
\centering
\begin{minipage}{0.45\textwidth}
	\centering
	% Long image on the left
	\includegraphics[width=0.5\textwidth]{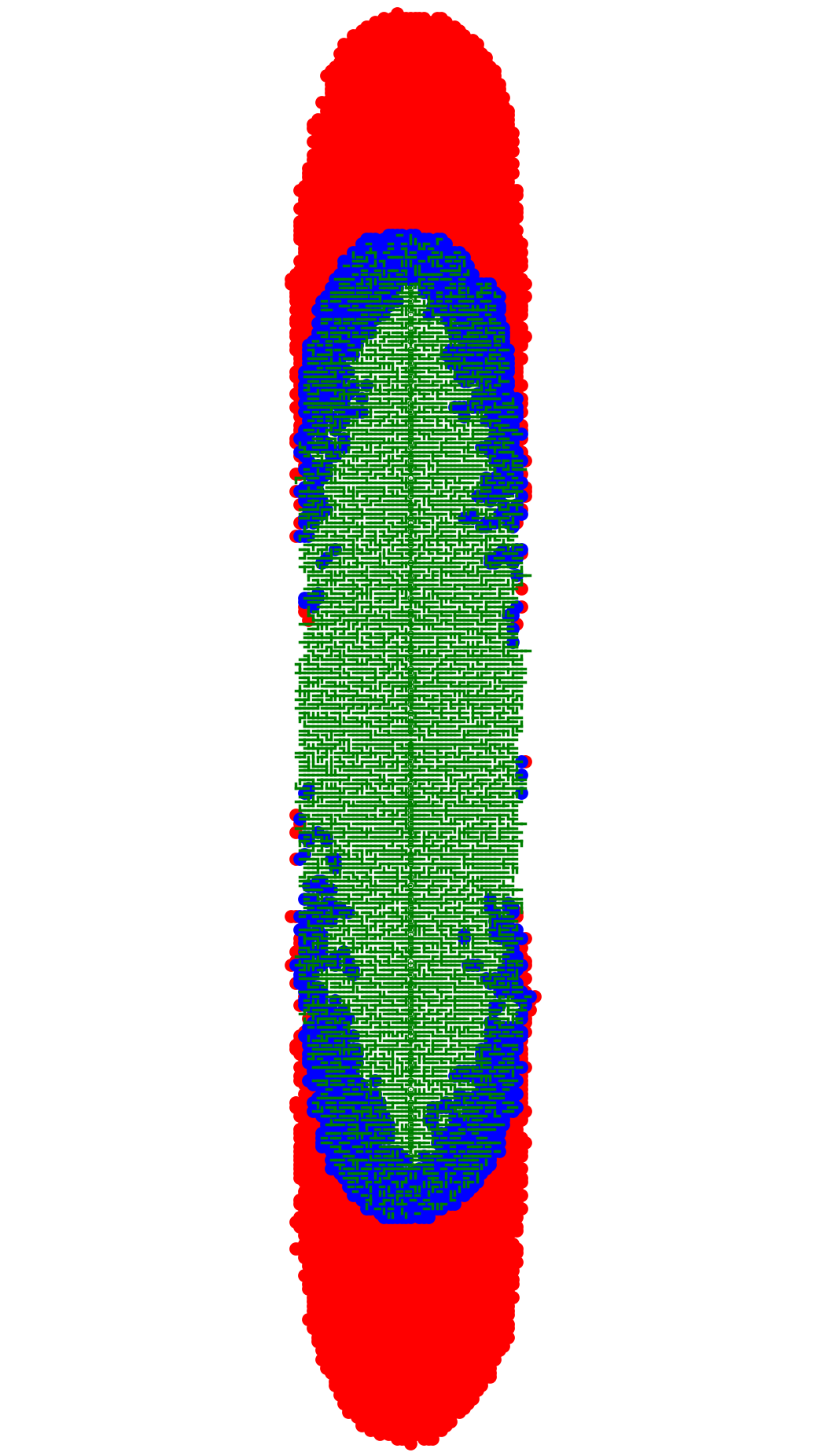}
\end{minipage}%
\hspace{-20mm} % Adjust the space here (negative value will bring it closer)
\begin{minipage}{0.45\textwidth}
	\centering
	\includegraphics[width=0.6\textwidth]{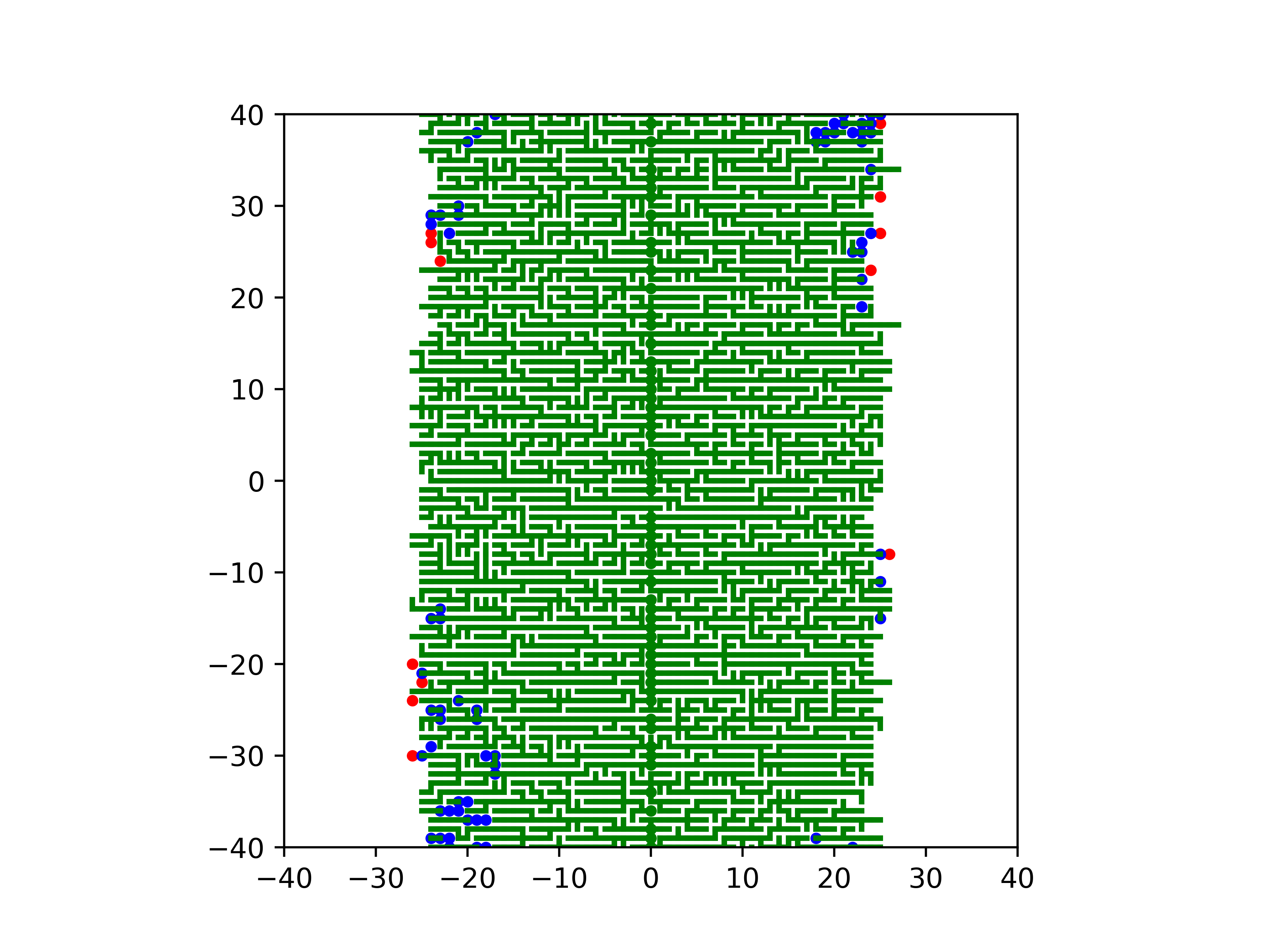} \\[1ex]
	\includegraphics[width=0.6\textwidth]{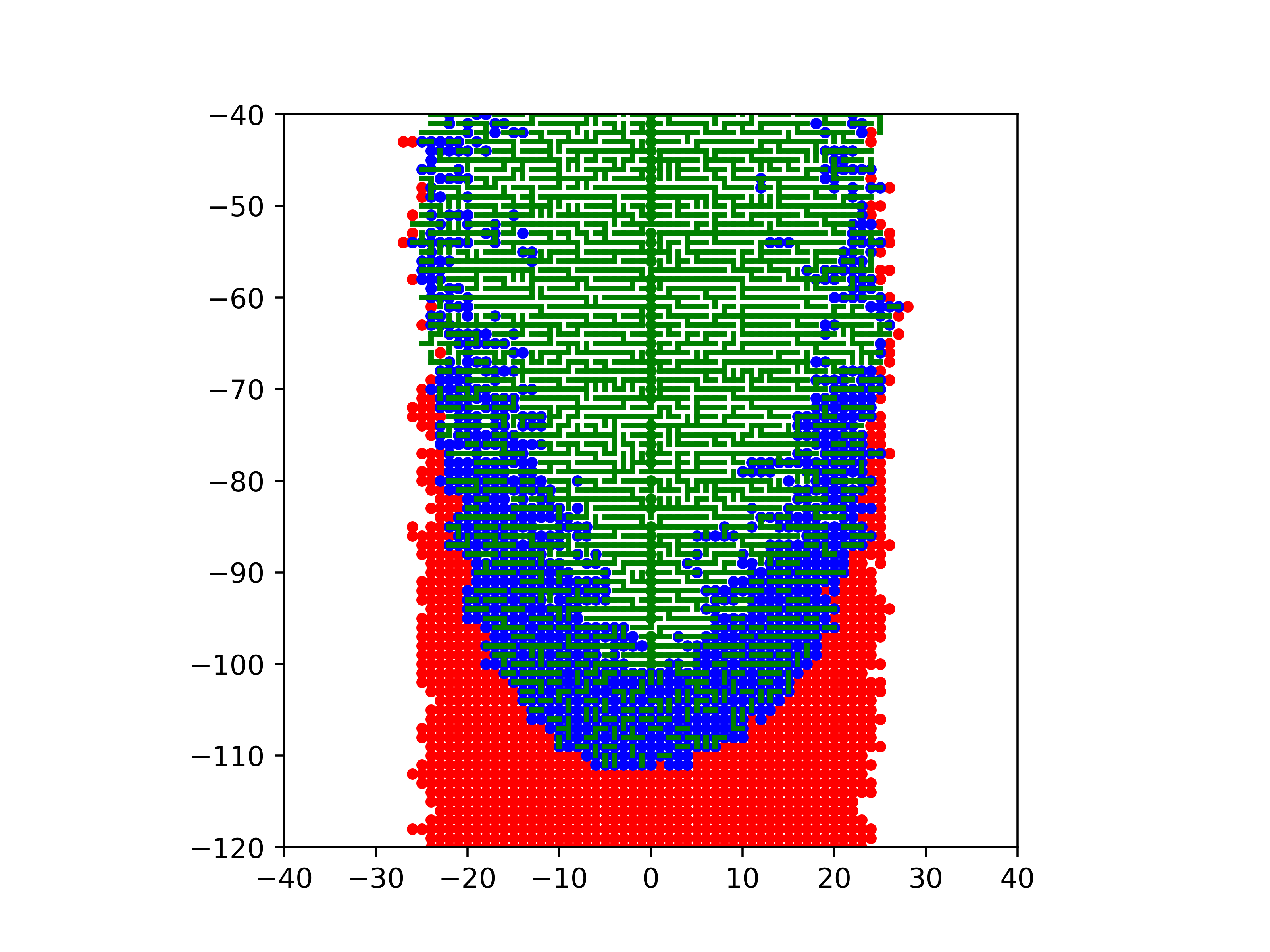}
\end{minipage}
\caption{A realization of $\fdag{50}[100]$ and $\fdag{50}[150]$ using the natural coupling, i.e.\@\xspace their vertex sets are such that $A^{\dagger}_{50}[100] \subset A^{\dagger}_{50}[150]$. The green edges are common to both forests. The blue points are vertices common to both aggregates, but reached by different particles, and may lead to discrepancies between $\fdag{50}[100]$ and $\fdag{50}[150]$. The red points are vertices of $A^{\dagger}_{50}[150] \setminus A^{\dagger}_{50}[100]$. Both pictures on the right are zooms of the one on the left. One can see the presence of many blue points, especially on both extremities of $A^{\dagger}_{50}[100]$. Remark also that some of them appear in the vicinity of the $e_1$-axis ($e_1$ denoting the first vector of the canonical basis): see the top right picture. These possible discrepancies are produced by chains of changes.}
\label{fig: forest simu}
\end{figure}

Let $M' \geq M \geq 0$ and $n \geq 0$. To explain what a chain of changes is, we need to slightly describe the natural coupling we use. The basic idea of that coupling consists in using the same random clocks $(\mathcal{N}_z)$ and the same random walks $(S_{z,j})$ for both aggregates $A^{\dagger}_n[M]$ and $A^{\dagger}_n[M']$. Hence a particle starting from a source in $\mathcal{H}_M$ will work for both aggregates, i.e.\@\xspace it will add a new site to both aggregates (but not necessarily the same site). However, a particle starting from a source in $\mathcal{H}_{M'} \setminus \mathcal{H}_M$ will only work for the larger aggregate $A^{\dagger}_n[M']$. Now, consider such a particle starting at time $t_1 \in (0,n)$ from a source in $\mathcal{H}_{M'} \setminus \mathcal{H}_M$, it adds a site $x_1$ to the larger aggregate. Precisely, if we write $A_{t_1^-}^{\dagger}[M']$ the current aggregate produced right before sending that particle, then we get $A_{t_1}^{\dagger}[M'] = A_{t_1^-}^{\dagger}[M'] \cup \{x_1\}$, while $A_{t_1}^{\dagger}[M] = A_{t_1^-}^{\dagger}[M]$ remains unchanged. At this time, $x_1$ belongs to the larger aggregate $A_{t_1}^{\dagger}[M']$ but not to the smaller one $A_{t_1}^{\dagger}[M]$. If no other future particles starting from $\mathcal{H}_{M}$ visit the site $x_1$, then $x_1$ will remain a discrepancy until time $n$ between both aggregates. Conversely, assume $x_1$ is visited at time $t_2 \in (t_1,n)$ (and for the first time) by a particle coming from $\mathcal{H}_{M}$, then both aggregates are updated as follows:
\begin{itemize}
\item[$\bullet$] The site $x_1$ is added to the smaller aggregate: $A_{t_2}^{\dagger}[M] = A_{t_2^-}^{\dagger}[M] \cup \{x_1\}$.
\item[$\bullet$] Since $x_1$ already belongs to $A_{t_2^-}^{\dagger}[M']$, the particle continues its trajectory until exiting the larger aggregate, say through a new site $x_2$, that is then added: $A_{t_2}^{\dagger}[M'] = A_{t_2^-}^{\dagger}[M'] \cup \{x_2\}$.
\end{itemize}
At time $t_2$, the site $x_1$ is no longer a discrepancy between both aggregates but it has been reached by two different particles: $x_1$ actually is a blue point using the color code of Figure~\ref{fig: forest simu}, i.e.\@\xspace both edges leading to $x_1$ in $\fdag{n}[M]$ and $\fdag{n}[M']$ could be different. Moreover, the site $x_2$ has become a discrepancy between both aggregates $A_{t_2}^{\dagger}[M]$ and $A_{t_2}^{\dagger}[M']$. In other words, the discrepancy has been \textit{relayed} from $x_1$ to $x_2$ by the particle emitted at time $t_2$.

From then on, one can imagine a scenario where, at a random time $t_3 \in (t_2,n)$, the discrepancy at $x_2$ is relayed (by a third particle from $\mathcal{H}_M$) to a new site $x_3$ and possibly becomes another difference between both forests (i.e.\@\xspace a blue point), and so on. Such a phenomenon is referred to as a \textit{chain of changes}. We point out that, even if it is initiated by a particle from $\mathcal{H}_{M'} \setminus \mathcal{H}_M$--i.e.\@\xspace quite far away from the $e_1$-axis when $M$ is large--a chain of changes can spread up to the $e_1$-axis thanks to several relays. See the blue points in the top right picture of Figure~\ref{fig: forest simu}.

\subsection*{Main results}

Our main task consists in controlling the chains of changes described in the previous section and proving that they cannot spread up too much inside the aggregate. This leads to the following stabilization result for the sequence of \textsc{idla} forests $(\fdag{n}[M])_{M \geq 0}$. For that purpose, we need to define the strip $\mathbb{Z}_K := \mathbb{Z} \times \llbracket -K, K \rrbracket^{d-1}$ for any integer $K\geq 0$.

\begin{theorem}[Forest stabilization result]
\label{thm: forest stabilization}
Let $d \geq 2$. For all $n \geq 1$ and $K \geq 1$, the following holds with probability one: 
\begin{equation}
	\label{Stabilization}
	\exists N_0 = N_0(n,K) \geq 0 , \, \forall N \geq N_0 , \, \fdag{n}[N] \cap \mathbb{Z}_K = \fdag{n}[N_0] \cap \mathbb{Z}_K ~,
\end{equation}
where the above identity means that all vertices and edges of $\fdag{n}[N]$ and $\fdag{n}[N_0]$ inside the strip $\mathbb{Z}_K$ coincide.
\end{theorem}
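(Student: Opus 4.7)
The plan is to reduce the statement, via the Borel--Cantelli lemma, to a quantitative bound on how far chains of changes can propagate as the number of sources grows. Fix $n\geq 1$ and $K\geq 1$. For each $N\geq 1$, let $\mathcal{D}_N$ denote the event that at least one chain of changes initiated by a particle emitted, before time $n$, from a source in $\mathcal{H}_{N+1}\setminus\mathcal{H}_N$ reaches the strip $\mathbb{Z}_K$. On $\mathcal{D}_N^c$, the natural coupling forces $\fdag{n}[N+1]\cap\mathbb{Z}_K=\fdag{n}[N]\cap\mathbb{Z}_K$, so establishing $\sum_{N\geq 1}\mathbb{P}(\mathcal{D}_N)<\infty$ is enough to produce the almost sure threshold $N_0=N_0(n,K)$.

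To bound $\mathbb{P}(\mathcal{D}_N)$, I would first invoke a geometric localization of $A^{\dagger}_n[\infty]$: combining the monotone coupling with shape and fluctuation estimates for multi-source \textsc{idla}, one shows that, with overwhelming probability, the aggregate remains confined to a slab around $\mathcal{H}$ whose thickness is a deterministic function of $n$ (presumably of order $\sqrt{\log n}$, matching the paper's macro $\Rk$). On this event, any chain of changes is confined to the slab, its starting point lies at distance at least $N$ from the $e_1$-axis inside $\mathcal{H}$, and each relay must occur at a site already belonging to the current aggregate.

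The heart of the argument is then a multiscale percolation comparison. I would partition the relevant slab into mesoscopic blocks $(B_\alpha)$ of a common side length $L$, and declare $B_\alpha$ to be \emph{good} when (i) the \textsc{idla} aggregate inside $B_\alpha$ behaves as predicted by the shape theorem, and (ii) every simple random walk entering $B_\alpha$ through its boundary exits it with a quantitatively controlled exit law within a bounded number of steps. Under these two local events, a chain of changes entering $B_\alpha$ can only advance by $\mathcal{O}(L)$ sites before leaving it, so the propagation of a chain over a distance $N-K$ forces a path of at least $c(N-K)/L$ bad blocks to join $\{|y_1|>N\}$ to $\mathbb{Z}_K$. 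Since the random walks $(S_{z,j})$ are independent across $(z,j)$ and the events defining badness are local in space and time, the collection of bad blocks can be dominated by a subcritical Bernoulli percolation on a coarse-grained lattice; a Peierls-type contour estimate then yields an exponential decay in $N-K$ for the probability of such a crossing, hence a summable bound on $\mathbb{P}(\mathcal{D}_N)$.

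The main obstacle I anticipate is the non-Markovian, history-dependent nature of chains of changes: whether a given relay at $x_i$ produces a new discrepancy $x_{i+1}$ depends on the full past of the two aggregates $A^{\dagger}_n[N]$ and $A^{\dagger}_n[N+1]$, and hence on random walks used elsewhere. The multiscale decomposition must therefore be set up so that the goodness of disjoint blocks depends on disjoint families $(S_{z,j})$, which forces a careful conditioning on the \textsc{ppp} clocks and on the shape event, and an absorption of residual dependencies into the definition of a good block. A secondary technical difficulty is calibrating the block size $L$---likely slowly growing in $n$---so that the renormalized percolation is strictly subcritical while the deterministic geometric bounds from the shape theorem remain sharp enough to keep the combinatorial counting of paths manageable.
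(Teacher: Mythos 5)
Your Borel--Cantelli reduction (indexing by the level of the initiating source) is legitimate in principle, and it would in fact give something stronger than the theorem; but the central step of your argument has a genuine gap. You claim that a chain of changes crossing from level $N$ to $\mathbb{Z}_K$ must pass through order $(N-K)/L$ \emph{bad} blocks, where goodness means the aggregate is shape-typical and entering walks exit with controlled exit laws. Neither condition is violated by a relay: a relay only requires that some later particle from the smaller source set hits the current discrepancy before exiting the smaller aggregate and then settles one site further in the larger one, and this happens at perfectly typical configurations of the aggregate and of the walks. Over the time horizon $[0,n]$ the density of emitting sources is $1-e^{-n}$, close to $1$ for large $n$ (the paper even notes that the set of rooted sources percolates in $\mathcal{H}$), and the natural Boolean model of ``relay balls'' attached to all these sources has no reason to be subcritical. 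So your coarse-grained comparison with a subcritical Bernoulli field, and hence the Peierls bound giving exponential decay of $\mathbb{P}(\mathcal{D}_N)$ in $N-K$, does not follow from the local events you define; you never identify a locally \emph{unlikely} event whose occurrence is forced by the passage of a chain. This is exactly the difficulty the paper flags when it rejects the direct union-bound/Borel--Cantelli route: no summable, $N$-decreasing bound on chain propagation is available.

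The missing idea is the strict time-monotonicity of the relay times $t_1<t_2<\dots$, which your proposal never uses. The paper argues by contradiction: if stabilization failed, chains would produce an infinite \emph{descending} chain in a space--time Boolean model $\Sigma$ (with radii controlled via the global upper bound, Proposition~\ref{thm: global upper bound}, and a finite-degree lemma), and the monotone time structure forces this infinite chain to appear within an arbitrarily short time window $[0,\varepsilon]$. Only after this ``instantaneous percolation'' step does one obtain a Boolean model $\hat{\Sigma}_\varepsilon$ whose intensity $p_\varepsilon=1-e^{-\varepsilon}$ is genuinely small, to which the Gou\'er\'e-type multiscale argument applies --- and even there, independence between scales is not obtained by assigning disjoint walk families to disjoint blocks (a single trajectory crosses many blocks, and the aggregate in a window depends on far-away particles), but via the aggregate stabilization Theorem~\ref{thm: stab agg dagg} and localized radii $R^{\text{loc}}$. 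Two further inaccuracies in your setup: the infinite aggregate is not confined, with overwhelming probability, to a slab of thickness depending only on $n$ --- over the infinite hyperplane such a bound fails almost surely, and the correct statement is confinement in the widening cone $\mathscr{C}^{\alpha}_{\varepsilon}$ with polynomial error on finite windows; and ``absorbing residual dependencies into the definition of a good block'' is precisely the point that needs a proof, not a remark. As it stands, the proposal does not yield the theorem.
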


From then on, Theorem~\ref{thm: forest stabilization} allows us to take the limit $M \to \infty$ in the sequence $(\fdag{n}[M])_{M \geq 0}$ in order to obtain an infinite-volume forest $\mathcal{F}_n$. First remark that the sequence $(N_0(n,K))_{n,K}$ in (\ref{thm: forest stabilization}) can be chosen increasing in $K$, which implies for any $K' \geq K$,
\begin{equation}
\label{IncreasingFn}
\mathcal{F}_n[N_0(n,K)] \cap \mathbb{Z}_K = \mathcal{F}_n[N_0(n,K')] \cap \mathbb{Z}_K \subset \mathcal{F}_n[N_0(n,K')] \cap \mathbb{Z}_{K'} ~.
\end{equation}
Inclusion (\ref{IncreasingFn}) compensates for the lack of monotonicity of the sequence $(\fdag{n}[M])_{M \geq 0}$ and allows us to define the \textit{infinite-volume \textsc{idla} forest up to time $n$} denoted by $\mathcal{F}_n$ as
\begin{equation}
\label{defi:Fn}
\mathcal{F}_n := \bigcup_{K\geq 1} \uparrow \mathcal{F}_n[ N_0(n,K)] \cap \mathbb{Z}_K ~,
\end{equation}
a.s.\@\xspace and for any $n \geq 1$. A realization of $\mathcal{F}_{100}[200]$ is given in Figure~\ref{fig: adagg simu2} seen through the strip $\mathbb{Z}_{40}$.

\begin{figure}[!ht]
\centering
\includegraphics[width=8cm,height=5cm]{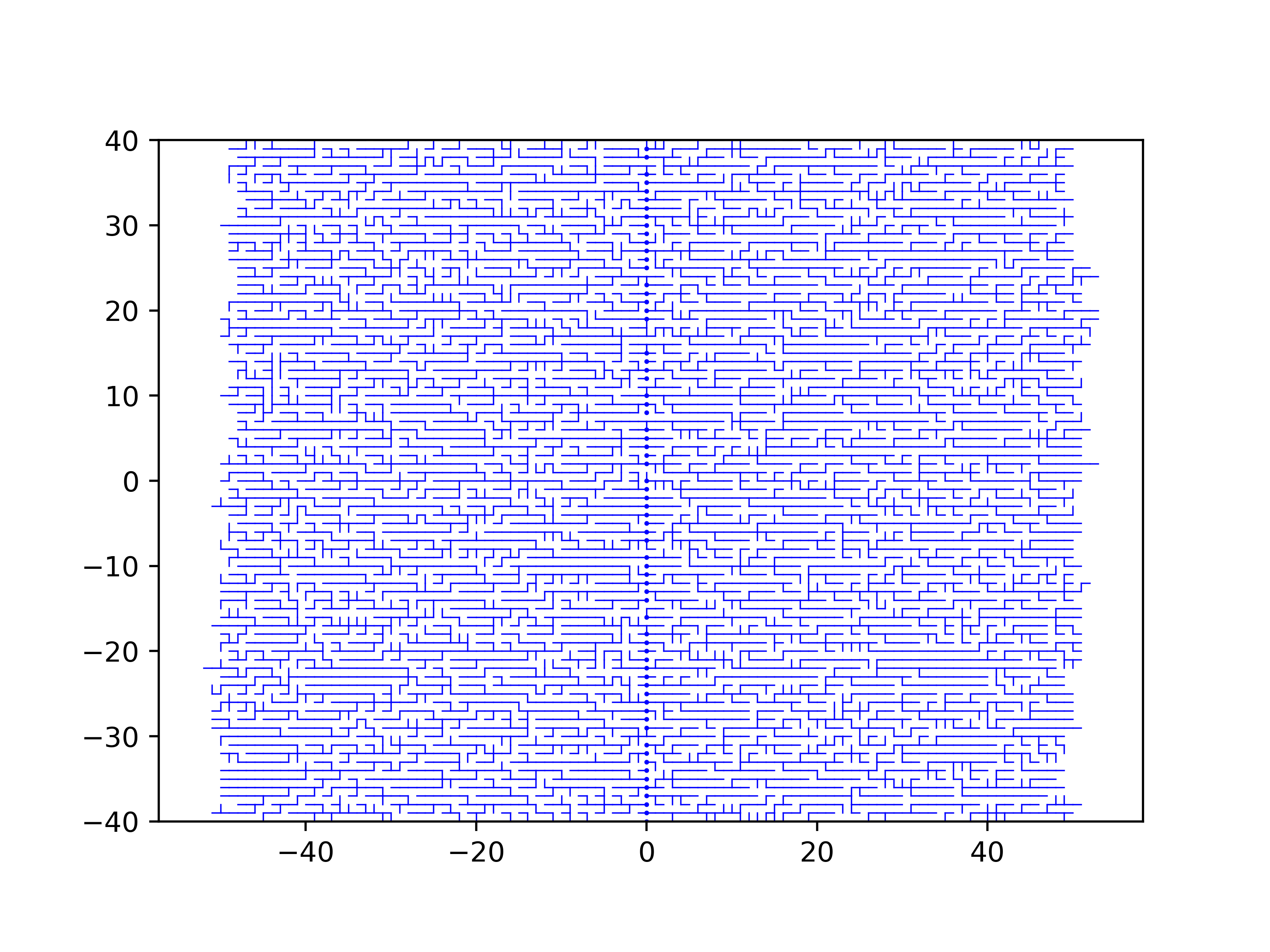}
\caption{Here is a realization of $\mathcal{F}_{100}[200]$ intersected with the strip $\mathbb{Z}_{40}$. Taking $M = 200$ large enough, one can expect the infinite-volume forest $\mathcal{F}_{100}$ and the finite volume forest $\mathcal{F}_{100}[200]$ to coincide on $\mathbb{Z}_{40}$ thanks to Theorem~\ref{thm: forest stabilization}.}
\label{fig: adagg simu2}
\end{figure}

After taking the limit $M \to \infty$ in space, let us take the limit $n \to \infty$ in time. The sequence $(N_0(n,K))_{n,K}$ can also be chosen increasing in $n$. So using once again the stabilization result of Theorem~\ref{thm: forest stabilization}, we can write:
\begin{samepage}
\begin{eqnarray*}
	\mathcal{F}_n \cap \mathbb{Z}_K = \mathcal{F}_n[N_0(n,K)] \cap \mathbb{Z}_K & = & \mathcal{F}_n[N_0(n+1,K)] \cap \mathbb{Z}_K \\
	& \subset & \mathcal{F}_{n+1}[N_0(n+1,K)] \cap \mathbb{Z}_K = \mathcal{F}_{n+1} \cap \mathbb{Z}_K,
\end{eqnarray*}
\end{samepage}
where the inclusion $\mathcal{F}_n[M] \subset \mathcal{F}_{n+1}[M]$ is merely due to extra particles emitted during the time interval $(n,n+1]$ (from $\mathcal{H}_M$). Hence, the sequence of random graphs $(\mathcal{F}_n)_{n\geq 1}$ is increasing in the sense that a.s.\@\xspace for any $n \geq 1$, $V(\fdag{n}) \subset V(\fdag{n+1})$ and $E(\fdag{n}) \subset E(\fdag{n+1})$. We then define the \textit{infinite-volume \textsc{idla} forest} $\mathcal{F}_\infty$ by 
\begin{equation}
\label{defi:Finfty}
\fdag{\infty} := \bigcup_{n\geq 1} \uparrow \fdag{n}\quad \mathrm{a.s.}
\end{equation}

The infinite-volume \textsc{idla} forests $\fdag{\infty}$ and $(\fdag{n})_{n\geq 0}$ previously defined are built from an infinite set of sources, namely the hyperplane $\mathcal{H} = \{0\} \times \mathbb{Z}^{d-1}$. Theorem~\ref{thm: forest properties} below asserts that they are invariant in distribution w.r.t. translations of $\mathcal{H}$, meaning that all sources in $\mathcal{H}$ play the same role in their constructions. This gives a mathematical sense to what was announced at the beginning about $\fdag{\infty}$ and $(\fdag{n})_{n\geq 0}$; at each time, the next source to emit a particle is chosen `uniformly' among $\mathcal{H}$.

For $k \in \mathcal{H}$, let us denote by $T_k$ the translation operator on $\mathbb{Z}^d$ defined by $\forall x \in \mathbb{Z}^d$, $T_k(x) = x+k$.

\begin{theorem}
\label{thm: forest properties}
Let $d \geq 2$. The infinite-volume \textsc{idla} forests $\fdag{\infty}$ and $(\fdag{n})_{n\geq 0}$ satisfy the following properties: 
\begin{enumerate}
	\item Almost surely, the set of vertices of $\fdag{\infty}$ satisfies $V(\fdag{\infty}) = \mathbb{Z}^d$.
	\item The distributions of $\left(\fdag{n}\right)_{n \geq 0}$ and $\fdag{\infty}$ are invariant w.r.t. translations $T_k,\ k \in \mathcal{H}$.
	\item The distributions of $\left(\fdag{n}\right)_{n \geq 0}$ and $\fdag{\infty}$ are $\alpha$-mixing, that is for all $n\geq 0$, for all events $\mathcal{A},\ \mathcal{B}$, 
	\[
	\lim_{|k| \to \infty}\mathbb{P}\left( \fdag{n} \in \mathcal{A},\ T_k\fdag{n} \in \mathcal{B}\right) = \mathbb{P}\left( \fdag{n} \in \mathcal{A} \right) \mathbb{P}\left( \fdag{n} \in \mathcal{B} \right)
	\] 
	and
	\[
	\lim_{|k| \to \infty}\mathbb{P}\left( \fdag{\infty} \in \mathcal{A},\ T_k\fdag{\infty} \in \mathcal{B}\right) = \mathbb{P}\left( \fdag{\infty} \in \mathcal{A} \right) \mathbb{P}\left( \fdag{\infty} \in \mathcal{B} \right).
	\] 
	Consequently, the distributions of $\left(\fdag{n}\right)_{n \geq 0}$ and $\fdag{\infty}$ are ergodic, w.r.t. translations $T_k,\ k \in \mathcal{H}$.
\end{enumerate}
\end{theorem}
\color{black}

\subsection*{Strategy for proving Theorem~\ref{thm: forest stabilization}}

Let us call \textit{level} $M$ the set of sources in $\mathcal{H}$ at distance $M$ from the origin (w.r.t. the infinite norm $\|(z_1,\ldots,z_d)\| := \max_i |z_i|$). Our main task consists in proving the stabilization result Theorem~\ref{thm: forest stabilization}, i.e.\@\xspace given $n \geq 1$ and $K \geq 1$, that (\ref{Stabilization}) recalled below occurs with probability $1$:
\[
\exists N_0 = N_0(n,K) \geq 0 , \, \forall N \geq N_0 , \, \fdag{n}[N] \cap \mathbb{Z}_K = \fdag{n}[N_0] \cap \mathbb{Z}_K ~.
\]
Since the event $\{\fdag{n}[N] \cap \mathbb{Z}_K \neq \fdag{n}[N_0] \cap \mathbb{Z}_K\}$ implies the existence of a chain of changes between $A^{\dagger}_n[N]$ and $A^{\dagger}_n[N_0]$, a natural approach would be, using the Borel-Cantelli Lemma combined with a union bound, to show that 
\begin{equation*}
	%\label{eqn: borel cantelli union}
	\sum_{N_0} \sum_{N \geq N_0} \mathbb{P} \big( \fdag{n}[N] \cap \mathbb{Z}_K \neq \fdag{n}[N_0] \cap \mathbb{Z}_K \big) < \infty ~.
\end{equation*}
However, it is difficult to obtain any upper bounds which decrease with respect to $N$ \textit{and} make the corresponding series summable. Indeed, this event provides no control on the level from which that chain of changes is initiated: it could be initiated anywhere from level $N_0+1$, independently of $N$. A different strategy is therefore required.

%A natural approach would be, using the Borel-Cantelli Lemma combined with a union bound, to show that 
%\begin{equation*}
%	%\label{eqn: borel cantelli union}
%	\sum_{N_0} \sum_{N \geq N_0} \mathbb{P} \big( \fdag{n}[N] \cap \mathbb{Z}_K \neq \fdag{n}[N_0] \cap \mathbb{Z}_K \big) < \infty ~.
%\end{equation*}
%Although the event $\{\fdag{n}[N] \cap \mathbb{Z}_K \neq \fdag{n}[N_0] \cap \mathbb{Z}_K\}$ implies the existence of a chain of changes between $A^{\dagger}_n[N]$ and $A^{\dagger}_n[N_0]$, it is difficult to obtain any upper bounds which decrease with respect to $N$ \textit{and} make the corresponding series summable. Indeed, this event provides no control on the level from which that chain of changes is initiated: it could be initiated anywhere from level $N_0+1$, independently of $N$. A different strategy is therefore required.

Our original approach is to interpret the chain of changes phenomenon with a percolation point of view. Consider $N \geq N_0 \geq K$ and a chain of changes between the forests $\fdag{n}[N_0]$ and $\fdag{n}[N]$ creating (at least) a discrepancy inside the strip $\mathbb{Z}_K$. Then, the sequence of successive relays will be interpreted as a sequence of successive overlapping balls, centered at the sources emitting the relay particles, whose cluster goes from level $N_0$ to the strip $\mathbb{Z}_K$. In particular, when $N_0 \gg K$, a very large cluster will correspond to this chain of changes.

Given $n,K \geq 1$, we proceed by contradiction and assume that, with positive probability,
\begin{equation}
\label{Absurd-Hypo}
\forall N_0 , \, \exists N \geq N_0 , \, \fdag{n}[N] \cap \mathbb{Z}_K \neq \fdag{n}[N_0] \cap \mathbb{Z}_K ~.
\end{equation}
Throughout the whole paper, we will refer to (\ref{Absurd-Hypo}) as the \textit{Absurd hypothesis}. According to our percolation viewpoint, (\ref{Absurd-Hypo}) leads to the existence of a Boolean model, say $\Sigma$, which percolates (i.e.\@\xspace admits an unbounded cluster) with positive probability. To get a contradiction, we will also state that $\Sigma$ is actually subcritical with probability $1$, concluding the proof of Theorem~\ref{thm: forest stabilization}. To do it, we will act on three characteristics of the Boolean model $\Sigma$: its intensity (the density of its centers), its radii distribution and its long-range correlations.

First, the relay particles involved in a chain of changes are emitted by space-time points $(z,t)$, i.e.\@\xspace from a source $z$ and at time $t$. Denoting by $(z_i,t_i)$'s the sequence of emitting space-time points of a given chain of changes, the time sequence $(t_i)$ is increasing by construction. Taking advantage of this monotonicity property, we prove that the intensity of the Boolean model $\Sigma$ can be chosen as small as we want.

Now, recall that the (infinite) aggregate $A^{\dagger}_n[\infty]$ defined in (\ref{Adag(n,infty)}) is also the vertex set of the \textsc{idla} forest $\fdag{n}$. So any information about it will help us to analyze the chain of changes phenomenon. In particular, in the Boolean model $\Sigma$, the radius of a ball associated to a relay particle is given by the maximal fluctuations performed by that particle from its source to exiting $A^{\dagger}_n[\infty]$. So stating a global upper bound (see Proposition \ref{thm: global upper bound}) for this infinite aggregate--within a kind of cone--will allow us to control these fluctuations and prove that the radii distribution of $\Sigma$ satisfies good integrability conditions.

Finally, in order to show that $\Sigma$ is subcritical, we will apply a multiscale argument in the manner of \cite{gouere2009subcritical}. A crucial ingredient making this strategy successful is to quantify how much $\Sigma$, when restricted to a finite window, depends on what happens far away. An important step towards such a local property satisfied by the Boolean model $\Sigma$ lies in the stabilization result below. Theorem~\ref{thm: stab agg dagg}, interesting in itself, asserts that with high probability, the infinite aggregate $A^{\dagger}_n[\infty]$, when restricted to the strip $\mathbb{Z}_M$, does not depend on particles launched from levels larger than $2M$.

\begin{theorem}[Aggregate stabilization result]
\label{thm: stab agg dagg}
Let $d \geq 2$ and $n\geq 1$. For any $L \geq 1$, there exists a positive constant $C = C(n,d,L)$ such that for any $M \geq 1$, 
\[
\mathbb{P} \big( A^{\dagger}_n[\infty] \cap \mathbb{Z}_M = A^{\dagger}_n[2M] \cap \mathbb{Z}_M \big) \geq 1 - \frac{C}{M^L} ~.
\]
\end{theorem}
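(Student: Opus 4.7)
By the natural coupling of Section~\ref{subsec: couplings}, we have $A^{\dagger}_n[2M] \subset A^{\dagger}_n[\infty]$ almost surely, so the complement of the event of interest is exactly the existence of some site of $\mathbb{Z}_M$ added to $A^{\dagger}_n[\infty]$ thanks (directly or through a chain of changes) to particles emitted from $\mathcal{H} \setminus \mathcal{H}_{2M}$. Expanding on the description of chains of changes given in the Introduction, any such discrepancy $x \in \mathbb{Z}_M$ is the last term of a finite sequence $z_0, x_1, z_1, x_2, \ldots, z_{k-1}, x_k = x$ in which $z_0 \in \mathcal{H} \setminus \mathcal{H}_{2M}$ is the initiating source, the relay sources satisfy $z_i \in \mathcal{H}_{2M}$ for $i\geq 1$, and each jump corresponds to the random walk emitted from $z_i$ travelling to the previous discrepancy $x_i$ and then continuing until exiting the current aggregate at $x_{i+1}$.

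The strategy is to combine the global upper bound on the infinite aggregate (Proposition~\ref{thm: global upper bound}) with a union bound over all admissible chains. The global upper bound should yield that each jump length $\|x_{i+1} - z_i\|$ is controlled by the local extent of $A^{\dagger}_n[\infty]$ around $z_i$, which admits super-polynomial tails of the form $\mathbb{P}(\|x_{i+1} - z_i\| \geq r) \leq C_L r^{-L}$ for any prescribed $L$. Since the chain joins $\|z_0\|_\infty > 2M$ to $\mathbb{Z}_M$, the total travel distance $\|x_1 - z_0\| + \sum_{i=1}^{k-1}(\|x_i - z_i\| + \|z_i - x_{i+1}\|)$ must exceed $M$, so a chain with $k$ relays exhibits at least one jump of length $\geq M/(2k+1)$. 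Summing over the initiating source, the chain length $k$, the sequence of relay sources in $\mathcal{H}_{2M}$ and the intermediate discrepancy sites, and using the super-polynomial jump tails together with a shell decomposition of $\mathbb{Z}^d$ (so that the polynomial combinatorial cost at scale $r$ is absorbed by the tail $C_L r^{-L}$ for $L$ chosen large enough), one arrives at the claimed bound $C / M^L$.

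The principal obstacle is the combinatorial explosion of chain configurations, compounded by the mild correlations between successive relay particles through the shared underlying aggregate. A crucial simplification comes from the strict monotonicity in time of the successive emissions $t_0 < t_1 < \cdots < t_k$ in a chain, together with the independence built into the inputs $\{\mathcal{N}_z\}_{z \in \mathcal{H}}$ and $\{S_{z,j}\}_{z,j}$: once a candidate skeleton of the chain is fixed, one can condition sequentially on the past and invoke Proposition~\ref{thm: global upper bound} in a decoupled manner to control each jump. Taking $L$ large enough in the global upper bound will then absorb the polynomial combinatorial factors arising from the enumeration and deliver the announced polynomial decay.
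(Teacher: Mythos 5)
There is a genuine gap, and it is precisely the difficulty that the paper's proof is designed to sidestep. Your plan attacks the event directly under the natural coupling, by a union bound over all chain skeletons, extracting smallness only from the fact that one of the $k$ jumps must have length at least $M/(2k+1)$. But the enumeration of skeletons costs a factor of order $(C n M^{d-1})^{k}$ (choices of relay space-time points in $\mathcal{H}_{2M}\times[0,n]$, before even accounting for the intermediate discrepancy sites), while the only probabilistic gain you claim is a single tail $C_L\,(M/(2k+1))^{-L}$ with $L$ fixed; the other $k-1$ relay steps contribute no smallness in your bound. For fixed $L$ this is already not summable over $k$ (you would need $L$ to grow with $k$), and since $k$ can be as large as the total number of particles emitted from $\mathcal{H}_{2M}$, i.e.\ of order $nM^{d-1}$, the threshold $M/(2k+1)$ even drops below $1$ for $d\geq 3$, so the tail bound gives nothing at all in that range. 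The appeal to monotonicity of the emission times and to ``sequential conditioning and decoupling'' does not repair this: what is missing is a per-relay probability cost (the chance that a given particle actually hits the current discrepancy site), and controlling that conditionally on the evolving aggregate is exactly the hard part — it is why the paper needs the whole percolation/multiscale machinery of Sections~\ref{sec: chains to perco}--\ref{sec: JB} for the forest statement, and why it does \emph{not} prove Theorem~\ref{thm: stab agg dagg} by bounding chains.

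The paper's actual route avoids chains of changes altogether for the aggregate, via the \emph{special coupling} of Section~\ref{subsec: couplings}. Property $(\star)$ guarantees that every discrepancy between $A^{\dagger}_n[2M]$ and $A^{\dagger}_n[M']$ is, at all times, the current endpoint of a \emph{single} random-walk trajectory emitted from a source of $\mathcal{H}_{M'}\setminus\mathcal{H}_{2M}$ (when a relay occurs, the far particle is ``woken up'' and its own walk is continued, rather than the discrepancy being handed to a new walk). On the event that $A^{\dagger}_n[\infty]\cap\mathbb{Z}_M^c$ lies in the cone $\mathscr{C}_{\varepsilon}^{\alpha}$ (Proposition~\ref{thm: global upper bound}) and that no walk launched above level $2M$ visits $\mathbb{Z}_M$ before exiting the cone (the event $D_M$, controlled by the donut argument of Lemma~\ref{prop: particle stab}), such a walk must settle before ever reaching $\mathbb{Z}_M$, so no discrepancy can appear in the strip; one then lets $M'\to\infty$ using the monotonicity of the natural coupling. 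If you want to salvage your approach you would have to either quantify the relay probabilities (which reintroduces all the dependence on the random aggregate) or find a device, like the special coupling, that collapses each chain to one walk; as written, the union bound does not close.
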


Proof of Theorem~\ref{thm: stab agg dagg} is based on the global upper bound for $A^{\dagger}_n[\infty]$ mentioned above, on a donut argument already used in \cite{chenavier2023bi} and on a variant of the natural coupling between two aggregates, called the \textit{special coupling}.

\subsection*{Why does the proof of \cite{chenavier2023bi} collapse in higher dimensions?}
\label{subsec: proof fails}

In \cite{chenavier2023bi}, the authors proved the existence of \textsc{idla} forests $(\mathcal{F}_n)_{n \geq 1}$ and $\fdag{\infty}$--see (\ref{defi:Fn}) and (\ref{defi:Finfty})--in the case of dimension $d=2$, i.e.\@\xspace with the set of sources $\mathcal{H} = \{0\} \times \mathbb{Z}$. Their proof only works for dimension $d=2$ and cannot be generalized to higher dimensions: let us explain why.

For any integer $n$, let us define the \textit{vacant set} $\mathcal{V}_n \subset \mathcal{H}$ by
\[
\mathcal{V}_n := \{ z \in \mathcal{H} : \, L(z) \cap A^{\dagger}_n[\infty] = \emptyset \}
\]
where $L(z) := z + \{(k,0) : k \in \mathbb{Z}\}$ is the horizontal line passing by $z$. It is proved in Corollary 5.2 of \cite{chenavier2023bi} that the random set $\mathcal{V}_n$ contains a.s. infinitely many sources. Due to the dimension $d=2$, this implies that the aggregate $A^{\dagger}_n[\infty]$ is made up of (infinitely many) disjoint, finite connected components. The aggregate $A^{\dagger}_n[\infty]$ being the vertex set of the \textsc{idla} forest $\mathcal{F}_n$, it is then impossible for a chain of changes initiated from a very far away source to spread and create a discrepancy inside a given strip $\mathbb{Z}_K$ (the relay particles cannot jump from a connected component of $A^{\dagger}_n[\infty]$ to another one). Corollary 5.2 of \cite{chenavier2023bi} is certainly still true in dimension $d \geq 3$ (in the sense that $A^{\dagger}_n[\infty]$ contains an infinite number of empty lines $\mathbb{Z}\times \{y\}$, for $y \in \mathbb{Z}^{d-1}$) but its consequence about $A^{\dagger}_n[\infty]$, due to planarity, definitely collapses in higher dimensions.

However, one could ask whether the aggregate $A^{\dagger}_n[\infty]$ would still be a union of disjoint, finite connected components in dimension $d \geq 3$. Actually that is wrong whenever $n$ is large enough. Indeed, let us call \textit{rooted} a source $z$ having emitted at least one particle during the time interval $[0,n]$. Hence, $z$ is rooted if and only if the corresponding \textsc{ppp} $\mathcal{N}_z$ has generated at least one top in $[0,n]$, which occurs with probability $1-e^{-n}$. The events $\{\text{$z$ is rooted}\}$, $z \in \mathcal{H}$, being independent from each other and their (common) probability tending to $1$ as $n \to \infty$, we get that the set of rooted sources percolates in $\mathcal{H}$ for $n$ large enough. Since a rooted source belongs to $A^{\dagger}_n[\infty]$, we conclude that this aggregate contains a (unique) infinite connected component.

\subsection*{Comparison with the ergodic External DLA model}

A very similar work has been done through a series of papers \cite{PYZ20,PYZ21,PZ}, but for External DLA instead of Internal DLA and only in dimension $d=2$, in which \cite{PYZ20} presents analog results to Theorems \ref{thm: forest stabilization} and \ref{thm: forest properties}. Let us describe these results and their proof techniques, and compare them to the present study.
% Leur travail est restreint à la dim 2 car, je pense, les particules sont transientes en dim > 2.

In \cite{PYZ20}, the authors construct a DLA model, denoted by $\{A_s^\infty\}_{s\leq t}$ for any $t > 0$, on the upper half planar lattice $\mathbb{H}$ (we use in this section notations from \cite{PYZ20}) and growing from the infinite horizontal line $L_0 = \Z \times \{0\}$, where the growth dynamic is governed by a stationary harmonic measure $\mathcal{H}$ (Theorem 1). Results on the stationary harmonic measure actually come from \cite{PZ}. Roughly speaking, the endpoint $y \in \mathbb{H} \setminus A_s^\infty$ of the outgoing edge $\vec{e} = (x,y)$, with $x \in A_s^\infty$, is added to the current aggregate with rate $\mathcal{H}(\vec{e})$ defined as the probability for a particle coming from infinity to come in $A_s^\infty$ through the edge $\vec{e}$. Hence such DLA models are called External because particles contributing to their growth come from outside whereas for Internal DLA ones, particles come from inside. The authors also state that, for every $t > 0$, $A_t^\infty$ is ergodic w.r.t. horizontal translations (Theorem 2 of \cite{PYZ20}).

To prove Theorem 1, the authors first use an interacting particle system valued in $\{0,1\}^\mathbb{H}$, called the interface process and introduced in \cite{PZ}, as a dominating process for their External DLA model. Then, a thinning procedure of this interface process allows one to construct on a common probability space all the aggregates $A_s^n$, $s \in [0,t]$ and $ n \geq 1$, where $A_s^n$ denotes the External DLA growing from the finite horizontal line $V_0^n = [-n,n] \times \{0\}$ up to time $s$. Thanks to this coupling construction, it then remains to prove that for any compact set $K$, the probabilities
\begin{equation}
\label{SummableProcaccia}
\mathbb{P} \big( \exists s \in [0,t] \mbox{ s.t. } A_s^n \cap K \not= A_s^{n+1} \cap K \big)
\end{equation}
are summable (Theorem 4 of \cite{PYZ20}). In other words, it is unlikely that discrepancies between $(A_s^n)_{s\leq t}$ and $(A_s^{n+1})_{s\leq t}$, that are reduced to the extremities $\pm (n+1,0)$ at time $0$, propagate over time within the compact set $K$. So far, our approach to get a stabilization result for the IDLA forests is the same as in \cite{PYZ20}.

However, the strategies to control and restrain the discrepancy propagation in their model and in ours will be completely different, mainly because these models are of different natures (External vs Internal). In \cite{PYZ20}, particles that propagate a lot from the current discrepancy set and toward the set $K$ are said \textit{bad} or even \textit{devastating}. For instance, a devastating particle first visits the current set of discrepancies (localized in a deterministic set $\textrm{Box}_0$ in the vicinity of the extremities $\pm (n+1,0)$) and then travels outside the current aggregates until adding a new site, i.e. a new discrepancy (localized in a deterministic set $\textrm{Box}$ in the vicinity of the origin, containing $K$ and far from $\textrm{Box}_0$). Using the fact that a particle coming from infinity first hits the current random aggregates before hitting the deterministic line $V_0^n$, the authors then bound the probability for a particle to be devastating by
\begin{equation}
\label{SupProcaccia}
\sup_{x \in \textrm{Box}_0} \mathbb{P}_x \big( \tau_{\textrm{Box}} < \tau_{L_0} \big)
\end{equation}
where $\tau_A$ denotes the hitting time of the set $A$. The supremum (\ref{SupProcaccia}) is controlled well enough in Lemma 6.2 of \cite{PYZ20} (using tools from \cite{PYZ21}) to make the probabilities (\ref{SummableProcaccia}) summable. Because for our Internal DLA model, particles come from inside and not from outside, we cannot apply the same strategy. For this reason, we establish a global upper bound (Proposition \ref{thm: global upper bound}) providing a deterministic cone including the current aggregate w.h.p. and playing the role of the line $V_0^n$.

Moreover, one of the first steps of the strategy used in \cite{PYZ20} (and crucial for the sequel) consists in bounding by $n^{\alpha}$, for any (small) $\alpha > 0$ and w.h.p., the number of discrepancies between the two coupled processes $\{A_s^n\}_{s\leq 1}$ and $\{A_s^{n+1}\}_{s\leq 1}$. In our $d$-dimensional setting, the (internal) aggregate $A^\dagger_{s}[M]$ is generated by particles emitted from sources of $\mathcal{H}_M = \{0\} \times \llbracket -M, M \rrbracket^{d-1}$ (here the spatial parameter $M$ plays the same role as $n$ in \cite{PYZ20}). Coupling both aggregates $A^\dagger_{s}[M]$ and $A^\dagger_{s}[M+1]$, each particle coming from a source of the set $\mathcal{H}_{M+1} \setminus \mathcal{H}_{M}$-- whose cardinal is of order $M^{d-2}$ --starts a chain of changes that is relayed by particles emitted from $\mathcal{H}_{M}$ (and working for both aggregates). So, in our context, the number of discrepancies cannot be bounded as in \cite{PYZ20} and this is why we introduce a Boolean model encoding the propagation of discrepancies, that we prove to be subcritical.

\subsection*{Organization of the paper}

Our paper is organized as follows. In Section~\ref{sec: rappels} are gathered several properties about the aggregate $A^{\dagger}_n[\infty]$ that will be useful in the sequel. The natural and special couplings are detailed and the \textit{Aggregate stabilization} result Theorem \ref{thm: stab agg dagg} is proved. In Section~\ref{sec: chains to perco}, we explain how chains of changes can be interpreted in terms of percolation. In particular, we define in Section~\ref{subsec: instant perco} a discrete Boolean model $\hat{\Sigma}_\varepsilon$, with intensity $p_\varepsilon \to 0$ as $\varepsilon \to 0$, which is proved to be supercritical (i.e. percolating) for any $\varepsilon > 0$ under the \textit{Absurd hypothesis} (\ref{Absurd-Hypo}). Conversely, in Section \ref{sec: JB}, we adapt to our context a multiscale argument due to \cite{gouere2009subcritical} which allows us to prove that $\hat{\Sigma}_\varepsilon$ does not percolate provided the intensity $p_\varepsilon$ is small enough, leading to a contradiction with the conclusion of Section~\ref{sec: chains to perco}. Finally, Section \ref{sec: proof forest properties} is devoted to the proof of Theorem \ref{thm: forest properties}. A detailed proof of Proposition \ref{thm: global upper bound} is given in the Appendix \ref{sec: appendix}.

%Adapting our model to fit the proof of \cite{gouere2009subcritical} is far from straightforward, and requires additional scrutiny of $A^{\dagger}_n[\infty]$.

\section{The infinite aggregate $A^{\dagger}_n[\infty]$}
\label{sec: rappels}

Let $n\geq 1$. Recall that the infinite aggregate $A^{\dagger}_n[\infty]$ has been defined in (\ref{Adag(n,infty)}) as the limit of the increasing sequence of (finite) aggregates $(A^{\dagger}_n[M])_{M \geq 1}$. In this section, are gathered several results about $A^{\dagger}_n[\infty]$ which will be helpful for the proof of our main result Theorem \ref{thm: forest stabilization}. Indeed, the infinite aggregate $A^{\dagger}_n[\infty]$ is intended to be the vertex set of the infinite-volume \textsc{idla} forest $\fdag{n}$. Results about $A^{\dagger}_n[\infty]$ are stated in Section \ref{sect:resultsAdag}. Both natural and special couplings are given in Section \ref{subsec: couplings}. Finally, we prove the stabilization result for $A^{\dagger}_n[\infty]$ (Theorem \ref{thm: stab agg dagg}) in Section \ref{subsec: proof of stab agg dagg}.

\subsection{Results}
\label{sect:resultsAdag}

Let us start with an invariance property in distribution for the infinite aggregate $A^{\dagger}_n[\infty]$. This result is based on the fact that the random ingredients generating $A^{\dagger}_n[\infty]$, respectively the collection of \textsc{ppp} $\{\mathcal{N}_z : z \in \mathcal{H}\}$ and the random walks $\{ S_{z,j} : z \in \mathcal{H}, j \geq 1\}$, are respectively i.i.d and independent. See Proposition 2.2 of \cite{chenavier2023bi} for the same result but in dimension $d=2$. The same proof actually works for any $d\geq 2$.

\begin{proposition}
\label{prop: invariance}
The distribution of $A^{\dagger}_n[\infty]$ is invariant w.r.t. translations of the source set $\mathcal{H}$:
\[
T_k A^{\dagger}_n[\infty] \overset{\mathrm{law}}{=} A^{\dagger}_n[\infty]
\]
where $T_k$, for $k \in \mathcal{H}$, is defined on $\mathbb{Z}^d$ by $T_k(x) = x+k$.
\end{proposition}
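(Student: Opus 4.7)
The plan is to lift the obvious iid symmetry of the random inputs to an invariance in law for the limiting aggregate, using the natural coupling of Section~\ref{subsec: couplings} to pass to the limit. Fix $k\in\mathcal{H}$ and introduce a translated family of inputs by setting, for every $z\in\mathcal{H}$ and $j\geq 1$,
\[
\mathcal{N}'_z := \mathcal{N}_{z+k} \quad \text{and} \quad S'_{z,j}(\cdot) := S_{z+k,j}(\cdot) - k .
\]
Since the original clocks $\{\mathcal{N}_z\}$ and the walks $\{S_{z,j}\}$ are iid and a simple random walk on $\mathbb{Z}^d$ is translation-invariant in law, the translated family $\{\mathcal{N}'_z,(S'_{z,j})_{j\geq 1}\}_{z\in\mathcal{H}}$ has the same joint law as $\{\mathcal{N}_z,(S_{z,j})_{j\geq 1}\}_{z\in\mathcal{H}}$.

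Next I would rerun the finite-volume construction of Section~\ref{sec: intro bis} with source set $\mathcal{H}_M$ fed with the translated inputs. Unwinding the recursion, the resulting aggregate is exactly $T_{-k}(A^{\dagger}_n[\mathcal{H}_M+k])$, where I write $A^{\dagger}_n[\mathcal{S}]$ for the aggregate produced from the \emph{original} inputs using source set $\mathcal{S}\subset\mathcal{H}$ on the time interval $[0,n]$. Combined with the distributional identity of inputs above, this gives
\[
T_{-k}\!\left(A^{\dagger}_n[\mathcal{H}_M+k]\right) \overset{\mathrm{law}}{=} A^{\dagger}_n[\mathcal{H}_M] = A^{\dagger}_n[M].
\]

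To conclude, I would let $M\to\infty$ on both sides. For the right-hand side this is built in: $A^{\dagger}_n[M] \uparrow A^{\dagger}_n[\infty]$ by~\eqref{Adag(n,infty)}. For the left-hand side I would invoke the natural coupling, which provides $A^{\dagger}_n[\mathcal{S}_1]\subset A^{\dagger}_n[\mathcal{S}_2]$ whenever $\mathcal{S}_1\subset\mathcal{S}_2$. Since $\mathcal{H}_{M-\|k\|}\subset\mathcal{H}_M+k\subset\mathcal{H}_{M+\|k\|}$ for $M\geq\|k\|$, a sandwich argument shows that $A^{\dagger}_n[\mathcal{H}_M+k]$ a.s.\ coincides with $A^{\dagger}_n[\infty]$ on any fixed finite window once $M$ is large enough, hence converges a.s.\ to $A^{\dagger}_n[\infty]$ in the local topology on subsets of $\mathbb{Z}^d$. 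Almost sure local convergence forces convergence in distribution, and passing to the limit in the displayed identity yields $A^{\dagger}_n[\infty] \overset{\mathrm{law}}{=} T_k(A^{\dagger}_n[\infty])$.

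I do not anticipate a genuine obstacle here: the content is a stationarity argument for a translation-invariant input field, and the only delicate bookkeeping is (i) checking that the translated inputs really reproduce the $\mathcal{H}_M+k$-aggregate in the translated frame, and (ii) making the local-convergence claim precise enough to justify the distributional limit. Both points are routine given the explicit description of the natural coupling recalled in Section~\ref{subsec: couplings}.
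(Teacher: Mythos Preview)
Your argument is correct and is exactly the stationarity-of-inputs approach the paper has in mind: the paper does not give a proof in place but simply notes that the claim follows from the iid structure of the clocks and walks and refers to Proposition~2.2 of \cite{chenavier2023bi}, whose proof carries over verbatim to $d\geq 2$. Your sandwich $\mathcal{H}_{M-\|k\|}\subset\mathcal{H}_M+k\subset\mathcal{H}_{M+\|k\|}$ together with the monotonicity from the natural coupling is precisely the way to pass to the limit, so nothing is missing.
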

The following result, given by Proposition \ref{thm: global upper bound}, provides a global control of the shape of $A^{\dagger}_n[\infty]$. It is referred to as a \textit{Global Upper Bound} and is necessary in the proof of Theorem \ref{thm: stab agg dagg} in Section \ref{subsec: proof of stab agg dagg}. Let us begin by introducing some notation. For $0 < \alpha < 1$ and $\varepsilon > 0$, let us consider the cone $\mathscr{C}_{\varepsilon}^{\alpha}$ as 
\[
\mathscr{C}_{\varepsilon}^{\alpha} = \bigcup_{\ell \geq 0} \left\{ z \in \mathbb{Z}^d,\ \|p_{\mathcal{H}}(z)\| = \ell,\ |z_1| \leq \varepsilon \ell^{\alpha}  \right\},
\]
where $p_{\mathcal{H}}$ denotes the orthogonal projection onto $\mathcal{H}$. Then, for any integer $M\geq 0$, we define the event  
\begin{equation}
\label{eqn: over*}
\Over^{\dagger}_{\alpha}(M,n,\varepsilon) := \{A^{\dagger}_n[\infty] \cap \mathbb{Z}_M^c \nsubset \mathscr{C}_{\varepsilon}^{\alpha}\} ~,
\end{equation}
meaning that the aggregate $A^{\dagger}_n[\infty]$ exceeds the cone $\mathscr{C}_{\varepsilon}^{\alpha}$ outside the strip $\mathbb{Z}_M$. Proposition \ref{thm: global upper bound} states an upper bound for the probability of $\Over^{\dagger}_{\alpha}(M,n,\varepsilon)$ which implies that above a certain level, the aggregate is almost surely contained inside the cone $\mathscr{C}_{\varepsilon}^{\alpha}$.

\begin{proposition}{(Global upper bound)}
\label{thm: global upper bound}
For any $L\geq 1$, for any $\varepsilon > 0$ and $\alpha \in (1-1/d,1)$, there exists a positive constant $C = C(d,n,\varepsilon,\alpha,L)$ such that for all integers $M > 1$,
\[
\mathbb{P}\left(\Over^{\dagger}_{\alpha}(M, n, \varepsilon)\right) \leq \frac{C}{M^L} ~.
\]
In particular, with probability $1$, there exists a (random) level from which $A^{\dagger}_n[\infty] \cap \mathbb{Z}_M^c$ is included in the cone $\mathscr{C}_{\varepsilon}^{\alpha}$.
\end{proposition}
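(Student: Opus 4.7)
The plan is to reduce the geometric event $\Over^{\dagger}_{\alpha}(M,n,\varepsilon)$ to a single-column tail estimate via the translation invariance of $A^{\dagger}_n[\infty]$, and then to close the argument with a union bound. For $y \in \mathcal{H}$, introduce the column height above $y$,
\[
H(y) := \sup\bigl\{|z_1| : z \in A^{\dagger}_n[\infty],\ p_{\mathcal{H}}(z) = y\bigr\},
\]
with the convention $\sup\emptyset = 0$. Since translations by elements of $\mathcal{H}$ preserve the $e_1$-coordinate, Proposition~\ref{prop: invariance} gives $H(y) \stackrel{\mathrm{law}}{=} H(0)$ for every $y \in \mathcal{H}$. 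The event $\Over^{\dagger}_{\alpha}(M,n,\varepsilon)$ is contained in $\bigcup_{\ell > M}\bigcup_{\|y\|=\ell}\{H(y) > \varepsilon \ell^{\alpha}\}$, so counting the $O(\ell^{d-2})$ points of $\mathcal{H}$ at level $\ell$ and taking a union bound gives
\[
\mathbb{P}\bigl(\Over^{\dagger}_{\alpha}(M,n,\varepsilon)\bigr) \leq C_d \sum_{\ell > M} \ell^{d-2}\, \mathbb{P}\bigl(H(0) > \varepsilon\ell^{\alpha}\bigr).
\]
The target bound $C/M^L$ then reduces to a super-polynomial tail estimate $\mathbb{P}(H(0) > T) \leq \psi_n(T)$ on the column height above the origin.

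For the tail estimate on $H(0)$, the heuristic is that the typical thickness of $A^{\dagger}_n[\infty]$ above any point of $\mathcal{H}$ is of order $n$: each source emits a Poisson$(n)$ number of particles, and by translation invariance the average deposited mass per column of $\mathcal{H}$ matches this emission intensity. Hence $\{H(0) > T\}$ with $T \gg n$ is a large-deviation event, which either demands an unlikely over-accumulation of particles above $0$ or the arrival of a particle emitted from a source very far from the origin. A quantitative control of $\psi_n$ can then be obtained by comparing the random walks driving the particles with free simple random walks — absorption by the current aggregate can only shorten trajectories — and combining this with sharp inner/outer shape estimates for single-source IDLA applied to the contribution of each individual source. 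Summing these estimates over sources, weighted by the hyperplane geometry and the Poisson number of emissions, yields the required super-polynomial decay.

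The main obstacle is carrying out the comparison rigorously in the infinite-source setting: $A^{\dagger}_n[\infty]$ is defined only as a monotone limit of the finite aggregates $A^{\dagger}_n[M']$, so the tail bounds must be established uniformly in $M'$ before passing to the limit via the natural coupling. The most delicate step is controlling the contribution of far sources $y'$ with $\|y'\| \gg T$, which requires bounding the probability that a particle emitted from $y'$ travels a distance of order $\|y'\|$ through the aggregate before being absorbed in the column above the origin. Finally, the constraint $\alpha > 1 - 1/d$ is what lets the final summation close: it ensures that at each level $\ell \geq M$ the cone width $\varepsilon\ell^{\alpha}$ is large enough for $\psi_n(\varepsilon\ell^{\alpha})$ to dominate the geometric factor $\ell^{d-2}$, yielding the prescribed polynomial rate $M^{-L}$.
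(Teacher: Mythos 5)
There is a genuine gap: your argument reduces Proposition~\ref{thm: global upper bound} to a super-polynomial tail bound for the column height $H(0)$, but that tail bound is never proved, and it carries essentially all of the difficulty of the proposition. The heuristics you offer for it do not close. First, comparing with free simple random walks cannot work in the relevant regime: the probability that a free walk launched from a source $y'$ at distance $\ell$ ever reaches the column above the origin is only polynomially small (of order $\ell^{-(d-2)}$), and summing over the $\sim \ell^{d-2}$ sources at level $\ell$ gives no decay at all. The actual decay must come from the constraint that a particle keeps moving only while it is \emph{inside the current aggregate}, which far from the origin is thin — but quantifying this requires knowing the aggregate's shape, i.e.\ exactly what you are trying to prove. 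The paper breaks this circularity with a bootstrap: it passes to the reordered aggregates $A^*_n[\cdot]$ via the Abelian property, inducts on the time parameter $n$ (Proposition~\ref{prop: fine BMN short}), and at the induction step uses the ``first overflowing particle'' so that the previous aggregate is still inside $\mathscr{C}_{\varepsilon_n}^{\alpha}$, which makes the donut-crossing estimate applicable; a thin-tentacle lemma (Lemma~\ref{lemma: fine tentacle short}) and Poisson concentration for the emission counts (the event $\mathscr{E}_M(\gamma)$) complete the dichotomy. Your proposal has no analogue of this bootstrap, and the suggestion to apply single-source inner/outer shape estimates ``to the contribution of each individual source'' does not make sense for the multi-source aggregate, which is not a superposition of single-source IDLA clusters (particles from one source walk through, and are deflected by, the mass deposited by all the others).

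Second, you have misplaced the role of the hypothesis $\alpha\in(1-1/d,1)$. If one had a super-polynomial tail $\psi_n$ for $H(0)$, then the sum $\sum_{\ell>M}\ell^{d-2}\psi_n(\varepsilon\ell^{\alpha})$ would decay faster than any power of $M$ for \emph{every} $\alpha>0$, so summability is not where the constraint bites. In the paper, $\alpha>1-1/d$ is needed inside the proof of the tail estimate itself: it allows the choice of $\gamma<(\alpha-1)d+1$ in $\mathscr{E}_M(\gamma)$, so that the emission budget of sources within distance $K\ell^{\eta}$ of the overflow site cannot fill a ball of volume $\sim\ell^{\alpha d}$ unless $\eta>1$, forcing the offending particle to originate far away and thus to cross of order $\ell^{\eta(1-\alpha)}$ donuts (this is also where $\alpha<1$ is used). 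The fact that your sketch never needs the hypothesis in this way is a symptom that the key step — the shape-dependent control of far-travelling particles — has not actually been engaged.
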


Proposition \ref{thm: global upper bound} actually is a refined version of Theorem 4.1 of \cite{chenavier2024idla} which states the same result but for $\alpha = 1$ (i.e. for a wider cone). So only a short proof of Proposition \ref{thm: global upper bound} is given in the Appendix, focusing on the differences due to the use of the thinner cone $\mathscr{C}_{\varepsilon}^{\alpha}$, with $\alpha \in (1-1/d,1)$. However this refinement (using $\mathscr{C}_{\varepsilon}^{\alpha}$ instead of $\mathscr{C}_{\varepsilon}^1$) is required here to get Theorem \ref{thm: stab agg dagg}--and then our main result Theorem \ref{thm: forest stabilization}--as explained at the end of Section \ref{subsec: proof of stab agg dagg}.

\subsection{Two couplings}
\label{subsec: couplings}

Let $n \geq 1$ and $M' > M$. In this section, we detail two different couplings allowing to construct both aggregates $A^{\dagger}_n[M]$ and $A^{\dagger}_n[M']$ on the same probability space in such a way that
\begin{equation}
\label{InclusionAgg}
A^{\dagger}_n[M] \subset A^{\dagger}_n[M']  \quad \mathrm{a.s.}
\end{equation}
The first one, called the \textit{natural coupling}, will be used intensively in Section \ref{sec: chains to perco} to describe the chains of changes. It has been introduced in \cite{chenavier2023bi}. In this paper, we will require a variant of the natural coupling, called the \textit{special coupling}, ensuring a special property $(\star)$ (see below) in addition to \eqref{InclusionAgg}. The special coupling will be used in Section \ref{subsec: proof of stab agg dagg} to get Theorem \ref{thm: stab agg dagg}. Hence, we first recall the natural coupling in details and then its variant.

\medskip

Let us begin by describing the natural coupling. Let $\kappa := \sum_{z \in \mathcal{H}_{M'}} \# \mathcal{N}_z([0,n])$ be the total number of particles sent from $\mathcal{H}_{M'}$ during the time interval $[0,n]$. Let us build two sequences of aggregates $(A_i)_{0\leq i \leq \kappa}$ and $(B_i)_{0\leq i \leq \kappa}$ such that
\[
\text{for all $0 \leq i \leq \kappa$, } \; A_i \subset B_i \; \text{ and } \; A_{\kappa} = A^{\dagger}_n[M] , \; B_{\kappa} = A^{\dagger}_n[M'] ~.
\]
We proceed by induction on $i \in \llbracket 0, \kappa \rrbracket$ by sorting the $\kappa$ particles according to their starting times (from time $0$ to $n$). When $i = 0$ (no particles have been emitted), we have that $A_0 = B_0 =\emptyset$. Now, suppose $i \geq 0$ and $A_i \subset B_i$ and let us say that the $(i+1)$-th particle is sent from a source $z\in \mathcal{H}_{M'}$.
\begin{itemize}
\item[$\bullet$] If $z \in \mathcal{H}_{M'}\!\setminus\!\mathcal{H}_M$ then the $(i+1)$-th particle only contributes to $B_i$. It adds a random site $x$ to $B_i$ while $A_i$ remains unchanged:
\[
A_{i+1} := A_i \subset B_i \subset B_i \cup \{x\} =: B_{i+1}.
\]
\item[$\bullet$] If $z \in \mathcal{H}_M$, the $(i+1)$-th particle contributes to both aggregates. Since $A_i \subset B_i$, it exits $A_i$ before $B_i$, and adds a random site $x$ to $A_i$. Now, we must consider two cases. 
\begin{itemize}
	\item If $x\notin B_i$, then $x$ is added to $B_i$. Hence,
	\[
	A_{i+1} := A_i \cup \{x\} \subset B_i \cup \{x\} =: B_{i+1} ~.
	\]
	\item If $x \in B_i$, then the $(i+1)$-th particle does not exit $B_i$ in $x$, and continues its trajectory until exiting $B_i$ on some site $x' \not= x$. In this case,
	\[
	A_{i+1} := A_i\cup\{x\} \subset B_i \subset B_i \cup \{x'\} =: B_{i+1} ~.
	\]
\end{itemize}
\end{itemize}

Let us now detail the special coupling. The total number of particles sent from $\mathcal{H}_{M'}$ during $[0,n]$ is still denoted by $\kappa$ and, as before, we build two sequences of aggregates $(\tilde{A}_{i})_{0\leq i \leq \kappa}$ and $(\tilde{B}_{i})_{0\leq i \leq \kappa}$ by induction on $i \in \llbracket 0, \kappa \rrbracket$. Our construction will ensure that
\[
\text{for all $0 \leq i \leq \kappa$, } \; \tilde{A}_i \subset \tilde{B}_i \; \text{ and } \; \tilde{A}_{\kappa} = A^{\dagger}_n[M] , \; \tilde{B}_{\kappa} \overset{\mathrm{law}}{=} A^{\dagger}_n[M']~,
\]
while also ensuring that the following condition holds, for any $0 \leq i \leq \kappa$:
\begin{center}
$(\star)$ \; Any element $x \in \tilde{B}_i\!\setminus\!\tilde{A}_i$ is produced by a particle emitted from a source in $\mathcal{H}_{M'}\!\setminus\!\mathcal{H}_M$.
\end{center}
Let us build the $\tilde{A}_{i}$'s and $\tilde{B}_{i}$'s. When $i = 0$, we have that $\tilde{A}_0 = \tilde{B}_0 =\emptyset$. Let us assume for some $i \geq 0$ that $\tilde{A}_i \subset \tilde{B}_i$, and that they both satisfy condition $(\star)$. Let us say that the $(i+1)$-th particle is sent from a source $z\in \mathcal{H}_{M'}$ (and moves according to a random walk $S_z$).
\begin{itemize}
\item[$\bullet$] If $z \in \mathcal{H}_{M'}\!\setminus\!\mathcal{H}_M$ then the $(i+1)$-th particle only contributes to $B_i$. As for the natural coupling, it adds a random site $x$ to $B_i$ while $A_i$ remains unchanged:
\[
\tilde{A}_{i+1} := \tilde{A}_i \subset \tilde{B}_i \subset \tilde{B}_i \cup \{x\} =: \tilde{B}_{i+1}
\]
and the couple $(\tilde{A}_{i+1},\tilde{B}_{i+1})$ still satisfies $(\star)$.
\item[$\bullet$] If $z \in \mathcal{H}_M$, the $(i+1)$-th particle contributes to both aggregates. Since $\tilde{A}_i \subset \tilde{B}_i$, it exits $\tilde{A}_i$ before $\tilde{B}_i$, and adds a random site $x$ to $\tilde{A}_i$. Once again, we consider two cases.
\begin{itemize}
	\item If $x \notin \tilde{B}_i$ then we proceed as for the natural coupling: $x$ is also added to $\tilde{B}_i$ which again implies
	\[
	\tilde{A}_{i+1} := \tilde{A}_i \cup \{x\} \subset \tilde{B}_i \cup \{x\} =: \tilde{B}_{i+1} ~.
	\]
	The couple $(\tilde{A}_{i+1},\tilde{B}_{i+1})$ still satisfies $(\star)$ since the just added site $x$ belongs to $\tilde{A}_{i+1}$ and $\tilde{B}_{i+1}$.
	\item If $x \in \tilde{B}_i\!\setminus\!\tilde{A}_i$, then thanks to condition ($\star$), the site $x$ was reached by a particle (say with index $i' < i$) originating from some source $z' \in \mathcal{H}_{M'}\!\setminus\!\mathcal{H}_{M}$ (and moving according to a random walk $S_{z'}$). Then, the $(i+1)$-th particle settles at $x$, wakes up the $i'$-th particle which continues its trajectory (according to $S_{z'}$) until exiting $\tilde{B}_i$ on some site $y$. In this case,
	\[
	\tilde{A}_{i+1} := \tilde{A}_i \cup \{x\} \subset \tilde{B}_i \subset \tilde{B}_i \cup \{y\} =: \tilde{B}_{i+1} ~.
	\]
	Note that the couple $(\tilde{A}_{i+1},\tilde{B}_{i+1})$ satisfies $(\star)$ since $y$, which is a discrepancy between $\tilde{A}_{i+1}$ and $\tilde{B}_{i+1}$, has been produced by a particle sent from $\mathcal{H}_{M'}\!\setminus\!\mathcal{H}_{M}$.
\end{itemize}
\end{itemize}

To conclude, let us remark that in both couplings, the aggregates $A_i$ and $\tilde{A}_i$ are built in the same way. They are a.s. equal and then $\tilde{A}_\kappa = A_\kappa = A^{\dagger}_n[M]$. This is not the case for the $\tilde{B}_i$'s: even if $\tilde{B}_i = B_i$, the site $y$ added following the random walk $S_{z'}$ could be different from the site $x'$ added following $S_{z}$. However, in distribution, they are equal:
\[
\tilde{B}_{\kappa} \overset{\mathrm{law}}{=} B_{\kappa} = A^{\dagger}_n[M']
\]
since the trajectory used to add the site $y$ to $\tilde{B}_{i}$ is the concatenation of $S_z$ from the source $z$ to $x$, thus $S_{z'}$ after $x$. This trajectory is a random walk.

\begin{figure}[!ht]
\centering
\begin{tikzpicture}[scale=0.8, use Hobby shortcut,closed=true]
	
	\fill [color=gray!20](-8, 0) .. (-5,2) .. (-4, 2.4) .. (-2.7, 2.1) .. (2, 2.4) .. (5, 2) .. (6, 1.5) .. (8, 0) .. (5,-2) .. (3, -2.1) .. (0, -2.4) .. (-4, -2.2);
	
	\fill [color=gray!60]
	(-6, 0) .. controls (-5.8, 1) and (-4.8, 1.6) .. (-2, 1.8)
	.. controls (-1, 1.7) and (1, 1.4) .. (2, 1.3)
	.. controls (3, 1.3) and (4.2, 1.5) .. (5, 1.2)
	.. controls (5.3, 1.1) and (6, 0.6) .. (6, 0)
	.. controls (5.8, -1.6) and (4.7, -1.3) .. (2, -1.8)
	.. controls (1, -1.9) and (-0.5, -2.1) .. (-2, -1.9)
	.. controls (-3, -1.8) and (-4.2, -1.6) .. (-5, -1.3)
	.. controls (-5.5, -1.2) and (-6, -0.5) .. (-6, 0);
	\draw [->,very thick] (-9,0) -- (9,0);
	\draw (-7.2, 0) node[below] {$-M'$};
	\draw (7, 0) node[below] {$M'$};
	\draw (1.05, 0.05) node[above] {$z$};
	\draw (6.65, 0.05) node[above] {$z'$};
	\draw (0, 0) node[below] {$0$};
	
	\draw (-5.2, 0) node[below] {$-M$};
	\draw (5, 0) node[below] {$M$};
	
	\draw (0, 0) node {$+$};
	\draw (1, 0) node {$+$};
	\draw (6.5, 0) node {$+$};
	\draw (7, 0) node {$+$};
	\draw (-7, 0) node {$+$};
	\draw (-5, 0) node {$+$};
	\draw (5, 0) node {$+$};
	\draw (3.2, 1.4) node[right] {$x$};
	\draw (3.68, 2.38) node[right] {$y$};
	\draw (2.5, 2.5) node[right] {$x'$};
	
	\draw (4, 4) node[red, right] {$S_{z'}$};
	\draw (2.5, 4.2) node[green!40!gray, left] {$S_{z}$};
	
	\draw [blue, opacity=0.5, ultra thick] (3.17, 1.35) .. (3.68, 2.38);
	\draw [red] (6.5, 0)to[curve through={(5.5, 0.5) .. (5,0).. (4.5, -0.5) .. (3, 0) .. (3.5, 2)}](4, 5); 
	\draw[blue, opacity=0.5, ultra thick] (1, 0) .. controls (2.5, -0.5) and (4.5, 0.5) .. (3.17, 1.35);
	\draw[green!40!gray] (1, 0) .. controls (2.5, -0.5) and (4.5, 0.5) .. (3.17, 1.35); 
	\draw[green!40!gray] (3.17, 1.35) .. controls (2.5, 2) and (2, 4) .. (1, 5);
	\filldraw [black] (3.17, 1.35) circle (1pt);
	\filldraw [black] (3.68, 2.38) circle (1pt);
	\filldraw [black] (2.5, 2.42) circle (1pt);
	\draw (-8, -2) node{$A^{\dagger}_n[M']$};
	\draw (-2, 2) node[above] {$A^{\dagger}_n[M]$};
	\draw (-7, -2) -- (-6.5, -1.7);
	\draw (-2, 2.1) -- (-2, 1.5);
	%\draw (-8,-2) grid (8,2);
\end{tikzpicture}
\caption{The aggregates $A^{\dagger}_n[M]$ and $A^{\dagger}_n[M']$ are represented in dark and light gray. The trajectories of random walks $S_z$ and $S_{z'}$ are respectively depicted in green and red. In the natural coupling, the site $x'$ is added to $\tilde{B}_i$ using solely $S_z$ whereas in the special coupling, the site $y$ is added to $\tilde{B}_i$ using first $S_z$ until exiting $\tilde{A}_i = A_i$ and then $S_{z'}$. We highlight in blue the actual path that is realized when doing so.}
\label{fig: coupling}
\end{figure}
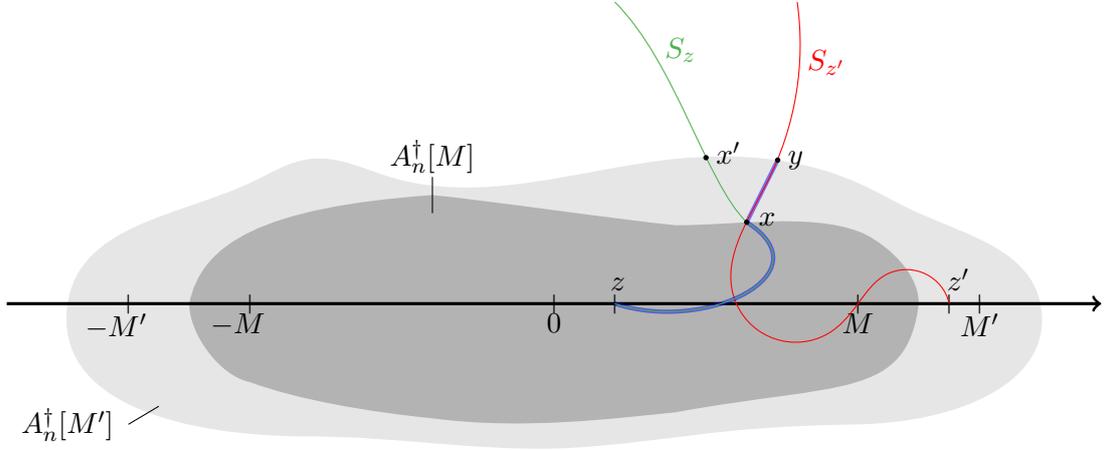

\subsection{Proof of Theorem \ref{thm: stab agg dagg}}
\label{subsec: proof of stab agg dagg}

Let $n \geq 1$. Our goal is to prove that for any $L \geq 1$, there exists a positive constant $C = C(n,d,L)$ such that for any $M\geq 1$, 
\[
\mathbb{P} \big( A^{\dagger}_n[\infty] \cap \mathbb{Z}_M = A^{\dagger}_n[2M] \cap \mathbb{Z}_M \big) \geq 1 - \frac{C}{M^L} ~.
\]

To do it, let us introduce the event 
\[
D_M := \left\{\begin{array}{c}
\text{The trajectory of any random walk associated with a particle of $A^{\dagger}_n[\infty]$ and starting} \\
\text{from a level greater than $2M$ does not visit the strip $\mathbb{Z}_M$ before exiting $\mathscr{C}_{\varepsilon}^{\alpha}$} \\
\end{array}
\right\} ~. 
\]
The event $D_M$ has a probability tending to $1$ as $M \to \infty$:

\begin{lemma}
\label{prop: particle stab}
For any $L\geq 1$, for any $\varepsilon > 0$ and $\alpha \in (0, 1)$, there exists a positive constant $C = C(d,n,\varepsilon,\alpha,L)$ such that for all $M\geq 1$, 
\[
\mathbb{P}\left( D_M \right) \geq 1 - \frac{C}{M^L}.
\]
\end{lemma}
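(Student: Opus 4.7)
The plan is to bound $\mathbb{P}(D_M^c)$ by a union bound over sources $z\in\mathcal{H}$ with $\|z\|_\infty>2M$ and over the particles emitted by each such source during $[0,n]$, combined with a single-walk estimate showing that visiting $\mathbb{Z}_M$ before first exiting the thin cone $\mathscr{C}_{\varepsilon}^{\alpha}$ is super-polynomially rare for a walk starting at a large level. Since the trajectory of each particle is a prefix of its associated infinite simple random walk, the bad event for a given particle is contained in the bad event for the full walk, so no delicate argument about the truncation at the settling time is needed.

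First I would perform the union bound. The number of sources in $\mathcal{H}=\{0\}\times\mathbb{Z}^{d-1}$ at exact level $\ell$ is $(2\ell+1)^{d-1}-(2\ell-1)^{d-1}=O(\ell^{d-2})$, and the mean number of particles emitted from each source during $[0,n]$ is $n$. This gives
\[
\mathbb{P}(D_M^c)\leq n\sum_{\ell>2M}C_d\,\ell^{d-2}\,q_\ell,
\]
where $q_\ell$ denotes the supremum over $z\in\mathcal{H}$ of level $\ell$ of the probability that a simple random walk starting at $z$ visits $\mathbb{Z}_M$ before first exiting $\mathscr{C}_{\varepsilon}^{\alpha}$.

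The core of the proof is showing that $q_\ell$ decays faster than any polynomial in $\ell$. For $L\geq 1$ fixed, I would pick a threshold time $T=c_L(\ell-M)^2/\log\ell$ and split the bad event according to whether the hitting time $T_M$ of $\mathbb{Z}_M$ satisfies $T_M\leq T$ or not. On $\{T_M\leq T\}$ the walk must have realized an $\mathcal{H}$-displacement $\geq\ell-M\geq\ell/2$ within $T$ steps, an event of probability at most $Ce^{-c(\ell-M)^2/T}$ by a coordinatewise Azuma inequality, which is $\leq C/\ell^{L+d}$ for $c_L$ small enough. On the complementary event the walk stays in the cone for at least $T$ steps. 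The reflection principle gives $\max_{k\leq T}\|S(k)\|_\infty\leq\ell+O(\sqrt{T\log T})$ with overwhelming probability, so the cone constraint $|S(k)_1|\leq\varepsilon\ell_k^\alpha$ forces the one-dimensional lazy random walk $(S(k)_1)_{k\leq T}$ to remain confined in $[-L^*,L^*]$ with $L^*=O(\ell^\alpha(\log\ell)^{\alpha/2})$. The classical small-ball estimate for one-dimensional random walks then yields a confinement probability bounded by
\[
\exp\!\bigl(-cT/(L^*)^2\bigr)=\exp\!\bigl(-c'\,\ell^{2(1-\alpha)}/(\log\ell)^{1+\alpha}\bigr),
\]
which is super-polynomial in $\ell$ thanks to the hypothesis $\alpha<1$. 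Combining both contributions gives $q_\ell\leq C_L/\ell^{L+d}$ for $\ell$ large, and absorbing the finitely many remaining terms into the constant leads to
\[
\mathbb{P}(D_M^c)\leq n\sum_{\ell>2M}C_d\,\ell^{d-2}\cdot \frac{C_L}{\ell^{L+d}}\leq\frac{C(n,d,\varepsilon,\alpha,L)}{M^L}.
\]

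The main obstacle is securing the super-polynomial decay of $q_\ell$ in Step 2: the cone width $\varepsilon\ell^\alpha$ lies polynomially below the diffusive scale $\sqrt{T}\asymp\ell-M$ precisely because $\alpha<1$, and this gap is exactly what is required to beat the polynomial source-count $\ell^{d-2}$ in the union bound. This is the only place where the hypothesis $\alpha<1$ is used; the proof does not rely on Proposition~\ref{thm: global upper bound}, only on random walk estimates.
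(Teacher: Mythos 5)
Your proof is correct, but it follows a genuinely different route from the paper. The paper decomposes the complement of $D_M$ over annuli $\mathrm{Ann}(M,j)=\mathcal{H}_{(j+2)M}\setminus\mathcal{H}_{(j+1)M}$, controls the number of emitted particles per annulus by a Poisson concentration inequality (with a separate tweak for $d=2$, where the annulus cardinality does not grow), and bounds the single-walk probability by a donut-crossing argument: the cone between levels $(j+1)M$ and $M$ is covered by $k^o\gtrsim (jM)^{1-\alpha}/\varepsilon$ donuts, each crossed with probability at most $1-c$, $c=(2d)^{-2}$, an estimate imported from Proposition 3.1 of the companion paper \cite{chenavier2024idla}. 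You instead sum over exact levels $\ell>2M$, replace the concentration step by a first-moment (Wald-type) bound using only the mean $n$ of the Poisson clocks, and prove the single-walk estimate from scratch via a time splitting: Azuma--Hoeffding rules out hitting $\mathbb{Z}_M$ within $T\asymp(\ell-M)^2/\log\ell$ steps, while a one-dimensional confinement (small-ball) estimate for the first coordinate, combined with a maximal bound on the level, rules out remaining in the cone for $T$ steps, giving decay $\exp(-c\,\ell^{2(1-\alpha)}/\mathrm{polylog}(\ell))$, super-polynomial because $\alpha<1$ (the same place where the paper needs $\alpha<1$ to make $k^o$ grow). Your version is self-contained, handles all $d\geq 2$ uniformly without the $d=2$ adjustment, and in fact yields a stronger per-level decay than $(1-c)^{k^o}$; the paper's version is shorter given the external donut lemma and keeps the estimates aligned with those reused elsewhere (e.g.\ in Lemma \ref{lem: tail distribution}). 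One cosmetic remark: your constant necessarily depends on $L$ (through $c_L$), which literally deviates from the statement $C=C(d,n,\varepsilon,\alpha)$, but the paper's own proof (``decreases faster than any power of $M^{-1}$'') has exactly the same dependence, so this is the intended reading and not a gap.
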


The proof of Theorem \ref{thm: stab agg dagg} works in two steps. We first explain how to conclude from Lemma \ref{prop: particle stab} and then we prove this auxiliary result.

Let us pick $\varepsilon > 0$ and $\alpha \in (1-1/d, 1)$. Let $M,L \geq 1$. Let us consider the event $\mathcal{G}_M$ defined by
\[
\mathcal{G}_M := \Big\{ A^{\dagger}_n[\infty] \cap \mathbb{Z}_{M}^c \subset \mathscr{C}_{\varepsilon}^{\alpha} \Big\} \cap D_M ~.
\]
Thanks to Proposition \ref{thm: global upper bound} and Lemma \ref{prop: particle stab}, there exists a positive constant $C = C(d,n,\varepsilon,\alpha,L)$ such that $\mathbb{P}(\mathcal{G}_M) \geq 1-C M^{-L}$. Given $M' > 2M$, we consider the aggregates $A^{\dagger}_n[2M]$ and $A^{\dagger}_n[M']$ under the special coupling. Hence, a.s. $A^{\dagger}_n[2M]$ is included in $A^{\dagger}_n[M']$ and any element $x$ in $A^{\dagger}_n[M']\!\setminus\!A^{\dagger}_n[2M]$ is produced by a particle emitted from a source in $\mathcal{H}_{M'}\!\setminus\!\mathcal{H}_{2M}$ thanks to the condition $(\star)$. On the event $\mathcal{G}_M$, the random walk associated to this particle necessarily exited $\mathscr{C}_{\varepsilon}^{\alpha}$ before reaching $\mathbb{Z}_M$, and hence necessarily exited $A^{\dagger}_n[M']$ before reaching $\mathbb{Z}_M$. This means that, on the event $\mathcal{G}_M$, both aggregates $A^{\dagger}_n[2M]$ and $A^{\dagger}_n[M']$ coincide on $\mathbb{Z}_M$ and leads to
\begin{equation}
\label{eqn: stab agg dagg}
\mathbb{P} \left( A^{\dagger}_n[M'] \cap \mathbb{Z}_M = A^{\dagger}_n[2M] \cap \mathbb{Z}_M\right) \geq 1- \frac{C}{M^L} , \; \forall M' > 2M ~.
\end{equation}
The key argument here is the special coupling which ensures that any discrepancy between both aggregates is due to a single particle and not to a chain of changes.

We can now conclude. On the one hand, thanks to (\ref{eqn: stab agg dagg}),
\[
\liminf_{M'\to\infty} \mathbb{P} \left( A^{\dagger}_n[M'] \cap \mathbb{Z}_M \subset A^{\dagger}_n[2M] \cap \mathbb{Z}_M\right) \geq 1- \frac{C}{M^L} ~.
\]
On the other hand, the infinite aggregate $A^{\dagger}_n[\infty]$ being the limit of the increasing sequence $(A^{\dagger}_n[M'])_{M'}$ (thanks to the natural coupling), we have
\[
\lim_{M'\to\infty} \mathbb{P} \left( A^{\dagger}_n[M'] \cap \mathbb{Z}_M \subset A^{\dagger}_n[2M] \cap \mathbb{Z}_M\right) = \mathbb{P} \left( A^{\dagger}_n[\infty] \cap \mathbb{Z}_M \subset A^{\dagger}_n[2M] \cap \mathbb{Z}_M\right) ~.
\]
As a consequence, we can write
\[
\mathbb{P} \left( A^{\dagger}_n[\infty] \cap \mathbb{Z}_M \subset A^{\dagger}_n[2M] \cap \mathbb{Z}_M\right) \geq 1- \frac{C}{M^L}
\]
and the same holds for $\mathbb{P}(A^{\dagger}_n[\infty]\cap\mathbb{Z}_M = A^{\dagger}_n[2M]\cap\mathbb{Z}_M)$ since $A^{\dagger}_n[2M] \subset A^{\dagger}_n[\infty]$ with probability $1$. This concludes the proof of Theorem \ref{thm: stab agg dagg}.

\begin{proof}[Proof of Lemma \ref{prop: particle stab}]
Let $\varepsilon > 0$, $\alpha \in (0,1)$ and $M,j \geq 1$. Let us set
\[
E_{M,j} := \left\lbrace\begin{array}{c}
	\mbox{At least one random walk starting from $\mathrm{Ann}(M,j)$} \\
	\mbox{visits the strip $\mathbb{Z}_{M}$ before exiting $\mathscr{C}_{\varepsilon}^{\alpha}$} \\
\end{array}
\right\rbrace
\]
where $\mathrm{Ann}(M,j) := \mathcal{H}_{(j+2)M}\!\setminus\!\mathcal{H}_{(j+1)M}$. Hence, the event $D_M^c$ is equal to $\cup_{j \geq 1} E_{M,j}$ and we focus on bounding each term $\mathbb{P}(E_{M,j})$.

As in the proof of Theorem 1.2 of \cite{chenavier2024idla}, our strategy consists in building donuts from level $(j+1)M$ down to level $M$, symmetric w.r.t. the hyperplane $\mathcal{H}$ and containing the cone $\mathscr{C}_{\varepsilon}^{\alpha}$ (restricted to $\mathbb{Z}_{(j+1)M}\!\setminus\!\mathbb{Z}_M$). The largest donut is the one built at level $(j+1)M$. Its width is equal to $2\varepsilon((j+1)M)^{\alpha}$ and all the following donuts have smaller sizes. Therefore, the number of donuts $k^o = k^o(j,M,\varepsilon,\alpha)$ we can build from level $(j+1)M$ to level $M$ verifies 
\begin{equation}
	\label{eqn: value of k chap2}
	k^o \geq \frac{(j+1)M - M}{2\varepsilon(j+1)^{\alpha}M^{\alpha}} \geq \frac{(jM)^{1-\alpha}}{4\varepsilon},
\end{equation}
where the last inequality is due to $(j+1)^\alpha \leq 2j$.

We know that if a random walk sent from a level greater than $(j+1)M$ reaches the strip $\mathbb{Z}_{M}$ while staying inside the cone $\mathscr{C}_{\varepsilon}^{\alpha}$, it necessarily crossed over the $k^o$ donuts previously built. The probabilistic cost to cross any given donut while staying inside the cone is at most $1-c$ with $c := (2d)^{-2}$. So the probability for such random walk to cross the $k^o$ donuts before exiting $\mathscr{C}_{\varepsilon}^{\alpha}$ is at most $(1-c)^{k^o}$. See Proposition 3.1 of \cite{chenavier2024idla} for details.

Besides, let us denote by $N_{tot}^{(j)} = N_{tot}^{(j)}(d,n,M,j)$ the total number of particles sent from $\mathrm{Ann}(M,j)$ during the time interval $[0,n]$:
\[
N_{tot}^{(j)} := \sum_{z \in \mathrm{Ann}(M,j)} \mathcal{N}_z([0,n]) ~.
\]
The next result allows us to bound from above $N_{tot}^{(j)}$ with high probability.

\begin{lemma}
	\label{lemma: particle control ann}
	Let $M, j \geq 1$ and set $C_{M,j} := \# \mathrm{Ann}(M,j)$. Then,
	\[
	\mathbb{P} \big( N_{tot}^{(j)} > 2n C_{M,j} \big) \leq \exp\left(-c_0 j^{d-2}M^{d-1}\right),
	\]
	where $c_0 = c_0(d,n)$ denotes a positive constant.
\end{lemma}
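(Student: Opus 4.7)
The plan is direct: $N_{tot}^{(j)}$ is a sum of independent Poisson increments, so I can identify its law exactly and then apply a standard Chernoff bound.

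\textbf{Step 1 (identifying the law).} Since the PPPs $\{\mathcal{N}_z\}_{z \in \mathcal{H}}$ are independent with common intensity $1$, each count $\mathcal{N}_z([0,n])$ is Poisson with parameter $n$, independently in $z$. Summing over $z \in \mathrm{Ann}(M,j)$ gives $N_{tot}^{(j)} \sim \mathrm{Poisson}(n C_{M,j})$.

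\textbf{Step 2 (combinatorial lower bound on $C_{M,j}$).} Recall $\mathcal{H}_K = \{0\} \times \llbracket -K,K \rrbracket^{d-1}$, so $|\mathcal{H}_K| = (2K+1)^{d-1}$. Hence
\[
C_{M,j} = \bigl(2(j+2)M+1\bigr)^{d-1} - \bigl(2(j+1)M+1\bigr)^{d-1}.
\]
Using $b^{d-1}-a^{d-1} \geq (d-1)a^{d-2}(b-a)$ for $0 \le a \le b$, with $a = 2(j+1)M+1$, $b-a = 2M$, one gets $C_{M,j} \geq (d-1)\, 2M\, \bigl(2(j+1)M\bigr)^{d-2} \geq c_d\, j^{d-2}\, M^{d-1}$ for some $c_d > 0$ depending only on $d$ (using $j \geq 1$, so $j+1 \leq 2j$ and $(j+1)^{d-2} \geq j^{d-2}$).

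\textbf{Step 3 (Chernoff for Poisson).} For $X \sim \mathrm{Poisson}(\lambda)$, the classical Cramér--Chernoff bound yields $\mathbb{P}(X \geq t\lambda) \leq \exp\bigl(-\lambda \psi(t)\bigr)$ with $\psi(t) = t \ln t - t + 1$. Applied with $\lambda = n C_{M,j}$ and $t = 2$, we obtain
\[
\mathbb{P}\bigl( N_{tot}^{(j)} > 2n C_{M,j} \bigr) \leq \exp\bigl(- n C_{M,j} (2\ln 2 - 1)\bigr).
\]
Since $2\ln 2 - 1 > 0$, plugging in the lower bound from Step 2 gives
\[
\mathbb{P}\bigl( N_{tot}^{(j)} > 2n C_{M,j} \bigr) \leq \exp\bigl(- n (2\ln 2 - 1)\, c_d\, j^{d-2} M^{d-1}\bigr),
\]
which is the claimed estimate with $c_0 := n(2\ln 2 - 1) c_d$ depending only on $d$ and $n$.

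There is no real obstacle here: once one recognizes that $N_{tot}^{(j)}$ is exactly Poisson, the only content is the elementary lower bound on the cardinality of the discrete annulus (which produces the expected exponent $j^{d-2}M^{d-1}$, matching the fact that $\mathrm{Ann}(M,j)$ is a $(d-1)$-dimensional spherical shell of thickness $\sim M$ at distance $\sim jM$ from the origin) and the textbook Poisson Chernoff bound.
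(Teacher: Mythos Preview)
Your proof is correct and follows essentially the same approach as the paper: identify $N_{tot}^{(j)}$ as a Poisson random variable with parameter $nC_{M,j}$, apply a standard Poisson tail bound, and use that $C_{M,j}$ is of order $j^{d-2}M^{d-1}$. The only cosmetic difference is that the paper invokes the Bernstein-type inequality $\mathbb{P}(X-\lambda\geq t)\leq\exp\bigl(-t^2/(2(\lambda+t/3))\bigr)$ while you use the multiplicative Chernoff form $\mathbb{P}(X\geq 2\lambda)\leq\exp(-\lambda(2\ln 2-1))$; both yield the same conclusion.
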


\begin{proof}[Proof of Lemma \ref{lemma: particle control ann}]
	The searched inequality is a direct consequence of the concentration inequality for Poisson variables (\ref{eqn: concentration inequality poisson}) stated below, applied to $N_{tot}^{(j)}$ which is distributed as a Poisson random variable with parameter $n C_{M,j}$ and to the fact that $C_{M,j}$ is of order $j^{d-2} M^{d-1}$.
	
	If $X$ is a Poisson random variable of parameter $\lambda >0$, then for any $t\geq 0$, the following holds:
	\begin{equation}
		\label{eqn: concentration inequality poisson}
		\mathbb{P}\left(X-\mathbb{E}\left[X\right] \geq t \right) \leq \exp\left(-\frac{t^2}{2\left(\lambda + \frac{t}{3}\right)}\right).
	\end{equation}
\end{proof}

This result uses concentration inequality results from \cite{BoucheronS.2013CiAn}. If $X\sim \mathcal{P}(\lambda)$, then $X$ is sub-Poisson with variance factor $\lambda$ and scale factor $1$, and is hence sub-Gamma with variance factor $\lambda$ and scale factor $1/3$.
Finally, we use a result from that if $Z$ is sub-Gamma with variance factor $v>0$ and scale factor $c>0$, then for any $t>0$,
\[
\mathbb{P}\left(Z - \mathbb{E}[Z] \geq t \right) \leq \exp\left(- \frac{t^2}{2(v + ct)}\right).
\]

All the ingredients are gathered, we can compute:
\begin{eqnarray*}
	\mathbb{P}(E_{M,j}) & \leq & \mathbb{P} \big( E_{M,j} \cap \big\{ N_{tot}^{(j)} \leq 2n C_{M,j} \big\} \big) + \mathbb{P} \big( N_{tot}^{(j)} > 2n C_{M,j} \big) \\
	& \leq & \sum_{i=1}^{2nC_{M,j}} \mathbb{P} \big( \big\{ \text{walk $i$ visits $\mathbb{Z}_M$ before exiting $\mathscr{C}_{\varepsilon}^{\alpha}$} \big\} \big) + \exp \big( - c_0 j^{d-2} M^{d-1} \big) \\
	& \leq & \sum_{i=1}^{2nC_{M,j}} \mathbb{P} \big( \big\{ \text{walk $i$ crosses $k^o$ donuts before exiting $\mathscr{C}_{\varepsilon}^{\alpha}$} \big\} \big) + \exp \big( - c_0 j^{d-2} M^{d-1} \big) \\
	& \leq & 2n C_{M,j} (1-c)^{k^o} + \exp \big( - c_0 j^{d-2} M^{d-1} \big) ~.
\end{eqnarray*}
Thus, \eqref{eqn: value of k chap2} leads to
\[
2n C_{M,j} (1-c)^{k^o} \leq C_1 M^{d-1} (j+1)^{d-2} \exp\big( - C_2 (jM)^{1-\alpha} \big)
\]
where $C_1,C_2$ are positive constants depending only on $d,n,\varepsilon$. Summing over $j \geq 1$, we get
\begin{eqnarray*}
	\mathbb{P} \Big( \bigcup_{j\geq 1} E_{M,j} \Big) \, \leq \, \sum_{j\geq 1} \mathbb{P} (E_{M,j}) & \leq & \sum_{j\geq 1} C_1 M^{d-1}(j+1)^{d -2} \exp \big( - C_2 (jM)^{1-\alpha} \big) \\
	& & + \sum_{j\geq 1} \exp \big( -c_0 j^{d-2} M^{d-1} \big) ~.
\end{eqnarray*}
Since $1-\alpha >0$, both terms of the upper bound above are summable and decrease faster than any power of $M^{-1}$, which concludes the proof.

Remark that the previous conclusion holds only if $d\geq 3$. When $d=2$, we have to proceed slightly differently. Each $\mathrm{Ann}(M,j)$ has the same cardinality, equal to $M$. So, in the previous computation, it suffices to intersect the event $E_{M,j}$ with $\{N_{tot}^{(j)} \leq 2n M j^\beta\}$ (for some $\beta > 0$) since \eqref{eqn: concentration inequality poisson} allows to bound the probability of $\{N_{tot}^{(j)} > 2n M j^\beta\}$ by $\exp(-c'M j^\beta)$--which is summable w.r.t. $j$ and $M$.
\end{proof}

Theorem 4.1 of \cite{chenavier2024idla} is a weaker version of Proposition \ref{thm: global upper bound}: it gives the same result but in the particular case of a linear cone $\mathscr{C}_{\varepsilon}^1$, i.e. with $\alpha = 1$. Taking $\alpha = 1$ in the computation \eqref{eqn: value of k chap2} leads to a constant number of donuts $k^o$ (no longer depending on $j,M$) which prevents us to conclude as above. Hence, in \cite{chenavier2024idla}, in order to get sufficiently many donuts to make unlikely the crossing to $\mathbb{Z}_M$ for particles coming far away, we required more space. This argument led to a stabilization result (Theorem 1.2 of \cite{chenavier2024idla}) weaker than our Theorem \ref{thm: stab agg dagg} since it claimed that we need to go above levels $M^{\gamma}$, $\gamma > 1$, to stabilize $A^{\dagger}_n[\infty] \cap \mathbb{Z}_M$.

However, to apply with success the multiscale argument of Section \ref{sec: JB}, we need a stabilization result for $A^{\dagger}_n[\infty] \cap \mathbb{Z}_M$ requiring a linear number of levels of $M$ (rather than $M^\gamma$). This is why we had to improve the stabilization result Theorem 1.2 of \cite{chenavier2024idla} into Theorem \ref{thm: stab agg dagg}. This justifies the use of the thinner cone $\mathscr{C}_{\varepsilon}^\alpha$ leading to the refined global upper bound Proposition \ref{thm: global upper bound}.

%%%%%%%%%%%%%%%%%%%%%%%%%%%%%%%%
%%                   	      %%
%%     PERCOLATION MODEL  	  %%
%%                   	  	  %%
%%%%%%%%%%%%%%%%%%%%%%%%%%%%%%%%

\section{From chains of changes to percolation models}
\label{sec: chains to perco}

To get the stabilization result Theorem \ref{thm: forest stabilization}, we proceed by contradiction by assuming the \textit{Absurd hypothesis} (\ref{Absurd-Hypo}) that we recall now: there exist positive integers $n_0,K_0$ (fixed for the whole section) such that
\[
\forall N_0 , \, \exists N \geq N_0 , \, \fdag{n_0}[N] \cap \mathbb{Z}_{K_0} \neq \fdag{n_0}[N_0] \cap \mathbb{Z}_{K_0}
\]
occurs with positive probability.

In section \ref{sect:FirstPercoModel}, we use a space-time representation of a chain of changes between the forests $\fdag{n_0}[N_0]$ and $\fdag{n_0}[N]$ to describe the propagation of discrepancies between the corresponding aggregates $A^{\dagger}_{n_0}[N_0]$ and $A^{\dagger}_{n_0}[N]$. In Section \ref{sect:IDC}, a percolation model $\Sigma$ with good properties (Lemmas \ref{lem: tail distribution} and \ref{lem: finite intersections}) is introduced. Under the \textit{Absurd hypothesis}, we prove that $\Sigma$ percolates in the sense that it contains an \textit{infinite descending chain} with positive probability. Finally, in Section \ref{subsec: instant perco}, we take advantage of the monotonicity in time of such infinite descending chain in order to state that it actually appears instantaneously. The final result of Section \ref{sec: chains to perco} is summarized in Proposition \ref{prop:ConcSection3} saying that a discrete Boolean model $\hat{\Sigma}_{\varepsilon}$ percolates even if its intensity tends to $0$ with $\varepsilon \to 0$.

\medskip

For $z \in \mathcal{H}$ and $r \in \mathbb{N}$, let us set $\mathbb{B}(z,r) := z + \mathcal{H}_r$ the $(d-1)$-dimensional ball, included in $\mathcal{H}$, centered at $z$ and with radius $r$ (for the infinity norm). We also denote by $p_{\mathcal{H}} : \mathbb{Z}^d \to \mathcal{H}$ the orthogonal projection onto the hyperplane $\mathcal{H}$.

\subsection{A space-time representation of chains of changes}
\label{sect:FirstPercoModel}

Let $(N_0,N)$, with $N \geq N_0$, a couple of integers such that the forests $\fdag{n_0}[N]$ and $\fdag{n_0}[N_0]$ do not coincide on the strip $\mathbb{Z}_{K_0}$. From now on, we consider their vertex sets, namely the aggregates $A^{\dagger}_{n_0}[N]$ and $A^{\dagger}_{n_0}[N_0]$, under the natural coupling defined in Section \ref{subsec: couplings}, i.e. satisfying a.s. the inclusion $A^{\dagger}_{n_0}[N_0] \subset A^{\dagger}_{n_0}[N]$. A (random) space-time couple $(z,t)$ where $z$ is a source with $\| z \| \leq N$ and $t \in [0,n_0]$ is a `top' of the \textsc{ppp} $\mathcal{N}_z$, is called a \textit{starting point}. It means that, at time $t$, a particle using the random walk $S_{z,t} = (S_{z,t}(k))_{k\geq 0}$ and launched from the source $z$, contributes to the construction of $A^{\dagger}_{n_0}[N]$--and also of $A^{\dagger}_{n_0}[N_0]$ if $\| z \| \leq N_0$. As explained in the introduction, the difference between both forests $\fdag{n_0}[N]$ and $\fdag{n_0}[N_0]$ inside $\mathbb{Z}_{K_0}$ is due to a chain of changes, i.e. a sequence of $\kappa \geq 1$ particles coming from starting points $(z_i, t_i)_{1\leq i \leq \kappa}$ satisfying the following conditions:
\begin{equation}
\label{ChainChanges}
\left\{
\begin{array}{l}
	\text{$N_0 < \|z_1\| \leq N$ and $\|z_i\| \leq N_0$, for $2 \leq i \leq \kappa$} \\
	\text{$0 < t_1 < t_2 < \dots < t_{\kappa} < n_0$} \\
	\text{for $1 \leq i \leq \kappa-1$, the $i$-th particle is relayed by the $(i+1)$-th one} \\
	\text{the $\kappa$-th particle exits $A^{\dagger}_{t_{\kappa}^-}[N]$ through $\mathbb{Z}_{K_0}$}
\end{array}
\right. 
\end{equation}
Recall that `the $i$-th particle is relayed by the $(i+1)$-th one' means that the discrepancy $x_i \in A^{\dagger}_{t_i}[N] \setminus A^{\dagger}_{t_i}[N_0]$ created by the $i$-th particle at time $t_i$, is visited by the $(i+1)$-th particle at time $t_{i+1}$, which  contributes to both aggregates. So, at time $t_{i+1}$, $x_i$ is no longer a discrepancy and is replaced with a new one $x_{i+1} \in A^{\dagger}_{t_{i+1}}[N] \setminus A^{\dagger}_{t_{i+1}}[N_0]$ which actually is the site at which the $(i+1)$-th particle settles when it exits the current aggregate $A^{\dagger}_{t_{i+1}^-}[N]$.

Associated with a given starting point $(z,t)$ and with the corresponding particle, we define the radius $R_{N}(z,t)$ as follows:
\[
R_{N}(z,t) := \min \big\{ r \in \mathbb{N} : \text{$\mathbb{B}(z,r)$ contains $p_{\mathcal{H}}(S_{z,t}(0)),p_{\mathcal{H}}(S_{z,t}(1)), \ldots ,p_{\mathcal{H}}(S_{z,t}(\tau))$} \big\}
\]
where
\[
\tau = \tau(z,t,N) := \min\{ k : S_{z,t}(k) \notin A^{\dagger}_{t^-}[N]\}
\]
denotes the time at which the particle moving according to $S_{z,t}$ exits the current aggregate $A^{\dagger}_{t^-}[N]$. In other words, the ball $\mathbb{B}(z,R_{N}(z,t))$ contains the part of the projected trajectory $p_{\mathcal{H}}(S_{z,t})$ until $S_{z,t}$ exits $A^{\dagger}_{t^-}[N]$. It is worth pointing out here that $R_{N}(z,t)$ only depends on the random walk $S_{z,t}$ and the current aggregate $A^{\dagger}_{t^-}[N]$.

Now, let us come back to the sequence of $\kappa \geq 1$ particles satisfying (\ref{ChainChanges}). The fact that the $i$-th particle is relayed by the $(i+1)$-th one means that
\[
\mathbb{B}(z_i , R_{N}(z_i,t_i)) \cap \mathbb{B}(z_{i+1} , R_{N}(z_{i+1},t_{i+1})) \not= \emptyset ~.
\]

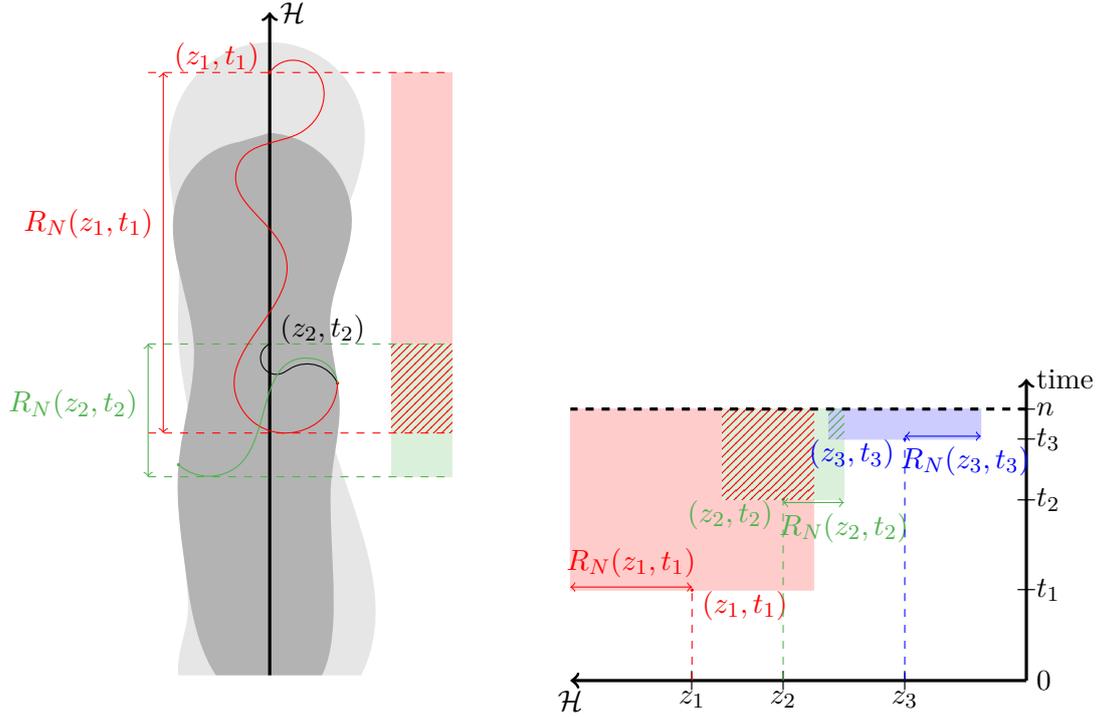
\begin{figure}[!ht]
\centering
\begin{tikzpicture}[scale=0.4, use Hobby shortcut,closed=true]
	\fill [color=gray!20](2, -1) .. (3,-9) .. (-3, -9) .. (-2.7, -7) .. (-2.8, -5) .. (-2.7, -4) .. (-2.5, -2) .. (-3,2) .. (-3, 5) .. (-3, 10) .. (0, 12) ..  (3, 9.8) .. (2, 5);
	
	\fill [color=gray!60] (0,9) .. (2,8) .. (2.7, 6) ..  (2,3) .. (2.3, 0) .. (2, -3) .. (0, -10) .. (-2.7, -6) .. (-3, -2) .. (-2.5, 2) .. (-3, 7) .. (-2, 8.3) .. (-1, 8.7) ;
	
	\fill [color=red!20](4,11) rectangle (6,-0.95);
	\fill [color=green!40!gray!20](4,2) rectangle (6,-2.4);
	\fill[pattern = north east lines, pattern color= red] (4,2) rectangle (6, -0.95);
	
	\draw (0,13) node[right] {$\mathcal{H}$};
	\draw [->,very thick] (0,-10) -- (0,13);
	
	\draw [red, dashed]  (-4,11) -- (6,11);	
	\draw [<->, red] (-3.5, -0.95) -- (-3.5, 11);
	\draw [red, dashed]  (-4,-0.95) -- (6,-0.95);
	\draw [red] (-3.5,6) node[left] {$R_{N}(z_1,t_1)$};
	
	\draw [green!40!gray, dashed]  (-4,2) -- (6,2);	
	\draw [<->, green!40!gray] (-4, 2) -- (-4, -2.4);
	\draw [color = green!40!gray, dashed]  (-4,-2.4) -- (6,-2.4);
	\draw [green!40!gray] (-4, 0) node[left] {$R_{N}(z_2,t_2)$};
	
	\draw [red] (0,11.5) node[left] {$(z_1,t_1)$};
	\draw [black] (0,2.5) node[right] {$(z_2,t_2)$};
	
	\filldraw [red] (0, 11) circle (1pt);
	\filldraw [red] (2.23, 0.7) circle (1pt);
	\filldraw [green!40!gray] (-3, -2) circle (1pt);
	\filldraw [black] (0, 2) circle (1pt);
	
	\draw [red] (0,11)to[curve through={(1.2,11.3) .. (1,9).. (-1, 8) .. (0.5, 5) .. (-1, 0)}](2.23, 0.7); 
	\draw [black] (0,2)to[curve through={(0.3,1) .. (0.7, 1.2) }](2.23, 0.7); 
	
	\draw [green!40!gray] (2.23, 0.7)to[curve through={(1.5, 1.5) .. (-1, -2) }](-3, -2); 
	
	\filldraw [white] (-3, -9) rectangle (3, -12);
\end{tikzpicture}
\hspace*{1cm}
\begin{tikzpicture}[scale= 0.4]
	%\draw (0,0) grid (15 ,10);
	\filldraw [white] (0, -3) rectangle (15,0);
	\draw [->,very thick] (15, 0) -- (15, 10);
	\draw [<-,very thick] (0, 0) -- (15, 0);
	\draw (0,0) node[below] {$\mathcal{H}$};
	\draw (15,10) node[right] {time};
	\draw (15,0) node[right] {$0$};
	\draw (15,9) node {$-$};
	\draw (15,8) node {$-$};
	\draw (15,6) node {$-$};
	\draw (15,3) node {$-$};
	\draw (15,9) node[right] {$n$};
	\draw (15,8) node[right] {$t_3$};
	\draw (15,6) node[right] {$t_2$};
	\draw (15,3) node[right] {$t_1$};
	\draw (4, 0) node {$+$};
	\draw (4,0) node[below] {$z_1$};
	\filldraw [color = red!20] (0, 3) rectangle (8, 9);
	\filldraw [red] (4, 3) circle (1pt);
	\draw [red] (4,2.5) node[right] {$(z_1,t_1)$};
	\draw [red, dashed] (4, 0) -- (4,3) ;
	\draw [red, <->] (0, 3.1) -- (4,3.1) ;
	\draw [red] (2, 3.1) node[above] {$R_{N}(z_1,t_1)$};

	\draw (7, 0) node {$+$};
	\draw (7,0) node[below] {$z_2$};
	\filldraw [color = green!40!gray!20] (5,6) rectangle (9, 9);
	\filldraw [green!40!gray] (7, 6) circle (1pt);
	\draw [green!40!gray] (7,5.5) node[left] {$(z_2,t_2)$};
	\draw [green!40!gray, dashed] (7, 0) -- (7,6) ;
	\draw [green!40!gray, <->] (7, 5.9) -- (9,5.9) ;
	\draw [green!40!gray] (9, 5.9) node[below] {$R_{N}(z_2,t_2)$};
	
	\fill[pattern = north east lines, pattern color= red] (5,6) rectangle (8, 9);
	
	\draw (11, 0) node {$+$};
	\draw (11,0) node[below] {$z_3$};
	\filldraw [color = blue!20] (8.5, 8) rectangle (13.5 , 9);
	\filldraw [blue] (11, 8) circle (1pt);
	\draw [blue] (11,7.5) node[left] {$(z_3,t_3)$};
	\draw [blue, dashed] (11, 0) -- (11,8) ;
	\draw [blue, <->] (11, 8.1) -- (13.5, 8.1) ;
	\draw [blue] (13, 8.1) node[below] {$R_{N}(z_3,t_3)$};
	
	\fill[pattern = north east lines, pattern color= green!40!gray] (8.5, 8) rectangle (9, 9);
	
	\draw [dashed, very thick] (15, 9) -- (0, 9);
	
\end{tikzpicture}
\vspace{-0.5cm}
\caption{Representations of a chain of changes}
\label{fig:ChainChanges}
\end{figure}

Figure \ref{fig:ChainChanges} properly illustrates our argument. On the left hand side, aggregates $A^{\dagger}_n[N_0]$ and $A^{\dagger}_n[N]$ are respectively in dark and light gray--the first one being included in the second one according to the natural coupling. The trajectory of the first particle, starting at $(z_1, t_1)$, is depicted in red. Since $\| z_1 \| > N_0$, it works only for the larger aggregate and creates a first discrepancy $x_1$. The trajectory of the second particle, starting at $(z_2, t_2)$, is depicted in black and green. This particle works for both aggregates until it visits $x_1$ for the first time (the black path): $x_1$ is then added to the smaller aggregate and is no longer a discrepancy. Thus, the second particle continues its trajectory (the green path) but now working only for the larger aggregate: it creates a new discrepancy $x_2$ between both aggregates. Note that the radii $R_{N}(z_1,t_1)$ and $R_{N}(z_2,t_2)$ are represented on the left hand side of Figure \ref{fig:ChainChanges}. The hatched area emphasizes the fact that the balls $\mathbb{B}(z_1,R_{N}(z_1,t_1))$ and $\mathbb{B}(z_2,R_{N}(z_2,t_2))$ overlap.

This phenomenon is perhaps better visualized in the right hand side of Figure \ref{fig:ChainChanges} where the extra time parameter is taken into account. Remark that not only the colored rectangles have to intersect, but they need to do so with respect to increasing times. This additional constraint will turn out to be crucial in our proof.\\

\textbf{Conclusion.} Given $(N_0,N)$ such that the forests $\fdag{n_0}[N]$ and $\fdag{n_0}[N_0]$ do not coincide on $\mathbb{Z}_{K_0}$, we can assert that there exists a sequence of $\kappa \geq 1$ particles coming from starting points $(z_i, t_i)_{1\leq i \leq \kappa}$ and satisfying the following conditions:
\begin{equation}
\label{ChainChanges2}
\left\{
\begin{array}{l}
	\text{$N_0 < \|z_1\| \leq N$ and $\|z_i\| \leq N_0$, for $2 \leq i \leq \kappa$} \\
	\text{$0 < t_1 < t_2 < \dots < t_{\kappa} < n_0$} \\
	\text{for $1 \leq i \leq \kappa-1$, $\mathbb{B}(z_i,R_{N}(z_i,t_i)) \cap \mathbb{B}(z_{i+1},R_{N}(z_{i+1},t_{i+1})) \not= \emptyset$} \\
	\text{$\mathbb{B}(z_\kappa,R_{N}(z_\kappa,t_\kappa)) \cap \mathbb{Z}_{K_0} \not= \emptyset$}
\end{array}
\right. 
\end{equation}

At this stage, the radii $R_{N}(z_i,t_i)$'s we consider are far from easy to handle, as the law of $R_{N}(z_i,t_i)$ strongly depends on the shape of $A^{\dagger}_{t_i}[N]$ as well as the starting point $(z_i,t_i)$, which is random. Consequently, the radii $R_{N}(z_i,t_i)$'s are neither independent, nor identically distributed. To overcome this latter obstacle, we will replace in the next section the aggregate $A^{\dagger}_{t_{i}^-}[N]$ involved in the definition of $R_{N}(z_i,t_i)$ with the larger aggregate $A^{\dagger}_T[\infty]$ (with $T \geq n_0$) whose distribution is translation invariant.

\subsection{Existence of infinite descending chains under the \textit{Absurd hypothesis}}
\label{sect:IDC}

Let $T \geq n_0$. Associated to the starting point $(z,t)$, with $z \in \mathcal{H}$ and $t \in [0,n_0]$, let us introduce the random radius $R((z,t),T)$ defined by
\[
R((z,t),T) := \min \big\{ r \in \mathbb{N} : \text{$\mathbb{B}(z,r)$ contains $p_{\mathcal{H}}(S_{z,t}(0)),p_{\mathcal{H}}(S_{z,t}(1)), \ldots ,p_{\mathcal{H}}(S_{z,t}(\tau'))$} \big\}
\]
where
\[
\tau' = \tau'(z,t,N) := \min\{ k : S_{z,t}(k) \notin A^{\dagger}_{T}[\infty]\}
\]
denotes the time at which the particle moving according to $S_{z,t}$ exits the infinite aggregate $A^{\dagger}_{T}[\infty]$. The new radius $R((z,t),T)$ is defined similarly as $R_{N}(z,t)$, but we are now considering the trajectory of $S_{z,t}$ until it exits $A^{\dagger}_T[\infty]$ rather than $A^{\dagger}_{t^-}[N]$. Growing the aggregate both in time and space, from $A^{\dagger}_{t^-}[N]$ to $A^{\dagger}_T[\infty]$, we then a.s have
\[
R_{N}(z,t) \leq R((z,t),T) ~.
\]

Hence, given $(N_0,N)$ such that the forests $\fdag{n_0}[N]$ and $\fdag{n_0}[N_0]$ do not coincide on $\mathbb{Z}_{K_0}$, we get the existence of a sequence of $\kappa \geq 1$ particles coming from starting points $(z_i, t_i)_{1\leq i \leq \kappa}$ and satisfying the following conditions:
\begin{equation}
\label{ChainChanges3}
\left\{
\begin{array}{l}
	\text{$N_0 < \|z_1\|$} \\
	\text{$0 < t_1 < t_2 < \dots < t_{\kappa} < n_0$} \\
	\text{for $1 \leq i \leq \kappa-1$, $\mathbb{B}(z_i,R((z_i,t_i),T)) \cap \mathbb{B}(z_{i+1},R((z_{i+1},t_{i+1}),T)) \not= \emptyset$} \\
	\text{$\mathbb{B}(z_\kappa,R((z_\kappa,t_\kappa),T)) \cap \mathbb{Z}_{K_0} \not= \emptyset$}
\end{array}
\right. 
\end{equation}

So, we now consider the space-time percolation model $\Sigma = \Sigma(n_0,T)$ defined as the collection of balls $\mathbb{B}(z,R((z,t),T))$, for any starting point $(z,t)$ with $z \in \mathcal{H}$ and $t \in [0,n_0]$. Note also that considering larger radii, i.e. replacing $R_{N}(z,t)$ with $R((z,t),T)$, all dependency on the parameter $N$ disappears in (\ref{ChainChanges3}) and this allows us to take advantage of the stationarity property of $A^{\dagger}_T[\infty]$, which will greatly facilitate the study of $\Sigma$.

In the rest of this section, we state two properties about the percolation model $\Sigma$; a finite moment property for its radii (Lemma \ref{lem: tail distribution}) and a finite degree property (Lemma \ref{lem: finite intersections}).

\medskip

Let us start by introducing the random radius $R_T(z)$, for $z \in \mathcal{H}$ and $T \geq n_0$, defined as
\begin{equation}
\label{eqn: R_T def}
R_T(z) := \max_{t \in \mathcal{N}_z([0,T])} R((z,t),T)
\end{equation}
where $\mathcal{N}_z$ denotes the \textsc{ppp} with intensity 1 associated to the source $z$. Also, $t \in \mathcal{N}_z([0,T])$ means that $t$ is a `top' of the \textsc{ppp} $\mathcal{N}_z$ occurring in $[0,T]$. Since the aggregate $A^{\dagger}_T[\infty]$ is translation invariant in distribution (w.r.t. translations of $\mathcal{H}$, see Proposition \ref{prop: invariance}) then all the radii $R_T(z)$, $z \in \mathcal{H}$, have the same distribution. Setting $R_T := R_T(0)$, the next result states that the $R_T(z)$'s admit finite moments.
\begin{lemma}
\label{lem: tail distribution}
Let $T \geq n_0$ and $L > 0$ be real numbers. There exists a constant $C > 0$ such that for any integer $M$,
\[
\mathbb{P} \big( R_T \geq M \big) \leq C M^{-L} ~.
\]
In particular $\mathbb{E}[(R_T)^k]$ is finite for any integer $k$.
\end{lemma}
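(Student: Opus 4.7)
My plan reduces the problem to a quantitative random walk estimate via the Global Upper Bound (Proposition \ref{thm: global upper bound}). Fix $\alpha \in (1 - 1/d, 1)$ and $\varepsilon > 0$, and set $M_0 := \lfloor M/2 \rfloor$. By Proposition \ref{thm: global upper bound}, the event $\mathcal{G}_M := \{A^{\dagger}_T[\infty] \cap \mathbb{Z}_{M_0}^c \subset \mathscr{C}_\varepsilon^\alpha\}$ has $\mathbb{P}(\mathcal{G}_M^c) \leq C_1 M^{-L}$. On $\mathcal{G}_M$, the aggregate is contained in the deterministic set $\mathcal{D}_M := \mathbb{Z}_{M_0} \cup \mathscr{C}_\varepsilon^\alpha$, so each walk $S_{0,j}$ exits the aggregate no later than its first exit time from $\mathcal{D}_M$. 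Consequently, on $\mathcal{G}_M$, $R((0,\tau_{0,j}), T) \leq \tilde{R}(S_{0,j})$, where $\tilde{R}(S)$ denotes the maximal hyperplane norm reached by $S$ before it leaves $\mathcal{D}_M$; crucially, $\tilde{R}$ depends only on the walk, not on the aggregate.

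Using that the walks $(S_{0,j})_{j \geq 1}$ are i.i.d.\ and independent of the Poisson clock $\mathcal{N}_0$ of intensity $1$, Campbell's formula yields
\[
\mathbb{P}\!\left(\max_{j \,:\, \tau_{0,j} \leq T} \tilde{R}(S_{0,j}) \geq M\right) \leq T \, \mathbb{P}(\tilde{R}(S) \geq M),
\]
where $S$ denotes a generic simple random walk from the origin. Combined with the previous step, it then suffices to show $\mathbb{P}(\tilde{R}(S) \geq M) \leq C_L M^{-L}$ for any $L > 0$.

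For this walk estimate, let $\sigma_M := \inf\{k : \|p_{\mathcal{H}}(S(k))\| \geq M\}$. On $\{\tilde{R}(S) \geq M\}$, the walk lies in $\mathcal{D}_M$ throughout $[0,\sigma_M]$, and in particular $|S_1(\sigma_M)| \leq \varepsilon M^\alpha$ since $S(\sigma_M)$ is in the cone. Let $\Delta$ denote the duration of the last excursion of $S$ in the cone region $\{\|p_{\mathcal{H}}\| > M_0\}$ prior to $\sigma_M$; during this excursion $|S_1|$ must remain in $[-\varepsilon M^\alpha, \varepsilon M^\alpha]$ and the hyperplane projection has to cover a distance $\geq M/2$. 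Fix $\eta \in (0, 2(1-\alpha))$ and split on $\Delta$: if $\Delta \leq M^{2\alpha + \eta}$, the $(d-1)$-dimensional hyperplane projection of $S$ traverses distance $\geq M/2$ within at most $M^{2\alpha + \eta}$ steps, which by reflection and the Gaussian tail of SRW has probability $\leq C \exp(-c M^{2 - 2\alpha - \eta})$; if $\Delta > M^{2\alpha + \eta}$, then by binomial concentration the number of $e_1$-active steps during the excursion is at least $c M^{2\alpha + \eta}/d$ up to a super-polynomially small failure, and the classical strip-confinement estimate for 1D SRW yields that $|S_1|$ stays in $[-\varepsilon M^\alpha, \varepsilon M^\alpha]$ with probability at most $C \exp(-c' M^\eta / \varepsilon^2)$. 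Both bounds decay faster than any polynomial in $M$.

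The main obstacle is precisely this random walk estimate. A naive single-time bound on $\mathbb{P}(|S_1(\sigma_M)| \leq \varepsilon M^\alpha)$ (using only the local CLT at the hitting time $\sigma_M$) provides at best decay of a fixed polynomial order, insufficient for arbitrary $L$. The split on the excursion duration $\Delta$ is essential: it exploits simultaneously the rarity of very fast hyperplane travel (a consequence of $\alpha < 1$) and the rarity of long confinement of a 1D SRW in a thin strip of width $\varepsilon M^\alpha$. The strict refinement $\alpha < 1$ in Proposition \ref{thm: global upper bound} (as opposed to $\alpha = 1$ in \cite{chenavier2024idla}) is precisely what both halves of this argument require.
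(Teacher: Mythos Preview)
Your overall strategy matches the paper's closely: both reduce to a random-walk estimate via the Global Upper Bound, and both exploit $\alpha<1$ crucially. Your use of Campbell's formula to handle the Poisson maximum is a clean alternative to the paper's explicit truncation $\{\#\mathcal{N}_0([0,T])\le M\}$.

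The difference lies in the walk estimate itself. The paper does \emph{not} decompose on the last excursion; instead it observes that on $\{R((0,t),T)>2M\}\cap\Over^{\dagger}_{\alpha}(M,T,\varepsilon)^c$ the walk must traverse levels $M$ to $2M$ while inside $\mathscr{C}_\varepsilon^\alpha$, and then invokes the \emph{donut argument} (as in the proof of Lemma~\ref{prop: particle stab}): one fits $k^o\ge M^{1-\alpha}/(2^{1+\alpha}\varepsilon)$ disjoint slabs between these levels, and at each slab entrance (a genuine stopping time) the walk exits the cone with probability at least $(2d)^{-2}$, yielding $(1-(2d)^{-2})^{k^o}=\exp(-c_0 M^{1-\alpha})$.

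Your last-excursion split, by contrast, has a gap as written: the start time $\tau_0$ of the last excursion before $\sigma_M$ is \emph{not} a stopping time (it depends on the future through $\sigma_M$), so neither the Gaussian tail in Case~1 nor the strip-confinement bound in Case~2 applies directly. One needs a union bound over the possible locations of this excursion, and without a polynomial bound on $\sigma_M$ (or on the number of upcrossings from level $M_0$), this union bound is over an uncontrolled index set. The fix is routine---e.g.\ $\mathbb{P}(\sigma_M>M^K)\le C M^{1-K/2}$ from the one-coordinate projection, after which a union over $\le M^K$ candidate start times costs only a polynomial factor against your stretched-exponential bounds---but it should be stated. The paper's donut argument avoids this entirely by working with stopping times at each slab boundary.
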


\begin{proof}
Let $M \geq T$ be a positive integer, fix $\alpha \in (1-1/d,\ 1)$ and $\varepsilon >0$. Let us first write:
\begin{align}
	\label{eqn: R moments 2}
	\mathbb{P} \big( R_T \geq 2M \big) & \leq \mathbb{P} \left( R_T \geq 2M,\ \#\mathcal{N}_0([0,T]) \leq M ,\ \Over^{\dagger}_{\alpha}(M,T,\varepsilon)^c \right) \nonumber \\
	&+ \mathbb{P} \left( \#\mathcal{N}_0([0,T]) > M \right) + \mathbb{P} \left( \Over^{\dagger}_{\alpha}(M,T,\varepsilon) \right) ~,
\end{align}
where $\Over^{\dagger}_{\alpha}(M,T,\varepsilon) := \{A^{\dagger}_T[\infty] \cap \mathbb{Z}_M^c \nsubset \mathscr{C}_{\varepsilon}^{\alpha}\}$.
We know from Proposition \ref{thm: global upper bound} that the probability of $\Over^{\dagger}_{\alpha}(M,T,\varepsilon)$ is smaller than any power of $M^{-1}$ provided $M$ is sufficiently large. Additionally, we also know that $\mathbb{P}\left(\#\mathcal{N}_0([0,T]) > M\right)$ decreases faster than any power of $M^{-1}$, given $M$ is large enough. It then remains to focus on the first term of \eqref{eqn: R moments 2}. Hence,
\begin{multline*}
	\mathbb{P}\left( R_T \geq 2M,\ \#\mathcal{N}_0([0,T]) \leq M,\ \Over^{\dagger}_{\alpha}(M,T,\varepsilon)^c \right) \\
	\begin{split}
		&=\mathbb{P}\left(\exists t \in \mathcal{N}_0([0,T]),\ R((0,t), T) \geq 2M,\ \#\mathcal{N}_0([0,T]) \leq M,\ \Over^{\dagger}_{\alpha}(M,T,\varepsilon)^c \right) \\
		&\leq \mathbb{E}\left[ \sum_{t \in \mathcal{N}_0([0,T])} \mathbf{1}_{R((0,t), T) > 2M} \mathbf{1}_{\#\mathcal{N}_0([0,T]) \leq M}\mathbf{1}_{\Over^{\dagger}_{\alpha}(M,T,\varepsilon)^c} \right] \\
		%			& = \mathbb{E}\left[ \mathbb{E}\left[ \sum_{t \in \mathcal{N}_0([0,T])} \mathbf{1}_{R((0,t), T) > 2M}\mathbf{1}_{\#\mathcal{N}_0([0,T]) \leq M}\mathbf{1}_{\Over^{\dagger}_{\alpha}(M,T,\varepsilon)^c} \; | \; \mathcal{N}_0([0,T]) \right] \right] \\
		&= \mathbb{E}\left[\mathbf{1}_{\#\mathcal{N}_0([0,T]) \leq M}\sum_{t \in \mathcal{N}_0([0,T])} \mathbb{P}\left({R((0,t), T) > 2M},\ \Over^{\dagger}_{\alpha}(M,T,\varepsilon)^c \; | \; \mathcal{N}_0([0,T]) \right) \right].
	\end{split}
\end{multline*}
Now, the event $\{ R((0,t), T) > 2M \}$ implies that the random walk traveled a distance at least $2M$, and in particular, that it traveled from levels $M$ to $2M$, while staying contained inside $A^{\dagger}_T[\infty]$. Now, working on $\Over^{\dagger}_{\alpha}(M,T,\varepsilon)^c$ implies that $A_T[\infty]$ is contained inside $\mathscr{C}_{\varepsilon}^{\alpha}$ above levels $M$. Hence working on both events implies that the random walk traveled from level $M$ to level $2M$, all while staying contained inside $\mathscr{C}_{\varepsilon}^{\alpha}$. By taking $j=1$ in \eqref{eqn: value of k chap2}, we know that the number of boxes between levels $M$ and $2M$, denoted by $k^o$, is such that 
\[
k^o \geq \frac{M^{1-\alpha}}{4\varepsilon}
\]
Hence, using a box argument similar to the one used in the proof of Theorem \ref{thm: stab agg dagg}, we can show that: 
\begin{align*}
	\sum_{t \in \mathcal{N}_0([0,T])} \mathbb{P}\left({R((0,t), T) > 2M},\ \Over^{\dagger}_{\alpha}(M,T,\varepsilon)^c \; | \; \mathcal{N}_0([0,T]) \right) &\leq \sum_{t \in \mathcal{N}_0([0,T])} \left(1-\frac{1}{4d^2}\right)^{k^o} \\
	&=\#\mathcal{N}_0([0,T])\exp\left(-c_0 M^{1-\alpha}\right),
\end{align*}
where $c_0 =-\frac{1}{4}\ln\left(1-\frac{1}{4d^2}\right)> 0$.
Hence, 
\begin{align*}
	\mathbb{P}\left( R_T \geq 2M,\ \#\mathcal{N}_0([0,T]) \leq M,\ \Over^{\dagger}_{\alpha}(M,T,\varepsilon)^c \right) &\leq \mathbb{E}\left[ \#\mathcal{N}_0([0,T])e^{-c_0M^{1- \alpha}} \mathbf{1}_{\#\mathcal{N}_0([0,T]) \leq M} \right]  \\
	& \leq M\exp(-c_0M^{1-\alpha}) ~,
\end{align*}
which decreases faster than any power of $M^{-1}$. Finally, we have proved that $\mathbb{P}(R_T \geq 2M)$ decreases faster than any power of $M^{-1}$ from which one easily concludes.
\end{proof}
Lemma \ref{lem: tail distribution} paves the way to a finite degree property for the percolation model $\Sigma = \Sigma(n_0,T)$. Precisely, any given ball $\mathbb{B}(z,R((z,t),T))$ of $\Sigma$ overlaps only a finite number of other balls $\mathbb{B}(z',R((z',t'),T))$ of $\Sigma$, with probability $1$.

\begin{lemma}
\label{lem: finite intersections}
Let $T \geq n_0$ and $z \in \mathcal{H}$. Then, with probability one, for any starting point $(z,t)$ with $t \leq n_0$, the number of starting points $(z',t')$, with $z' \in \mathcal{H}$ and $t' \leq n_0$, such that $\mathbb{B}(z,R((z,t),T)) \cap \mathbb{B}(z',R((z',t'),T)) \not= \emptyset$ is finite.
\end{lemma}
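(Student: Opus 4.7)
The plan is to deduce the lemma from the moment bound of Lemma~\ref{lem: tail distribution} via a first Borel--Cantelli argument, exploiting the translation invariance of $A^{\dagger}_T[\infty]$ given by Proposition~\ref{prop: invariance}. Since $\mathcal{N}_z$ is a Poisson point process of intensity $1$, almost surely only finitely many starting points $(z,t)$ with $t \leq n_0$ exist. Moreover, every ball $\mathbb{B}(z,R((z,t),T))$ is contained in $\mathbb{B}(z,R_T(z))$, and the same holds on the $z'$ side by the definition~\eqref{eqn: R_T def}. It therefore suffices to prove that, for a fixed $z \in \mathcal{H}$, the set
\[
\mathcal{I}(z) := \bigl\{ z' \in \mathcal{H} : \mathbb{B}(z,R_T(z)) \cap \mathbb{B}(z',R_T(z')) \neq \emptyset \bigr\}
\]
is almost surely finite.

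I would then observe that two such balls meet if and only if $\|z-z'\| \leq R_T(z) + R_T(z')$, which forces $\max(R_T(z),R_T(z')) \geq \|z-z'\|/2$. Since Lemma~\ref{lem: tail distribution} already guarantees $R_T(z) < \infty$ a.s., the $z'$'s in $\mathcal{I}(z)$ satisfying the first alternative $R_T(z) \geq \|z-z'\|/2$ form a (random) finite set. The only remaining task is thus to show that almost surely only finitely many $z' \in \mathcal{H}$ satisfy $R_T(z') \geq \|z-z'\|/2$.

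For this I would apply the first Borel--Cantelli lemma. By the translation invariance of $A^{\dagger}_T[\infty]$ (combined with the i.i.d.\@ nature of the random inputs attached to the different sources), each $R_T(z')$ is distributed as $R_T = R_T(0)$. Lemma~\ref{lem: tail distribution} then provides, for any $L > 0$, a constant $C$ with $\mathbb{P}(R_T \geq M) \leq C M^{-L}$. Choosing $L > d-1$ and noting that the number of $z' \in \mathcal{H}$ at sup-norm distance $M$ from $z$ is $O(M^{d-2})$, one obtains
\[
\sum_{z' \in \mathcal{H} \setminus \{z\}} \mathbb{P}\bigl( R_T(z') \geq \|z-z'\|/2 \bigr) \;\leq\; C' \sum_{M \geq 1} M^{d-2-L} \;<\; \infty,
\]
so that Borel--Cantelli yields the desired finiteness of the second contribution, and hence of $\mathcal{I}(z)$. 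I do not anticipate any serious obstacle: once the tail estimate Lemma~\ref{lem: tail distribution} is in hand, the whole argument is a routine first-moment computation. The only mild subtlety is that the $R_T(z')$'s are \emph{not} independent (they all depend on the common aggregate $A^{\dagger}_T[\infty]$), but since the first Borel--Cantelli lemma requires only summability of probabilities, invariance in distribution is all we need.
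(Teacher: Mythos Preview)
Your proof is correct and follows essentially the same approach as the paper: both reduce to the larger radii $R_T(\cdot)$, exploit translation invariance together with the tail bound of Lemma~\ref{lem: tail distribution}, and conclude via summability of $\sum_{\ell} \ell^{d-2}\,\mathbb{P}(R_T \geq \ell/2)$. The only cosmetic difference is that the paper phrases the final step as finiteness of the \emph{expected} number of intersecting sources rather than as a Borel--Cantelli argument after your case split, but the underlying computation is identical.
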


\begin{proof}
Since the radius $R_T(z)$ is by definition larger than any $R((z,t),T)$--see (\ref{eqn: R_T def})--it is enough to prove that a.s.
\[
\# \big\{ z' \in \mathcal{H} : \, \mathbb{B}(z,R_T(z)) \cap \mathbb{B}(z',R_T(z')) \not= \emptyset \big\} < \infty ~.
\]
Thus using the translation invariance property of the $R_T(z)$'s, it is also enough to state that the expectation
\[
\mathbb{E} \Big[ \# \big\{ z \in \mathcal{H} : \, \mathbb{B}(0,R_T(0)) \cap \mathbb{B}(z, R_T(z)) \not= \emptyset \big\} \Big]
\]
is finite. This follows from the next computation:
\begin{eqnarray*}
	\mathbb{E} \Big[ \# \big\{ z \in \mathcal{H} : \, \mathbb{B}(0,R_T(0)) \cap \mathbb{B}(z,R_T(z)) \not= \emptyset \big\} \Big] & = & \mathbb{E} \Big[  \sum_{z\in\mathcal{H}} \mathbf{1}_{R_T(0) + R_T(z) \geq \|z\|} \Big] \\
	& = & \sum_{z\in\mathcal{H}} \mathbb{P} \big( R_T(0) + R_T(z) \geq \|z\| \big) \\
	& \leq & \sum_{z\in\mathcal{H}} \mathbb{P} \big( \{ R_T(0) \geq \|z\|/2 \} \cup \{ R_T(z) \geq \|z\|/2 \} \big) \\
	& \leq & 2 \sum_{\ell = 0}^{\infty} \sum_{\|z\| = \ell} \mathbb{P} \big( R_T(0) \geq \|z\|/2 \big) \\
	& \leq & C_{d} \sum_{\ell = 0}^{\infty} \ell^{d-2} \, \mathbb{P} \big( R_T(0) \geq \ell \big) ~.
\end{eqnarray*}
This latter sum is finite thanks to Lemma \ref{lem: tail distribution}.
\end{proof}

\textbf{Conclusion.} Let us interpret the percolation model $\Sigma$ as the (undirected) graph $\mathcal{G}$ whose vertex set is given by the starting points $(z,t)$, with $z \in \mathcal{H}$ and $t \leq n_0$ and whose edge set is made of pairs $\{(z,t),(z',t')\}$ such that the corresponding balls $\mathbb{B}(z,R((z,t),T))$ and $\mathbb{B}(z',R((z',t'),T))$ overlap. Lemma \ref{lem: finite intersections} asserts that each vertex of the graph $\mathcal{G}$ almost surely has a finite degree.

Now, combining this finite degree property with the \textit{Absurd hypothesis}, we get the existence of an \textit{infinite descending chain}. Let us explain why. Recall that $T \geq n_0$ and $K_0$ are fixed parameters. The \textit{Absurd hypothesis} says that, with positive probability, for any integer $N_0$, there exists a sequence of $\kappa = \kappa(N_0) \geq 1$ particles satisfying (\ref{ChainChanges3}). With each of these sequences can be associated a cluster in the percolation model $\Sigma = \Sigma(n_0,T)$ joining the outside of $\mathbb{Z}_{N_0}$ to $\mathbb{Z}_{K_0}$ whose centers have increasing times. Roughly speaking, these sequences connect the strip $\mathbb{Z}_{K_0}$ to regions as far as desired through the percolation model $\Sigma$. Then, using the finite degree property and proceeding step by step, we can extract an infinite sequence of starting points $((z_i, t_i))_{i \geq 1}$ such that
\begin{equation}
\label{ChainChanges4}
\left\{
\begin{array}{l}
	\text{$\|z_i\| \to \infty$} \\
	\text{for any index $i$, $0 < t_{i+1} < t_i < n_0$} \\
	\text{for any index $i$, $\mathbb{B}(z_i,R((z_i,t_i),T)) \cap \mathbb{B}(z_{i+1},R((z_{i+1},t_{i+1}),T)) \not= \emptyset$} \\
	\text{$\mathbb{B}(z_1,R((z_1,t_1),T)) \cap \mathbb{Z}_{K_0} \not= \emptyset$}
\end{array}
\right. 
\end{equation}
Note that, w.r.t. (\ref{ChainChanges3}), we have reversed the indices of the time sequence $(t_i)_{i\geq 1}$. On the one hand, this sequence of starting points $((z_i, t_i))_{i \geq 1}$ is \textit{infinite} and means that $\Sigma$ percolates since it contains an unbounded cluster. On the other hand, it is also \textit{descending} since the sequence of starting times $(t_i)_{i \geq 1}$ is decreasing.

Let $\mathcal{K}_0 := \mathcal{H}_{K_0}$. The previous analysis allows us to say that, under the \textit{Absurd hypothesis}, the following holds
\begin{equation}
\label{eqn: contra bis}
\mathbb{P} \big( \mathrm{\textbf{Perco}}(n_0,\mathcal{K}_0,n_0) \big) > 0 ~,
\end{equation} 
where, for any $0 \leq t \leq T$ and any compact set $\mathcal{K} \subset \mathcal{H}$,
\[
\mathrm{\textbf{Perco}}(t,\mathcal{K},T) :=\left\lbrace\begin{array}{c}
\mbox{$\exists$ a sequence $((z_i,t_i))_{i\geq 1}$ of starting points s.t. $\|z_i\| \to \infty$,} \\
\mbox{$\mathbb{B}(z_1,R((z_1,t_1),T)) \cap \mathcal{K} \neq \emptyset$, and for any $i \geq 1$, $t_{i+1} < t_i < t$} \\
\mbox{and $\mathbb{B}(z_i,R((z_i,t_i),T)) \cap \mathbb{B}(z_{i+1},R((z_{i+1},t_{i+1}),T)) \neq \emptyset$}
\end{array}
\right\rbrace ~.
\]
In the next section, we will take advantage of the monotonicity of the time sequence $(t_i)_{i \geq 1}$ to establish that the infinite descending chain mentioned above appears instantaneously.

\subsection{Instantaneous percolation}
\label{subsec: instant perco}

For $T \geq t \geq 0$, let us set:
\begin{equation}
\label{eqn: perco def}
\Perco(t,T):= \bigcup_{\mathcal{K}} \Perco(t,\mathcal{K},T) ~,
\end{equation}
where the union above is taken over all compact subsets of $\mathcal{H}$. The event $\Perco(t,T)$ ensures the existence of an infinite descending chain made up with balls coming from the percolation model $\Sigma$, anchored at some (random) $\mathcal{K} \subset \mathcal{H}$. The event $\Perco(t,T)$  is monotone w.r.t. parameters $t$ and $T$:

\begin{lemma}
\label{lemma: perco}
Let $0 \leq t \leq t' \leq T \leq T'$. Then,
\[
\Perco(t,T) \subset \Perco(t,T') \; \mbox{ and } \; \Perco(t,T) \subset \Perco(t',T) ~.
\]
\end{lemma}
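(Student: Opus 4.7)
The plan is to prove both inclusions by taking the infinite descending chain witnessing $\Perco(t,T)$ and checking that, after the appropriate weakening of the constraints, the exact same chain (and the same anchoring compact $\mathcal{K}$) witnesses the right-hand event. The second inclusion is essentially tautological, while the first relies on a monotonicity property of the radii $R((z,t),T)$ in the parameter $T$.

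For the second inclusion $\Perco(t,T) \subset \Perco(t',T)$, fix $\omega \in \Perco(t,T)$, so there is a compact $\mathcal{K} \subset \mathcal{H}$ and a sequence of starting points $((z_i,t_i))_{i\geq 1}$ meeting the four bullet conditions in the definition of $\Perco(t,\mathcal{K},T)$. Since $t \leq t'$, the bound $t_{i+1} < t_i < t$ automatically yields $t_{i+1} < t_i < t'$. The other three bullets (the escape to infinity, the anchoring to $\mathcal{K}$, and the overlap of consecutive balls, all defined with the same radii $R((z_i,t_i),T)$) are unchanged, so $\omega \in \Perco(t',\mathcal{K},T) \subset \Perco(t',T)$.

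For the first inclusion $\Perco(t,T) \subset \Perco(t,T')$ with $T \leq T'$, the key observation is that $A^{\dagger}_T[\infty] \subset A^{\dagger}_{T'}[\infty]$ almost surely. Indeed, each finite-volume aggregate $A^{\dagger}_T[M]$ is obtained by adding particles during $[0,T]$ from sources in $\mathcal{H}_M$, so $A^{\dagger}_T[M] \subset A^{\dagger}_{T'}[M]$ for every $M$ (the extra particles during $(T,T']$ only enlarge the aggregate), and the inclusion passes to the limit $M \to \infty$. Consequently, for any starting point $(z,t)$, the exit time $\tau'(z,t,T')$ is at least $\tau'(z,t,T)$, whence the ball $\mathbb{B}(z,R((z,t),T))$ enclosing the projected trajectory up to time $\tau'(z,t,T)$ is contained in the ball $\mathbb{B}(z,R((z,t),T'))$ enclosing the (longer) trajectory up to time $\tau'(z,t,T')$. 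In particular,
\[
R((z,t),T) \leq R((z,t),T') \quad \text{a.s.}
\]

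Now take $\omega \in \Perco(t,T)$ with witness $\mathcal{K}$ and chain $((z_i,t_i))_{i\geq 1}$. Enlarging each radius from $R((z_i,t_i),T)$ to $R((z_i,t_i),T')$ only enlarges the balls, so the overlap conditions $\mathbb{B}(z_i,R((z_i,t_i),T')) \cap \mathbb{B}(z_{i+1},R((z_{i+1},t_{i+1}),T')) \neq \emptyset$ and the anchoring $\mathbb{B}(z_1,R((z_1,t_1),T')) \cap \mathcal{K} \neq \emptyset$ still hold, while the time constraint $t_{i+1} < t_i < t$ is untouched. Hence $\omega \in \Perco(t,\mathcal{K},T') \subset \Perco(t,T')$, concluding the proof. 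No real obstacle is expected here: the argument is a direct consequence of the monotonicity in $T$ of the aggregates $A^{\dagger}_T[\infty]$ together with the straightforward fact that weakening either the time ceiling or increasing the ball radii preserves the existence of the descending chain.
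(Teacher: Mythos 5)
Your proof is correct and follows essentially the same route as the paper: the second inclusion by relaxing the time ceiling on the decreasing sequence $(t_i)$, and the first by the a.s.\ monotonicity $A^{\dagger}_T[\infty] \subset A^{\dagger}_{T'}[\infty]$, which yields $R((z,t),T) \leq R((z,t),T')$ and hence preserves the overlap and anchoring conditions of the same chain.
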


\begin{proof}
Let $0 \leq t \leq T \leq T'$. Recall that the infinite aggregate $A^{\dagger}_T[\infty]$ corresponds to particles launched from the whole set of sources $\mathcal{H}$ and during the time interval $[0,T]$. Hence, $A^{\dagger}_T[\infty]$ and $A^{\dagger}_{T'}[\infty]$ can be naturally coupled so that a.s. $A^{\dagger}_T[\infty] \subset A^{\dagger}_{T'}[\infty]$. This leads to $R((z_i,t_i),T) \leq R((z_i,t_i),T')$ whatever the starting point $(z_i,t_i)$ with $t_i \leq t$. So, $\Perco(t,\mathcal{K},T)$ is included in $\Perco(t,\mathcal{K},T')$, for any compact set $\mathcal{K}$, and the same holds for $\Perco(t,T)$ and $\Perco(t,T')$.

The second inclusion is easy to prove. Indeed, replacing $t$ with $t' \geq t$ amounts to relax the upper bound on the decreasing sequence $(t_i)_{i\geq 1}$ appearing in the event $\Perco(t,\mathcal{K},T)$. So $\Perco(t,\mathcal{K},T)$ is included in $\Perco(t',\mathcal{K},T)$, for any $\mathcal{K}$, and the same holds for $\Perco(t,T)$ and $\Perco(t',T)$.
\end{proof}

From now on, we fix $T := n_0+1$. (We pick this value of $T$ to ensure that $T>n_0$. This condition will be crucial for our arguments detailed below.) Let us introduce the critical percolation time as
\[
t_c = t_c(T) := \inf \big\{ 0 \leq t \leq T: \mathbb{P} \big( \Perco(t, T) \big) > 0 \big\} \in [0, T] ~.
\]
Combining to the monotone property given by Lemma \ref{lemma: perco}, we can then write:
\begin{equation}
\label{eqn: tc}
\left\{
\begin{array}{ll}
	\mathbb{P}\left(\Perco(t, T)\right) = 0$ for any $0 \leq t < t_c, \\
	\mathbb{P}\left(\Perco(t, T)\right) > 0$ for any $t_c < t \leq T.
\end{array}
\right. 
\end{equation}
Both statements of (\ref{eqn: tc}) become meaningful provided the critical percolation time $t_c$ is non-trivial, i.e. different from $0$ and $T$. This is where the \textit{Absurd hypothesis} steps in since (\ref{eqn: contra bis}) and Lemma \ref{lemma: perco} imply that
\[
\mathbb{P} \big( \Perco(n_0,T) \big) \geq \mathbb{P} \big( \Perco(n_0,n_0) \big) \geq \mathbb{P} \big( \Perco(n_0,\mathcal{K}_0,n_0) \big) > 0
\]
that is to say
\begin{equation}
\label{t_c bound}
t_c \leq n_0 < T ~.
\end{equation}
Actually, the condition $t_c < T$--ensured by the \textit{Absurd hypothesis}--leads to a phenomenon of \textit{instantaneous percolation} for the percolation model $\Sigma$ that we describe below.

Let us first assume that the critical percolation time $t_c$ is positive. The case $t_c = 0$ is similar and will be treated after. Hence for any $\varepsilon > 0$ small enough (i.e. such that $t_c - \varepsilon \geq 0$ and $t_c + \varepsilon \leq T$), we have $\mathbb{P}(\Perco(t_c + \varepsilon,T)) > 0$ while $\mathbb{P}(\Perco(t_c - \varepsilon,T)) = 0$ by (\ref{eqn: tc}), meaning that
\[
\mathbb{P} \big( \Perco(t_c + \varepsilon, T) \! \setminus \! \Perco(t_c - \varepsilon, T) \big) > 0 ~.
\]
Let us analyze what happens on the event $\Perco(t_c + \varepsilon, T)\!\setminus\!\Perco(t_c - \varepsilon, T)$. First of all, the event $\Perco(t_c + \varepsilon, T)$ asserts the existence of an infinite descending chain in $\Sigma$ associated to a sequence of starting points $((z_i,t_i))_{i \geq 1}$ with, for any $i \geq 1$, $t_{i+1} < t_i < t_c+\varepsilon$. Besides, the event $\Perco(t_c - \varepsilon, T)^c$ forces all the $t_i$'s to be larger than $t_c - \varepsilon$. Otherwise there would exist some index $i_0$ such that $t_{i_0} < t_c - \varepsilon$. In that case, one would have an infinite descending chain associated to the sequence of starting points  $((z_i,t_i))_{i \geq i_0}$ anchored at $\mathbb{B}(z_{i_0},R((z_{i_0},t_{i_0}),T))$ and such that for any $i$, $t_{i+1} < t_i \leq t_{i_0} < t_c - \varepsilon$, meaning that $\Perco(t_c - \varepsilon, T)$ actually occurs. In conclusion, the sequence of starting points $((z_i,t_i))_{i \geq 1}$ satisfies $t_c-\varepsilon < t_{i+1} < t_i < t_c+\varepsilon$ for any index $i$. This means that this infinite descending chain appears entirely during the time interval $[t_c-\varepsilon,t_c+\varepsilon]$, with length $2 \varepsilon$, for $\varepsilon > 0$ arbitrarily small. This is why we talk about instantaneous percolation. Let us emphasize that the previous argument works because the radii $R((z_{i},t_{i}),T)$'s remain the same in both events $\Perco(t_c - \varepsilon,T)$ and $\Perco(t_c+ \varepsilon, T)$ since they have the same second parameter $T$.

In the particular case $t_c = 0$, we know that there is no percolation at the critical time $t_c$ since no particles have been launched yet. So we apply the previous analysis with $t_c + \varepsilon = \varepsilon$ and $t_c - \varepsilon$ replaced with $0$. As before, an infinite descending chain appears entirely during the time interval $[0,\varepsilon]$, for any small $\varepsilon > 0$.

In order to simplify the argumentation and lighten notation, let us reduce the first case $t_c > 0$ to the second one $t_c = 0$. Indeed, because the radii $R((z_{i},t_{i}),T)$'s remain unchanged in the events $\Perco(\cdot,T)$, the time interval during which the infinite descending chain entirely occurs matters in distribution only through its length (the \textsc{ppp} $\mathcal{N}_z$'s have stationary increments). Henceforth,
\[
\Big( \forall \varepsilon > 0 , \; \mathbb{P} \big( \Perco(t_c + \varepsilon, T) \! \setminus \! \Perco(t_c - \varepsilon, T) \big) > 0 \Big) \; \Longrightarrow \Big( \forall \varepsilon > 0 , \; \mathbb{P} \big( \Perco(\varepsilon, T) \big) > 0 \Big) ~.
\]
\medskip

\textbf{Conclusion.} We have taken advantage of the monotonicity in time of the sequence of starting points $((z_i,t_i))_{i \geq 1}$ to state that the corresponding infinite descending chain in the percolation model $\Sigma$ appears instantaneously. Precisely, we have proven under the \textit{Absurd hypothesis} that
\begin{equation}
\label{Perco-eps}
\forall \varepsilon > 0 , \; \mathbb{P} \big( \Perco(\varepsilon, T) \big) > 0 ~.
\end{equation}
We can now forget the time dimension of our model: (\ref{Perco-eps}) allows us to exhibit a \textit{supercritical} discrete Boolean model, denoted by $\hat{\Sigma}_{\varepsilon}$, whose intensity tends to $0$ as $\varepsilon \to 0$.

Let us set, for any source $z \in \mathcal{H}$,
\[
Y_z := \mathbf{1}_{\# \mathcal{N}_z([0,\varepsilon]) > 0} ~.
\]
So $Y_z = 1$ means that at least one particle has been emitted during the time interval $[0,\varepsilon]$. The $Y_z$'s are Bernoulli random variables with common parameter
\begin{equation}
\label{eqn: p epsilon value}
p_{\varepsilon} := \mathbb{P}\left( Y_z = 1 \right) = 1 - e^{-\varepsilon}
\end{equation}
which tends to $0$ as $\varepsilon \to 0$. By hypothesis on the \textsc{ppp} $\mathcal{N}_z$'s, the random variables $Y_z$, $z \in \mathcal{H}$, are also independent from each other. Let us denote by
\[
\chi_{\varepsilon} := \big\{ z \in \mathcal{H} : \, Y_z = 1 \big\}
\]
the random set of emitting sources during the time interval $[0,\varepsilon]$. The set $\chi_{\varepsilon}$ can be interpreted as a discrete \textsc{ppp} on $\mathcal{H}$ with intensity $p_\varepsilon$: it will play the role of the center set for the Boolean model $\hat{\Sigma}_{\varepsilon}$. In all that follows, let us keep in mind that when $\varepsilon \to 0$ there are very few centers, and so very few balls, in the Boolean model $\hat{\Sigma}_{\varepsilon}$.

Thus, for $z \in \mathcal{H}$, let us define in the same spirit of \eqref{eqn: R_T def} the radius $R_T(z ; \varepsilon)$ as
\[
R_T(z ; \varepsilon) := \max_{t \in \mathcal{N}_z([0,\varepsilon])} R((z,t), T) ~.
\]
The only difference between radii $R_T(z;\varepsilon)$ and $R_T(z)$ defined in \eqref{eqn: R_T def} is that $R_T(z;\varepsilon)$  involves particles launched during $[0,\varepsilon]$ while $R_T(z)$ involves particles of $[0,T]$, so that $R_T(z;\varepsilon) \leq R_T(z)$ (and then $R_T(z;\varepsilon)$ also satisfies the finite moment property of Lemma \ref{lem: tail distribution}). They both refer to radii $R(\cdot,T)$ defined from the aggregate $A^{\dagger}_{T}[\infty]$ (with $T = n_0+1$).

We are now ready to define the (discrete) Boolean model $\hat{\Sigma}_{\varepsilon}$ by
\[
\hat{\Sigma}_{\varepsilon} := \bigcup_{z \in \chi_{\varepsilon}} \mathbb{B} \big( z,R_T(z;\varepsilon) \big) ~.
\]
Statement (\ref{Perco-eps}) says that, for any $\varepsilon > 0$ and with positive probability, there exists a sequence of starting points $((z_i,t_i))_{i\geq 1}$ such that $\|z_i\| \to \infty$ and, for any $i \geq 1$, $t_{i+1} < t_i < \varepsilon$ and the balls $\mathbb{B}(z_i,R((z_i,t_i),T))$ and $\mathbb{B}(z_{i+1},R((z_{i+1},t_{i+1}),T))$ overlap. So, the $z_i$'s all belong to $\chi_\varepsilon$ and the larger balls $\mathbb{B}(z_i,R_T(z_i;\varepsilon))$ and $\mathbb{B}(z_{i+1},R_T(z_{i+1};\varepsilon))$ overlap too. In other words, the Boolean model $\hat{\Sigma}_{\varepsilon}$ contains an unbounded cluster.

Finally, the argumentation of the whole Section \ref{sec: chains to perco} provides the following result:

\begin{proposition}
\label{prop:ConcSection3}
Under the \textit{Absurd hypothesis} (\ref{Absurd-Hypo}), the following holds:
\begin{equation}
	\label{Perco-Sigma-Hat}
	\forall \varepsilon > 0 , \; \mathbb{P} \Big( \text{$\hat{\Sigma}_{\varepsilon}$ percolates} \Big) > 0 ~.
\end{equation}
\end{proposition}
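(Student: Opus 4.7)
The plan is to assemble the three ingredients already prepared in this section into a single chain of implications. First, I would start from the Absurd hypothesis, which supplies with positive probability a sequence $(N_k)$ along which the forests $\fdag{n_0}[N_k]$ and $\fdag{n_0}[N_0]$ disagree inside $\mathbb{Z}_{K_0}$. Each such disagreement originates in a chain of changes, i.e.\ a finite sequence of starting points $((z_i,t_i))$ with increasing times, overlapping balls, and $z_1$ arbitrarily far from the origin, satisfying the conditions (3.4). After inflating the radii $R_N(z_i,t_i)$ up to the translation-invariant radii $R((z_i,t_i),T)$ associated with $A^{\dagger}_T[\infty]$ (with $T=n_0+1$), the chain still satisfies the analogous overlapping conditions (3.5), so the percolation model $\Sigma$ contains arbitrarily long clusters connecting $\mathbb{Z}_{K_0}$ to the complement of $\mathbb{Z}_{N_0}$.

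Second, I would invoke the finite-degree property (Lemma~\ref{lem: finite intersections}) together with a König-type extraction argument: since each ball of $\Sigma$ overlaps only finitely many others almost surely, one can recursively extract from the nested family of finite chains an infinite descending chain $((z_i,t_i))_{i\geq 1}$ with $\|z_i\|\to\infty$, $t_{i+1}<t_i<n_0$, consecutive balls overlapping, and the first ball meeting $\mathcal{K}_0=\mathcal{H}_{K_0}$. This yields $\mathbb{P}(\Perco(n_0,\mathcal{K}_0,n_0))>0$, hence by Lemma~\ref{lemma: perco} also $\mathbb{P}(\Perco(n_0,T))>0$. Consequently the critical time $t_c=\inf\{t\leq T:\mathbb{P}(\Perco(t,T))>0\}$ satisfies $t_c\leq n_0<T$.

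Third, and this is the delicate step, I would exploit the instantaneous percolation phenomenon to compress the descending chain into an arbitrarily short interval. The key observation is that the radii $R((z_i,t_i),T)$ depend on $T$ but not on the emission times $t_i$, so on the event $\Perco(t_c+\varepsilon,T)\setminus\Perco(t_c-\varepsilon,T)$ every $t_i$ must lie in $[t_c-\varepsilon,t_c+\varepsilon]$ — otherwise truncation of the chain at any index $i_0$ with $t_{i_0}<t_c-\varepsilon$ would realize $\Perco(t_c-\varepsilon,T)$. By stationarity of the increments of the \textsc{ppp} $\mathcal{N}_z$ (and since the aggregate $A^{\dagger}_T[\infty]$ depends only on the length of the emission window in distribution), one may translate this time window back to $[0,2\varepsilon]$ and conclude $\mathbb{P}(\Perco(\varepsilon,T))>0$ for every $\varepsilon>0$, after relabelling. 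The case $t_c=0$ is the same argument with $t_c-\varepsilon$ replaced by $0$.

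Finally, since on $\Perco(\varepsilon,T)$ each $z_i$ is a source that has emitted at least one particle during $[0,\varepsilon]$, all the $z_i$ belong to $\chi_\varepsilon$, and the balls $\mathbb{B}(z_i,R((z_i,t_i),T))$ are contained in $\mathbb{B}(z_i,R_T(z_i;\varepsilon))$; the consecutive overlaps are therefore inherited by the enlarged balls, producing an infinite cluster in $\hat{\Sigma}_\varepsilon$. The main obstacle I anticipate is the clean justification of the time-translation step: one must verify that the distribution of the overlapping structure in $\Sigma$, viewed only through the radii $R((\cdot,\cdot),T)$, is invariant under shifting all emission times by a constant, which is where the stationarity of both the Poisson clocks and the random-walk inputs is used.
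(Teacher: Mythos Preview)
Your proposal is correct and follows essentially the same approach as the paper: the argument is precisely the concatenation of Sections~3.1--3.3, passing from chains of changes to the model $\Sigma$, extracting an infinite descending chain via the finite-degree property (Lemma~\ref{lem: finite intersections}), applying the instantaneous-percolation argument around $t_c$, and finally enlarging to the balls of $\hat{\Sigma}_\varepsilon$. You rightly flag the time-translation step as the delicate point; the paper handles it just as informally, invoking only the stationary increments of the $\mathcal{N}_z$'s together with the fact that the radii $R((\cdot,\cdot),T)$ are defined via the fixed aggregate $A^{\dagger}_T[\infty]$.
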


%%%%%%%%%%%%%%%%%%%%%%%%%%%%%%%%
%%                   	      %%
%%    MULTISCALE ARGUMENT  	  %%
%%                   	  	  %%
%%%%%%%%%%%%%%%%%%%%%%%%%%%%%%%%

\section{A multiscale argument}
\label{sec: JB}

In this section, we study the $(d-1)$-dimensional, discrete Boolean model
\[
\hat{\Sigma}_{\varepsilon} = \bigcup_{z \in \chi_{\varepsilon}} \mathbb{B} \big( z , R_T(z;\varepsilon) \big) ~,
\]
defined in the previous section, and whose intensity $p_\varepsilon$ tends to $0$ as $\varepsilon \to 0$. We adapt the strategy of \cite{gouere2009subcritical} to our context to state :

\begin{proposition}
\label{prop:NoPerco}
For any $\varepsilon > 0$ small enough, with probability 1, the Boolean model $\hat{\Sigma}_{\varepsilon}$ does not percolate.
\end{proposition}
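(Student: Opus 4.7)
The plan is to adapt the multiscale renormalisation scheme of Gouéré~\cite{gouere2009subcritical}, which proves subcriticality of a Boolean model via an iterated squaring inequality for crossing probabilities at geometric scales. I would introduce a sequence of scales $L_k := L_0\, 2^k$ in the hyperplane $\mathcal{H}$ and, for each $k$, let $F_k$ be the event that $\hat{\Sigma}_{\varepsilon}$ contains a connected component crossing the annulus $\{z \in \mathcal{H} : L_k \leq \|z\| \leq 3 L_k\}$. Percolation of $\hat{\Sigma}_{\varepsilon}$ would force $\limsup_k \mathbb{P}(F_k) > 0$ (by translation invariance of the model), so it is enough to show that $\mathbb{P}(F_k) \to 0$ as soon as $\varepsilon$ is small enough.

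The core step is a recursive inequality of the shape
\[
\mathbb{P}(F_{k+1}) \leq C\, \mathbb{P}(F_k)^2 + \eta_k ,
\]
with $C = C(d)$ and $(\eta_k)$ summable. Any crossing at scale $L_{k+1}$ must either use a ball whose radius exceeds $L_k$ -- an event controlled by a union bound over the $O(L_{k+1}^{d-1})$ sources in the annulus combined with the polynomial tail estimate $\mathbb{P}(R_T \geq L_k) \leq C L_k^{-L}$ of Lemma~\ref{lem: tail distribution}, where $L$ is picked large enough to absorb any polynomial prefactor -- or it decomposes into two disjoint sub-crossings at scale $L_k$ in two well-separated sub-annuli, which should yield the $\mathbb{P}(F_k)^2$ term via a near-independence bound.

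The main obstacle, absent from the classical Poisson Boolean setting of~\cite{gouere2009subcritical}, is that the radii $R_T(z;\varepsilon)$ are not independent across sources: each one is measurable with respect to the full infinite aggregate $A^{\dagger}_T[\infty]$, which a priori depends on particles emitted from arbitrarily far away. To decorrelate the two sub-crossings, I would invoke the aggregate stabilisation Theorem~\ref{thm: stab agg dagg}: with probability at least $1 - C L_k^{-L}$ one has $A^{\dagger}_T[\infty] \cap \mathbb{Z}_{L_k} = A^{\dagger}_T[2 L_k] \cap \mathbb{Z}_{L_k}$. On this good event, the radii $R_T(z;\varepsilon)$ for $z$ in two sufficiently separated sub-boxes become measurable with respect to disjoint families of \textsc{ppp}s $\mathcal{N}_{z'}$ and independent random walks $S_{z',j}$, hence genuinely independent; the cost of the complementary bad event enters $\eta_k$ and remains summable thanks to the polynomial decay of Theorem~\ref{thm: stab agg dagg}. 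This is precisely why the paper had to sharpen the stabilisation result from the weaker version of~\cite{chenavier2024idla} to the linear-scale Theorem~\ref{thm: stab agg dagg}.

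To launch the recursion one needs a base scale at which $\mathbb{P}(F_{k_0}) \leq (2C)^{-1}$. Since $F_{k_0}$ requires at least one ball of $\hat{\Sigma}_{\varepsilon}$ to reach the annulus of scale $L_{k_0}$, a first-moment bound yields $\mathbb{P}(F_{k_0}) \leq C\, p_\varepsilon\, L_{k_0}^{d-1}\, (1 + \mathbb{E}[R_T^{d-1}])$, which goes to $0$ as $\varepsilon \to 0$ thanks to $p_\varepsilon = 1 - e^{-\varepsilon} \to 0$ and the finite moments granted by Lemma~\ref{lem: tail distribution}. Fixing $L_{k_0} = L_0$ and taking $\varepsilon$ small enough, the recursion iterates to force $\mathbb{P}(F_k) \to 0$. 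I expect the delicate step -- the one where the proof really departs from~\cite{gouere2009subcritical} -- to be the rigorous decorrelation argument, which must combine the two-arm geometric decomposition with the polynomial stabilisation of Theorem~\ref{thm: stab agg dagg} while carefully tracking the error at every scale so that $\sum_k \eta_k$ indeed stays finite.
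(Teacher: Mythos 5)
Your plan is essentially the paper's proof: the paper also runs a Gou\'er\'e-style multiscale recursion $\pi_\varepsilon(10M) \le c\,\pi_\varepsilon(M)^2 + C/M^L$, uses the aggregate stabilisation Theorem~\ref{thm: stab agg dagg} to decorrelate well-separated sub-boxes, Lemma~\ref{lem: tail distribution} to control large radii, and the base-case estimate $\pi_\varepsilon(M) \le C'\varepsilon M^{d-1}$ driven by $p_\varepsilon \to 0$. One technical refinement the paper makes, which you gloss over in the phrase ``on this good event the radii become measurable with respect to disjoint families of PPPs'': conditioning on a stabilisation event does not by itself produce independence, so the paper instead introduces an auxiliary, genuinely localized Boolean model $\hat{\Sigma}_\varepsilon^{\text{\tiny loc}}(x,M)$ whose radii are defined from $A^{\dagger}_T[\mathbb{B}(x,20M)]$ and are thus \emph{a priori} measurable w.r.t.\ sources in $\mathbb{B}(x,20M)$ only, and then invokes Theorem~\ref{thm: stab agg dagg} only to bound the probabilistic cost of replacing $\hat{\Sigma}_\varepsilon$ by its localized surrogate at each scale.
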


To implement the strategy of \cite{gouere2009subcritical}, we need to make our model more local. This is the role of $\hat{\Sigma}_{\varepsilon}^\text{\tiny{loc}}$ defined below. For that purpose, the stabilization result for the infinite aggregate $A^{\dagger}_n[\infty]$ (Theorem \ref{thm: stab agg dagg}) is an important ingredient to ensure that the localized model $\hat{\Sigma}_{\varepsilon}^\text{\tiny{loc}}$ is a good approximation of $\hat{\Sigma}_{\varepsilon}$.

\begin{proof}[Proof of Theorem~\ref{thm: forest stabilization}]
Propositions \ref{prop:ConcSection3} and \ref{prop:NoPerco} together say that the \textit{Absurd hypothesis} (\ref{Absurd-Hypo}) leads to a contradiction. We then conclude that, with probability $1$, there exists $N_0$ such that, for any $N \geq N_0$, the forests $\fdag{n_0}[N]$ and $\fdag{n_0}[N_0]$ coincide on the strip $\mathbb{Z}_{K_0}$. This concludes the proof of Theorem~\ref{thm: forest stabilization}.
\end{proof}

\subsection{The localized Boolean models $\hat{\Sigma}_{\varepsilon}^\text{\tiny{loc}}$}

Recall that $T = n_0+1$ and $\varepsilon > 0$ is thought to be small. Given a source $x \in \mathcal{H}$, we denote by $A^{\dagger}_{T}[\mathbb{B}(x,20M)]$ the aggregate $A^{\dagger}_T[\cdot]$ using only sources of $\mathbb{B}(x,20M)$. Associated to the starting point $(z,t)$, with $z \in \mathcal{H}$ and $t \in [0,\varepsilon]$, let us introduce the local radius $R^\text{\tiny{loc}}_{x,M}((z,t),T)$ defined by
\begin{equation}
\label{defi-Rloc}
R^\text{\tiny{loc}}_{x,M}((z,t),T) := \min \big\{ r \in \mathbb{N} : \text{$\mathbb{B}(z,r)$ contains $p_{\mathcal{H}}(S_{z,t}(0)),p_{\mathcal{H}}(S_{z,t}(1)), \ldots ,p_{\mathcal{H}}(S_{z,t}(\tau''))$} \big\}
\end{equation}
where
\[
\tau'' := \min\{ k : S_{z,t}(k) \notin A^{\dagger}_{T}[\mathbb{B}(x,20M)]\}
\]
denotes the time at which the particle moving according to $S_{z,t}$ exits $A^{\dagger}_{T}[\mathbb{B}(x,20M)]$. Taking the maximum over starting points $(z,t)$ with $t \in  \mathcal{N}_z([0,\varepsilon])$, we get
\[
R^\text{\tiny{loc}}_{T}(z;\varepsilon) = R^\text{\tiny{loc}}_{T,x,M}(z;\varepsilon) := \max_{t \in \mathcal{N}_z([0,\varepsilon])} R^\text{\tiny{loc}}_{x,M}((z,t),T) ~.
\]
Let us now define the \textit{localized Boolean model} $\hat{\Sigma}_{\varepsilon}^\text{\tiny{loc}}(x,M)$ which is a version of $\hat{\Sigma}_{\varepsilon}$ localized to the neighborhood of $x$:
\[
\hat{\Sigma}_{\varepsilon}^\text{\tiny{loc}} = \hat{\Sigma}_{\varepsilon}^\text{\tiny{loc}}(x,M) := \bigcup_{z \in \chi_{\varepsilon} \cap \mathbb{B}(x,10M)} \mathbb{B} \Big( z, R^\text{\tiny{loc}}_{T,x,M}(z;\varepsilon) \Big) ~.
\]
Unlike $\hat{\Sigma}_{\varepsilon}$, the localized Boolean model $\hat{\Sigma}_{\varepsilon}^\text{\tiny{loc}}$ is local in the following sense. It only uses sources of $\chi_{\varepsilon}$ contained inside $\mathbb{B}(x,10M)$. The radii that it considers depend only on $A^{\dagger}_{T}[\mathbb{B}(x,20M)]$.

As in \cite{gouere2009subcritical}, we consider the event
\[
G_{\varepsilon}(x,M) := \left\{
\begin{array}{c}
\text{The connected component of $x$ in $\hat{\Sigma}_{\varepsilon}^\text{\tiny{loc}}(x,M) \cup \mathbb{B}(x,M)$} \\
\text{is not included in $\mathbb{B}(x,8M)$} \\
\end{array}
\right\} ~. 
\]
Since our model is translation invariant, we have for any $x$:
\[
\pi_{\varepsilon}(M) := \mathbb{P} \big( G_{\varepsilon}(0,M) \big) = \mathbb{P} \big( G_{\varepsilon}(x,M) \big) ~.
\]

Let us denote by $\hat{C}_{\varepsilon}(0)$ the cluster of the source $0$ in the discrete Boolean model $\hat{\Sigma}_{\varepsilon}$ whose diameter $\text{diam}\,\hat{C}_{\varepsilon}(0)$ is defined as the minimal integer $r$ such that $\hat{C}_{\varepsilon}(0) \subset \mathbb{B}(0,r)$. The next two results allow us to conclude. Notably, Proposition \ref{prop: model link}, allows us to connect the Boolean model $\hat{\Sigma}_{\varepsilon}$ with its localized counterpart $\hat{\Sigma}_{\varepsilon}^\text{\tiny{loc}}$.

\begin{proposition}
\label{prop: model link}
For all $L \geq 1$, there exists a constant $C = C_{T,d,L} > 0$ such that for all $M \geq 1$,
\[
\mathbb{P} \big( \mathrm{diam}\,\hat{C}_{\varepsilon}(0) \geq 8M \big) \leq \pi_{\varepsilon}(M) + \frac{C}{M^L} ~.
\]
\end{proposition}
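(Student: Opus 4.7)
The plan is to follow Gouéré's coupling strategy from \cite{gouere2009subcritical}: first introduce a high-probability event $\mathcal A_M$ on which the full model $\hat\Sigma_\varepsilon$ restricted to $\mathbb B(0,8M)$ agrees with the localized model $\hat\Sigma_\varepsilon^{\mathrm{loc}}(0,M)$; then prove the inclusion $\{\mathrm{diam}\,\hat C_\varepsilon(0)\geq 8M\}\cap\mathcal A_M\subset G_\varepsilon(0,M)$; and finally show $\mathbb P(\mathcal A_M^c)\leq C/M^L$. The inequality of the proposition will then follow at once from
\[
\mathbb P(\mathrm{diam}\,\hat C_\varepsilon(0)\geq 8M)\;\leq\;\mathbb P(G_\varepsilon(0,M))+\mathbb P(\mathcal A_M^c)\;=\;\pi_\varepsilon(M)+\mathbb P(\mathcal A_M^c).
\]

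I would define $\mathcal A_M:=\mathcal A_M^{\text{ext}}\cap\mathcal A_M^{\text{stab}}\cap\mathcal A_M^{\text{int}}$, with $\mathcal A_M^{\text{ext}}$ the event that no source $z\in\chi_\varepsilon$ with $\|z\|>10M$ satisfies $R_T(z;\varepsilon)\geq\|z\|-8M$ (i.e.\ no exterior ball reaches $\mathbb B(0,8M)$); $\mathcal A_M^{\text{stab}}:=\{A^\dagger_T[\infty]\cap\mathbb Z_{10M}=A^\dagger_T[\mathcal H_{20M}]\cap\mathbb Z_{10M}\}$, directly furnished by Theorem~\ref{thm: stab agg dagg} applied with parameter $10M$; and $\mathcal A_M^{\text{int}}$ the event that every walk $S_{z,t}$ emitted from a source $z\in\chi_\varepsilon\cap\mathbb B(0,10M)$ at a top $t\in\mathcal N_z([0,\varepsilon])$ exits the local aggregate $A^\dagger_T[\mathcal H_{20M}]$ at a point of $\mathbb Z_{10M}$. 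Under $\mathcal A_M^{\text{stab}}$, this latter point is automatically outside $A^\dagger_T[\infty]$ as well, which forces $R_T(z;\varepsilon)=R^{\mathrm{loc}}_{T,0,M}(z;\varepsilon)$. Any overlapping chain in $\hat\Sigma_\varepsilon$ joining $0$ to the exterior of $\mathbb B(0,8M)$, truncated at the first ball crossing $\partial\mathbb B(0,8M)$, then has every ball meeting $\mathbb B(0,8M)$; on $\mathcal A_M^{\text{ext}}$ all its centers lie in $\mathbb B(0,10M)$, and on $\mathcal A_M^{\text{stab}}\cap\mathcal A_M^{\text{int}}$ these balls coincide with their local counterparts. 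The same chain therefore lies in $\hat\Sigma_\varepsilon^{\mathrm{loc}}(0,M)\cup\mathbb B(0,M)$ (the added ball can only enlarge the component of $0$) and witnesses $G_\varepsilon(0,M)$.

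The three components of $\mathcal A_M^c$ will then be controlled separately. The bound on $\mathcal A_M^{\text{stab}}$ is Theorem~\ref{thm: stab agg dagg} itself. For $\mathcal A_M^{\text{ext}}$, translation invariance of $R_T$ (Proposition~\ref{prop: invariance}) together with the polynomial tail of Lemma~\ref{lem: tail distribution} gives, by a union bound,
\[
\mathbb P((\mathcal A_M^{\text{ext}})^c)\;\leq\;\sum_{k>10M}C_d\,k^{d-2}\,\mathbb P(R_T\geq k/5)\;\leq\;C'\,M^{-L},
\]
provided the tail exponent $L'$ in Lemma~\ref{lem: tail distribution} is chosen $\geq L+d-1$. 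The hard part will be $(\mathcal A_M^{\text{int}})^c$: a walk from $z\in\chi_\varepsilon\cap\mathbb B(0,10M)$ can exit its local aggregate outside $\mathbb Z_{10M}$ only if its projection travels at least $D_z:=\mathrm{dist}(z,\partial\mathcal H_{10M})$, and for sources close to $\partial\mathcal H_{10M}$ this distance can be arbitrarily small, so the naive bound $\mathbb P(R_T\geq M)$ does not suffice. I would stratify the sources by the value of $D_z$ and, on each layer, use $\mathbb P(Y_z=1)=p_\varepsilon$ together with the conditional tail $\mathbb P(R_T(z;\varepsilon)\geq D_z\mid Y_z=1)\leq C/D_z^{L'}$ coming from Lemma~\ref{lem: tail distribution}, so that the total contribution is at most $p_\varepsilon\sum_{r\geq 1}\mathrm{card}\{z\in\mathcal H_{10M}:D_z=r\}\,r^{-L'}$. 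Choosing $L'$ large enough produces the desired $C/M^L$ estimate, the constant $C$ being allowed to depend on $T$, $d$, $L$ and $\varepsilon$ (which is harmless, since Proposition~\ref{prop:NoPerco} applies Proposition~\ref{prop: model link} at a fixed small $\varepsilon$).
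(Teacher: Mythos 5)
Your overall architecture (good event $\mathcal{A}_M$, inclusion into $G_\varepsilon(0,M)$, polynomial bound on $\mathbb{P}(\mathcal{A}_M^c)$) is the right one, and your treatment of $\mathcal{A}_M^{\text{ext}}$ and $\mathcal{A}_M^{\text{stab}}$ is sound. The gap is in $\mathcal{A}_M^{\text{int}}$: the event that \emph{every} walk emitted from $\chi_\varepsilon\cap\mathbb{B}(0,10M)$ exits the local aggregate at a point of $\mathbb{Z}_{10M}$ is simply not a high-probability event. A source $z$ with $\|z\|$ equal to (or within $O(1)$ of) $10M$ that belongs to $\chi_\varepsilon$ emits a walk whose exit site typically lies within a few steps of $z$, and with probability bounded below by a constant its projection lands just outside $\mathcal{H}_{10M}$; since there are order $M^{d-2}$ such boundary sources, $\mathbb{P}\big((\mathcal{A}_M^{\text{int}})^c\big)$ does not decay in $M$ at all (for fixed $\varepsilon$ it even tends to $1$ when $d\geq 3$). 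Your own stratified estimate shows the problem: the layer $D_z=1$ alone contributes about $p_\varepsilon\,C\,M^{d-2}$ to $p_\varepsilon\sum_{r\geq 1}\#\{z:D_z=r\}\,r^{-L'}$, and no choice of $L'$ removes it, so the claimed bound $C/M^L$ cannot be reached by this route. The decomposition therefore fails, not merely the estimate.

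The fix is to demand radius localization only for the balls that can actually appear in a crossing chain, and this is what the paper does. Its good event contains, besides the stabilization $\{A^{\dagger}_T[\infty]\cap\mathbb{Z}_{10M}=A^{\dagger}_T[\mathbb{B}(0,20M)]\cap\mathbb{Z}_{10M}\}$ from Theorem~\ref{thm: stab agg dagg}, the event $H(M)$ that every ball of $\hat{\Sigma}_\varepsilon$ meeting $\mathbb{B}(0,10M)$ has radius smaller than $M$ (Lemma~\ref{lemma: radius control}: near centers are handled by a union bound over $O(M^{d-1})$ sources with the tail of Lemma~\ref{lem: tail distribution}, far centers via the cone event of Proposition~\ref{thm: global upper bound} and Lemma~\ref{prop: particle stab}). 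On this event, every ball of the chain meets $\mathbb{B}(0,8M)$ and has radius $<M$, hence is entirely contained in $\mathbb{B}(0,10M)$; consequently the \emph{whole} projected trajectory up to the exit of $A^{\dagger}_T[\infty]$ stays in $\mathbb{Z}_{10M}$, where the two aggregates coincide, which forces the local and global exit times, and thus $R_T(z;\varepsilon)$ and $R^{\mathrm{loc}}_{T,0,M}(z;\varepsilon)$, to be equal for exactly those balls. No statement about walks emitted near the boundary of $\mathbb{B}(0,10M)$ is ever needed. If you replace your $\mathcal{A}_M^{\text{int}}$ (and strengthen $\mathcal{A}_M^{\text{ext}}$) by this radius-bound event $H(M)$, the rest of your argument goes through essentially verbatim.
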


\begin{proposition}
\label{prop: pi inequality}
For any $\varepsilon > 0$ small enough we have
\[
\liminf_{M \to \infty} \pi_{\varepsilon}(M) = 0 ~.
\]
\end{proposition}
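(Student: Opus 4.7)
The plan is to adapt the multiscale renormalization of \cite{gouere2009subcritical} to the discrete, long-range correlated model $\hat{\Sigma}_{\varepsilon}^{\text{\tiny{loc}}}$. Everything hinges on a self-improving recursive inequality of the form
\[
\pi_{\varepsilon}(KM) \,\leq\, C_0\, M^{2(d-1)} \pi_{\varepsilon}(M)^2 + \delta(M),
\]
where $K, C_0$ depend only on $d$ and $T$, and $\delta(M)$ decays faster than any polynomial in $M$. Setting $a_k := M_k^{2(d-1)} \pi_{\varepsilon}(M_k)$ along $M_{k+1} := K M_k$, the inequality rewrites as $a_{k+1} \leq C_0 K^{2(d-1)} a_k^2 + M_{k+1}^{2(d-1)} \delta(M_k)$; once $a_0$ lies below a threshold of order $1/(C_0 K^{2(d-1)})$, the recursion forces $a_k \to 0$, and hence $\pi_{\varepsilon}(M_k) \to 0$, which yields the claimed $\liminf$.

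To derive the recursive inequality, I would argue as follows. Suppose $G_{\varepsilon}(0, KM)$ holds and intersect with the event $\mathcal{B}_M$ that every ball of $\hat{\Sigma}_{\varepsilon}^{\text{\tiny{loc}}}(0, KM)$ has radius at most $M$. On $\mathcal{B}_M$, no single ball can bridge a distance larger than $2M$, so the percolating chain from $\mathbb{B}(0, KM)$ out of $\mathbb{B}(0, 8KM)$ must consist of at least $7K/2$ consecutive links; a geometric pigeonhole (valid for $K$ chosen large enough) then extracts two disjoint sub-pieces of diameter at least $M$ centered at points $x_1, x_2 \in \mathbb{B}(0, 10KM) \cap \mathcal{H}$ with $\|x_1 - x_2\| \geq 40 M$. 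Each such $x_i$ witnesses $G_{\varepsilon}(x_i, M)$. By construction, $\hat{\Sigma}_{\varepsilon}^{\text{\tiny{loc}}}(x_i, M)$ is measurable with respect to the Poisson clocks, random walks, and aggregate sources sitting inside $\mathbb{B}(x_i, 20 M)$, so the events $G_{\varepsilon}(x_1, M)$ and $G_{\varepsilon}(x_2, M)$ are independent as soon as $\|x_1 - x_2\| > 40 M$. Summing over the $O(M^{2(d-1)})$ admissible pairs $(x_1, x_2)$ and using translation invariance (Proposition \ref{prop: invariance}) yields the quadratic term. The correction $\delta(M) := \mathbb{P}(\mathcal{B}_M^c)$ is bounded, via a union bound over the $O(M^{d-1})$ sources inside $\mathbb{B}(0, 10KM)$, by the super-polynomial tail of $R_T$ given by Lemma \ref{lem: tail distribution}.

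The initial condition at scale $M_0$ comes from a trivial first-moment estimate. When $\chi_{\varepsilon} \cap \mathbb{B}(0, 10 M_0) = \emptyset$, the localized model $\hat{\Sigma}_{\varepsilon}^{\text{\tiny{loc}}}(0, M_0)$ is empty, the cluster of $0$ in $\hat{\Sigma}_{\varepsilon}^{\text{\tiny{loc}}}(0, M_0) \cup \mathbb{B}(0, M_0)$ reduces to $\mathbb{B}(0, M_0) \subset \mathbb{B}(0, 8 M_0)$, and $G_{\varepsilon}(0, M_0)$ fails. Hence $\pi_{\varepsilon}(M_0) \leq p_{\varepsilon} \cdot \# \mathbb{B}(0, 10 M_0)$, which tends to $0$ as $\varepsilon \to 0$ by \eqref{eqn: p epsilon value}. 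One first fixes $M_0$ large enough to absorb $\delta(M_0)$, then selects $\varepsilon$ small enough to drive $a_0$ below the contraction threshold, and iterates.

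The main obstacle is the extraction step in the derivation of the recursive inequality. In the continuum Boolean model of \cite{gouere2009subcritical} the balls are independent by construction, whereas here the radii $R_T(z;\varepsilon)$ inherit long-range dependence from the shared infinite aggregate $A^{\dagger}_T[\infty]$. The localized construction of $\hat{\Sigma}_{\varepsilon}^{\text{\tiny{loc}}}(x, M)$ via $A^{\dagger}_T[\mathbb{B}(x, 20 M)]$ is precisely the device that restores independence once the separation exceeds $40 M$, and Proposition \ref{prop: model link} is what transfers the conclusion back from $\hat{\Sigma}_{\varepsilon}^{\text{\tiny{loc}}}$ to $\hat{\Sigma}_{\varepsilon}$. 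Calibrating the scale ratio $K$ so that two disjoint local failures always fit inside the long chain, and verifying that $\delta(M)$ decays fast enough (faster than $M^{-2(d-1)}$) to sustain the iteration, are the two technical points requiring the most care.
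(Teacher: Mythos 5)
Your overall strategy — a Gouéré-style multiscale renormalization anchored by a quadratic recursive inequality and a first-moment base case — is exactly the paper's, and your base-case estimate ($\pi_{\varepsilon}(M_0) \lesssim p_{\varepsilon} M_0^{d-1}$, small for $\varepsilon$ small) matches the paper's Lemma~\ref{lemma: pi bound}. There are two points of departure, one cosmetic and one substantive.

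The cosmetic one: your recursive inequality carries an extra factor $M^{2(d-1)}$ from union-bounding over all pairs of admissible centers in $\mathbb{B}(0,10KM)\cap\mathcal{H}$, and you then absorb it via the change of variable $a_k := M_k^{2(d-1)}\pi_{\varepsilon}(M_k)$. The paper avoids the factor entirely by observing that the pigeonhole extraction can be done with centers constrained to the rescaled spheres $\{Mc : c\in\mathbb{S}_{10}\}$ and $\{Mc' : c'\in\mathbb{S}_{80}\}$, whose cardinalities are constants depending only on $d$. The paper then factorizes the probability at the level of the two unions $\bigcup_{c}G_\varepsilon(Mc,M)$ and $\bigcup_{c'}G_\varepsilon(Mc',M)$ (which are independent since they depend on disjoint source sets) before taking a constant-size union bound on each, yielding the cleaner $\pi_\varepsilon(10M)\leq c\,\pi_\varepsilon(M)^2 + C/M^L$. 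Your bookkeeping works, but it is more delicate and requires verifying $M_{k+1}^{2(d-1)}\delta(M_k)\to 0$ along the geometric sequence, whereas the paper's iteration is a direct adaptation of Lemma 3.7 of \cite{gouere2009subcritical}.

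The substantive gap is in the extraction step. Your error event $\mathcal{B}_M$ controls only the radii sizes, but this is not enough to conclude that the two extracted sub-crossings actually witness $G_\varepsilon(x_1,M)$ and $G_\varepsilon(x_2,M)$. The cluster in $\hat{\Sigma}^{\text{\tiny{loc}}}_\varepsilon(0,KM)$ consists of balls whose radii are $R^{\text{\tiny{loc}}}_{T,0,KM}(z;\varepsilon)$, defined with respect to $A^{\dagger}_T[\mathbb{B}(0,20KM)]$; the event $G_\varepsilon(x_i,M)$ requires a cluster of balls with radii $R^{\text{\tiny{loc}}}_{T,x_i,M}(z;\varepsilon)$, defined with respect to the \emph{smaller} aggregate $A^{\dagger}_T[\mathbb{B}(x_i,20M)]$. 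Under the natural coupling one only has $A^{\dagger}_T[\mathbb{B}(x_i,20M)] \subset A^{\dagger}_T[\mathbb{B}(0,20KM)]$, hence $R^{\text{\tiny{loc}}}_{T,x_i,M}(z;\varepsilon) \leq R^{\text{\tiny{loc}}}_{T,0,KM}(z;\varepsilon)$ — an inequality in the \emph{wrong} direction. The extracted sub-piece might therefore fail to percolate once the radii are recomputed at the finer scale. The paper resolves this by adding to the good event a stabilization term $\Stab(0,100M)$ built from Theorem~\ref{thm: stab agg dagg}, which forces $A^{\dagger}_T[\mathbb{B}(x_i,20M)]$ and $A^{\dagger}_T[\mathbb{B}(0,20KM)]$ to agree on the region $\mathbb{Z}\times\mathbb{B}(x_i,10M)$ containing all relevant balls, and hence forces the two radii to coincide. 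This is precisely the point at which the aggregate stabilization result earns its keep in the multiscale argument, and your $\delta(M)$ must be enlarged to include $\mathbb{P}(\Stab(0,100M)^c)$ — which, thanks to Theorem~\ref{thm: stab agg dagg}, still decays faster than any power of $M^{-1}$, so the rest of your iteration goes through once this is added.
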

\begin{proof}[Proof of Proposition \ref{prop:NoPerco}]
Taking $\varepsilon > 0$ small enough according to Proposition \ref{prop: pi inequality}, the two previous results imply that
\begin{eqnarray*}
	\lim_{M \to \infty} \mathbb{P} \big( \text{diam}\,\hat{C}_{\varepsilon}(0) \geq 8M \big) & = & \liminf_{M \to \infty} \mathbb{P} \big( \text{diam}\,\hat{C}_{\varepsilon}(0) \geq 8M \big) \\
	& \leq & \liminf_{M \to \infty} \pi_{\varepsilon}(M) \; = \; 0 ~,
\end{eqnarray*}
meaning that the source $0$ almost surely belongs to a finite connected component in $\hat{\Sigma}_{\varepsilon}$. This discrete Boolean model being translation invariant in distribution, we conclude that it a.s. admits only finite connected components. I.e. $\hat{\Sigma}_{\varepsilon}$ does not percolate with probability $1$. 
\end{proof}

The next two sections are devoted to the proofs of Propositions \ref{prop: model link} and \ref{prop: pi inequality}.

\subsection{Proof of Proposition~\ref{prop: model link}}

Let us introduce for any $M \geq 1$ the event $H(M)$ defined by
\[
H(M) := \Big\{ \mbox{each ball of $\hat{\Sigma}_{\varepsilon}$ intersecting $\mathbb{B}(0,10M)$ has a radius smaller than $M$} \Big\} ~.
\]
The following lemma gives a control for the probability of $H(M)$.

\begin{lemma}
\label{lemma: radius control}
For any $L \geq 1$, there exists a positive constant $C = C(d,\varepsilon,L)$ such that for any $M\geq 1$, 
\[
\mathbb{P} \big( H(M)^c \big) \leq \frac{C}{M^L} ~.
\]
\end{lemma}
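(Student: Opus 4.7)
The plan is to write $H(M)^c$ as a union over source locations and then apply a union bound together with the tail estimate of Lemma~\ref{lem: tail distribution}. Explicitly,
\[
H(M)^c = \bigcup_{z \in \mathcal{H}} \Bigl\{ z \in \chi_{\varepsilon},\ R_T(z;\varepsilon) \geq M,\ \|z\| \leq 10M + R_T(z;\varepsilon) \Bigr\},
\]
since a ball $\mathbb{B}(z, R_T(z;\varepsilon))$ intersects $\mathbb{B}(0,10M)$ exactly when $\|z\| \leq 10M + R_T(z;\varepsilon)$. By translation invariance of the underlying Poisson point processes and random walks (and of $A^{\dagger}_T[\infty]$, Proposition~\ref{prop: invariance}), the radius $R_T(z;\varepsilon)$ has the same distribution as $R_T(0;\varepsilon)$, and since $R_T(z;\varepsilon) \leq R_T(z)$, Lemma~\ref{lem: tail distribution} provides, for any $L' > 0$, a constant $C_{L'}$ with $\mathbb{P}(R_T(z;\varepsilon) \geq r) \leq C_{L'} r^{-L'}$ for all $z \in \mathcal{H}$ and $r \geq 1$.

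Next, I would split the union into a near and a far regime at, say, $\|z\| = 11M$. For the near regime $\|z\| \leq 11M$, I use the crude bound $\mathbb{P}(z\in\chi_\varepsilon)\leq 1$ together with $\mathbb{P}(R_T(z;\varepsilon)\geq M) \leq C_{L'} M^{-L'}$; since $\#\{z \in \mathcal{H}: \|z\| \leq 11M\} \leq C_d M^{d-1}$, the union bound yields a contribution bounded by $C\,M^{d-1-L'}$, which is $\leq C/M^L$ provided $L' \geq L + d - 1$.

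For the far regime $\|z\| > 11M$, the geometric constraint $\|z\| \leq 10M + R_T(z;\varepsilon)$ forces $R_T(z;\varepsilon) \geq \|z\| - 10M \geq \|z\|/11$. Applying the tail bound at the level $\|z\|/11$ and summing over $z \in \mathcal{H}$ by spherical shells (there are at most $C_d \ell^{d-2}$ sources of $\mathcal{H}$ with $\|z\| = \ell$), I obtain
\[
\sum_{\ell > 11M} C_d\, \ell^{d-2} \cdot C_{L'} (\ell/11)^{-L'} \;\leq\; C \sum_{\ell > 11M} \ell^{d-2-L'},
\]
which is bounded by $C/M^L$ for $L'$ chosen sufficiently large (e.g. $L' = L + d$). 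Summing the two regimes yields the announced bound $\mathbb{P}(H(M)^c) \leq C/M^L$.

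The only subtle point is the far-regime argument: one must verify that the distributional tail bound of Lemma~\ref{lem: tail distribution} is indeed uniform in $z \in \mathcal{H}$, which is exactly what translation invariance of $A^{\dagger}_T[\infty]$ provides. The union bound then handles everything routinely, the two regimes each being summable to the required polynomial accuracy as soon as $L'$ is taken large. No multiscale argument or percolation input is needed here — the lemma is purely a consequence of the polynomial tail control on the radii $R_T(z;\varepsilon)$ combined with counting sources on $\mathcal{H}$.
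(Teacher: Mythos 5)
Your proof is correct, but it takes a different route from the paper for the delicate part of the estimate. The paper also splits $H(M)^c$ into a ``near'' and a ``far'' contribution (at level $20M$ rather than your $11M$): for balls centered in $\mathbb{B}(0,20M)$ it does exactly what you do, a union bound over $O(M^{d-1})$ sources combined with the tail estimate of Lemma~\ref{lem: tail distribution}. For balls centered outside $\mathbb{B}(0,20M)$, however, the paper does not reuse the radii tails at all: it intersects with the complement of the cone event $\Over^{\dagger}_{\alpha}(10M,T,\delta)$ (Proposition~\ref{thm: global upper bound}) and observes that a far particle whose ball reaches $\mathbb{B}(0,10M)$ must then visit $\mathbb{Z}_{10M}$ before exiting the cone, i.e.\@\xspace the donut event $D_{10M}^c$ of Lemma~\ref{prop: particle stab} occurs, whose probability decays faster than any power of $M^{-1}$. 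You instead exploit the geometric constraint $R_T(z;\varepsilon)\geq \|z\|-10M\geq \|z\|/11$ for far centers and sum the uniform polynomial tail (valid by translation invariance, as the paper itself notes when it asserts the $R_T(z)$'s are identically distributed) over spherical shells of $\mathcal{H}$ of cardinality $O(\ell^{d-2})$ -- this is essentially the computation the paper performs in the proof of Lemma~\ref{lem: finite intersections}. Both arguments are sound and ultimately rest on the same machinery, since the proof of Lemma~\ref{lem: tail distribution} already packages the cone plus donut estimates; your version is more economical at this stage because it treats that lemma as a black box instead of re-invoking Proposition~\ref{thm: global upper bound} and Lemma~\ref{prop: particle stab}. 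Only cosmetic caveats remain: the tail bound of Lemma~\ref{lem: tail distribution} is stated for integer thresholds (apply it to $\lfloor \ell/11\rfloor$), and your constant inevitably depends on $L$ through the choice of $L'$, but that matches the paper's own (implicit) convention in statements of the form $C/M^L$.
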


Let us first prove Proposition~\ref{prop: model link} from Lemma \ref{lemma: radius control} and in a second step, Lemma \ref{lemma: radius control} will be proven.

\begin{proof}[Proof of Proposition~\ref{prop: model link}]
We define the event 
\[
\mathcal{G}(M) := H(M) \cap \left\{ A^{\dagger}_T[\infty]\cap\mathbb{Z}_{10M} = A^{\dagger}_T[\mathbb{B}(0,20M)]\cap\mathbb{Z}_{10M} \right\}.
\]
Now, we use the stabilization result Theorem~\ref{thm: stab agg dagg} which, combined to Lemma~\ref{lemma: radius control}, provides the following control on the probability of $\mathcal{G}(M)$:
\[
\mathbb{P} \left(\mathcal{G}(M)\right) \geq 1 - \frac{C}{M^L},
\]
for some positive constant $C$ and for any $M,L\geq 1$. Recalling that $\pi_{\varepsilon}(M)$ denotes the probability of $G_{\varepsilon}(0,M)$, it is then sufficient to prove the inclusion: 
\[
\Big\{ \text{diam}\,\hat{C}_{\varepsilon}(0) \geq 8M \Big\} \cap \mathcal{G}(M) \subset G_{\varepsilon}(0,M) ~.
\]
Let us assume that the event $\{\text{diam}\,\hat{C}_{\varepsilon}(0) \geq 8M \} \cap \mathcal{G}(M)$ holds. This implies the existence of a cluster $\mathcal{C}$ of $\hat{\Sigma}_{\varepsilon}$ containing $0$ and going beyond $\mathbb{B}(0,8M)$. Each ball of $\mathcal{C}$ overlaps $\mathbb{B}(0,8M)$ and then has a radius smaller than $M$ thanks to $H(M)$. So they are included in $\mathbb{B}(0,10M)$ and their centers belong to $\chi_{\varepsilon} \cap \mathbb{B}(0,10M)$.

Given a vertex $z$, the radii $R^\text{\tiny{loc}}_{T}(z;\varepsilon)$ and $R_T(z;\varepsilon)$ used resp. for $\hat{\Sigma}_{\varepsilon}^\text{\tiny{loc}}$ and $\hat{\Sigma}_{\varepsilon}$ are possibly different since they resp. refer to $A^{\dagger}_T[\mathbb{B}(0,20M)]$ and $A^{\dagger}_T[\infty]$. However, we prove that on the event $\{\text{diam}\,\hat{C}_{\varepsilon}(0) \geq 8M \} \cap \mathcal{G}(M)$, these radii are equal for the balls involved in the cluster $\mathcal{C}$. Indeed, these balls are included in $\mathbb{B}(0,10M)$ and, on $\mathcal{G}(M)$, the aggregates $A^{\dagger}_T[\infty]$ and $A^{\dagger}_T[\mathbb{B}(0,20M)]$ coincide on $\mathbb{Z}_{10M}$. So the radii $R^\text{\tiny{loc}}_{T}(z;\varepsilon)$ and $R_T(z;\varepsilon)$ coincide for each center $z$ of balls involved in $\mathcal{C}$. Therefore, on $\{\text{diam}\,\hat{C}_{\varepsilon}(0) \geq 8M \} \cap \mathcal{G}(M)$, there exists a cluster of balls of $\hat{\Sigma}_{\varepsilon}$ such that:
\begin{itemize}
	\item all these balls have their centers in $\chi_{\varepsilon} \cap \mathbb{B}(0,10M)$;
	\item all these balls have their radii given by $R^\text{\tiny{loc}}_{T}(z;\varepsilon)$;
	\item the cluster contains $0$ and goes beyond $\mathbb{B}(0,8M)$. 
\end{itemize}
Then, this cluster of balls is also a cluster of $\hat{\Sigma}_{\varepsilon}^\text{\tiny{loc}}$ and the event $G_{\varepsilon}(0,M)$ occurs.
\end{proof}

\begin{proof}[Proof of Lemma~\ref{lemma: radius control}]
Fix $M,L \geq 1$. Let us introduce the following events:
\[
\Centered(M) := \left\lbrace\begin{array}{c}
	\mbox{ There exists a ball of $\hat{\Sigma}_{\varepsilon}$ centered \textbf{inside} $\mathbb{B}(0, 20M)$} \\
	\mbox{ that intersects $\mathbb{B}(0,10M)$ with a radius greater than $M$ }
\end{array}
\right\rbrace ,
\]
\[
\Uncentered(M) := \left\lbrace\begin{array}{c}
	\mbox{ There exists a ball of $\hat{\Sigma}_{\varepsilon}$ centered \textbf{outside} $\mathbb{B}(0, 20M)$} \\
	\mbox{ that intersects $\mathbb{B}(0,10M)$}
\end{array}
\right\rbrace ~.
\]
Hence, the event $H(M)^c$ can be written as the union $\Centered(M) \cup \Uncentered(M)$ and we have to work with the probability of these two events. We begin by handling the event $\Centered(M)$. Let us write:
\begin{align*}
	\mathbb{P}\left(\Centered(M)\right) &\leq \mathbb{P}\left(\bigcup_{z \in \chi_{\varepsilon} \cap \mathbb{B}(0, 20M)} \big\{ R_T(z;\varepsilon) \geq M \big\} \right) \\
	& \leq \sum_{z \in \mathbb{B}\left( 0, 20M \right)} \mathbb{P}\left( R_T(z;\varepsilon) \geq M \right) \\
	& \leq C_d  M^{d-1} \mathbb{P}\left( R_T(0;\varepsilon) \geq M \right) ~.
\end{align*}
Using Lemma \ref{lem: tail distribution}, $\mathbb{P}\left(R_T(0;\varepsilon) \geq M\right)$ decreases faster than any power of $M^{-1}$, which handles this case.

We switch our focus to the event $\Uncentered(M)$ which is trickier to handle since it deals with an infinite number of particles outside of $\mathbb{B}(0,20M)$. Let us recall the event
\[
\Over^{\dagger}_{\alpha}(10M,T,\delta) = \big\{ A^{\dagger}_T[\infty] \cap \mathbb{Z}_{10M}^c \nsubset \mathscr{C}_{\delta}^{\alpha} \big\}
\]
introduced in Section \ref{sect:resultsAdag}, where
\[
\mathscr{C}_{\delta}^{\alpha} = \bigcup_{\ell \geq 0} \left\{ z \in \mathbb{Z}^d,\ \|p_{\mathcal{H}}(z)\| = \ell,\ |z_1| \leq \delta \ell^{\alpha}  \right\} ~.
\]
For $\delta > 0$ and $\alpha \in (1-1/d,1)$, we know thanks to the global upper bound Proposition \ref{thm: global upper bound} that the probability of $\Over^{\dagger}_{\alpha}(10M,T,\delta)$ decreases faster than any power of $M^{-1}$. So we can restrict our attention to the event $\Uncentered(M) \cap \Over^{\dagger}_{\alpha}(10M,T,\delta)^c$. The event $\Uncentered(M)$ provides the existence of a ball of the Boolean model $\hat{\Sigma}_{\varepsilon}$ that intersects $\mathbb{B}(0,10M)$ and whose center is beyond level $20M$. This ball is due to a particle starting (during the time interval $[0,\varepsilon]$) from a source beyond level $20M$ and visiting the strip $\mathbb{Z}_{10M}$ before exiting the aggregate $A^{\dagger}_T[\infty]$. Thanks to $\Over^{\dagger}_{\alpha}(10M,T,\delta)^c$, we can assert that the random walk associated to that particle starts from a level greater than $20M$ and visits $\mathbb{Z}_{10M}$ before exiting the cone $\mathscr{C}_{\delta}^{\alpha}$. This implies the event $D_{10M}^c$ introduced in Section \ref{subsec: proof of stab agg dagg} whose probability is smaller than any power of $M^{-1}$ thanks to Lemma \ref{prop: particle stab}. This concludes the proof.
\end{proof}

\subsection{Proof of Proposition~\ref{prop: pi inequality}}

This section is an adaptation of \cite{gouere2009subcritical}. Lemmas \ref{lemma: pi squared} and \ref{lemma: pi bound} together imply, by induction, that $\pi_{\varepsilon}(10^n M) \to 0$ as $n \to \infty$ for some $M$ and $\varepsilon$ small enough. Lemma \ref{lemma: pi squared} is the induction step, allowing to go from scale $10^n M$ to $10^{n+1} M$, while Lemma \ref{lemma: pi bound} is the base step.

\begin{lemma}
\label{lemma: pi squared}
For any $L\geq 1$, there exist positive constants $c=c(d,L)$ and $C=C(T,d,L)$ such that for any $M\geq 1$, 
\[
\pi_{\varepsilon}(10M) \leq c \pi_{\varepsilon}(M)^2 + \frac{C}{M^L} ~.
\]
\end{lemma}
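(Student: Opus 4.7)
I follow the renormalization scheme of Gouéré~\cite{gouere2009subcritical}: on a suitable good event, the occurrence of $G_{\varepsilon}(0, 10M)$ forces two independent $G_{\varepsilon}(\cdot, M)$ events at well-separated centers. The key ingredients are the radius control of Lemma~\ref{lemma: radius control}, the aggregate stabilization of Theorem~\ref{thm: stab agg dagg}, and translation invariance.

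\textbf{Good event.} I first define $\mathcal{E}(M)$ as the intersection of two conditions: (i) every ball of $\hat{\Sigma}_{\varepsilon}$ meeting $\mathbb{B}(0, 90M)$ has radius $\leq M$ (proved as in Lemma~\ref{lemma: radius control} at scale $90M$); (ii) for every $u$ in a fixed grid $\mathcal{L}_M \subset \mathbb{B}(0, 80M) \cap \mathcal{H}$ of spacing $M$ (so $|\mathcal{L}_M| \leq C_d$) one has $A^{\dagger}_T[\mathbb{B}(u, 20M)] \cap \mathbb{B}(u, 10M) = A^{\dagger}_T[\infty] \cap \mathbb{B}(u, 10M)$, together with the analogous equality $A^{\dagger}_T[\mathbb{B}(0, 200M)] \cap \mathbb{B}(0, 90M) = A^{\dagger}_T[\infty] \cap \mathbb{B}(0, 90M)$. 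Condition (ii) follows from a translated version of Theorem~\ref{thm: stab agg dagg} combined with a finite union bound, yielding $\mathbb{P}(\mathcal{E}(M)^c) \leq C M^{-L}$.

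\textbf{Extraction of two scale-$M$ events.} On $G_{\varepsilon}(0, 10M) \cap \mathcal{E}(M)$, the cluster of $0$ contains a chain of overlapping balls of $\hat{\Sigma}_{\varepsilon}^\text{\tiny{loc}}(0, 10M)$ with centers $y_0, \ldots, y_k$, with $\|y_0\| \leq 11M$, $\|y_k\| \geq 79M$, and consecutive centers at distance $\leq 2M$ (thanks to~(i)). Pick $y_i$ to be the first center with $\|y_i\| \geq 20M$, and $y_\ell$ the first center after $y_i$ with $\|y_\ell - y_i\| \geq 45M$; let $u_1, u_2 \in \mathcal{L}_M$ be the nearest grid points (so $\|u_1 - u_2\| > 40M$). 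Walking backwards from $y_i$ along the chain until the first exit of $\mathbb{B}(u_1, 8M)$ produces a sub-chain whose centers all lie in $\mathbb{B}(u_1, 10M)$ and whose final ball pokes outside $\mathbb{B}(u_1, 8M)$; an analogous construction at $u_2$ yields a sub-chain in $\mathbb{B}(u_2, 10M)$. Condition (ii) of $\mathcal{E}(M)$ guarantees that the radii of these sub-chains coincide with their counterparts in $\hat{\Sigma}_{\varepsilon}^\text{\tiny{loc}}(u_1, M)$ and $\hat{\Sigma}_{\varepsilon}^\text{\tiny{loc}}(u_2, M)$, so $G_{\varepsilon}(u_1, M) \cap G_{\varepsilon}(u_2, M)$ holds.

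\textbf{Independence, union bound, and main obstacle.} Since $\mathbb{B}(u_1, 20M) \cap \mathbb{B}(u_2, 20M) = \emptyset$, the events $G_{\varepsilon}(u_i, M)$ are independent (their defining random inputs, the \textsc{ppp}'s $\mathcal{N}_z$ and the walks $S_{z,t}$ for $z$ in the respective source-balls, live on disjoint sets), and each has probability $\pi_{\varepsilon}(M)$ by translation invariance. A union bound over the $O(1)$ admissible pairs in $\mathcal{L}_M^2$ gives $\mathbb{P}(G_{\varepsilon}(0, 10M) \cap \mathcal{E}(M)) \leq c \, \pi_{\varepsilon}(M)^2$ with $c = c(d)$, and combining with $\mathbb{P}(\mathcal{E}(M)^c) \leq C M^{-L}$ finishes the proof. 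The main obstacle is the matching of radii across the two scales in the extraction step: one must verify that $A^{\dagger}_T[\mathbb{B}(0, 200M)]$ and $A^{\dagger}_T[\mathbb{B}(u_i, 20M)]$ both coincide with $A^{\dagger}_T[\infty]$ on the small balls $\mathbb{B}(y_j, M)$ where the sub-chain walks evolve. This requires a translated, ball-based analogue of Theorem~\ref{thm: stab agg dagg}, whose proof mirrors Section~\ref{subsec: proof of stab agg dagg} (combining Proposition~\ref{thm: global upper bound} with the donut construction of Lemma~\ref{prop: particle stab}) but centered at a shifted point rather than at the origin.
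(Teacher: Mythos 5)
Your proposal is correct and follows essentially the same route as the paper's proof: a good event combining a radius bound (as in Lemma~\ref{lemma: radius control}) with translated copies of the stabilization statement of Theorem~\ref{thm: stab agg dagg}, then extraction of two well-separated scale-$M$ crossing events whose localized radii are shown to agree with those of $\hat{\Sigma}_{\varepsilon}^\text{\tiny{loc}}(u_i,M)$, independence coming from disjoint source balls, and a union bound over $O(1)$ centers. The only differences are cosmetic: the paper covers the spheres of radii $10M$ and $80M$ by balls and multiplies the probabilities of the two unions rather than summing over grid pairs, and your ``translated, ball-based analogue'' of Theorem~\ref{thm: stab agg dagg} needs no separate proof --- it follows from the theorem, the natural-coupling sandwich and translation invariance of the model (applied to cylinders $\mathbb{Z}\times\mathbb{B}(u,10M)$ rather than the $(d-1)$-dimensional balls you write), exactly as in the paper's event $\Stab(0,100M)$.
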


\begin{lemma}
\label{lemma: pi bound}
There exists a positive constant $C'=C'(d)$ such that for all $M \geq 1$ and all $\varepsilon >0$:
\[
\pi_{\varepsilon}(M) \leq \varepsilon C' M^{d-1} ~.
\]
\end{lemma}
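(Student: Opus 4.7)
The plan is to observe that $G_\varepsilon(0,M)$ forces at least one source in $\mathbb{B}(0,10M)$ to emit during the short time window $[0,\varepsilon]$, and then estimate this event by a simple union bound over $\mathbb{B}(0,10M)$, using the fact that each source emits with probability $p_\varepsilon = 1 - e^{-\varepsilon} \leq \varepsilon$.

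More precisely, I would first note the inclusion
\[
G_\varepsilon(0,M) \subset \{\chi_\varepsilon \cap \mathbb{B}(0,10M) \neq \emptyset\}.
\]
Indeed, if $\chi_\varepsilon \cap \mathbb{B}(0,10M) = \emptyset$, then by definition $\hat{\Sigma}_\varepsilon^\text{\tiny{loc}}(0,M) = \emptyset$, and the connected component of $0$ in $\hat{\Sigma}_\varepsilon^\text{\tiny{loc}}(0,M) \cup \mathbb{B}(0,M)$ reduces to $\mathbb{B}(0,M)$, which is contained in $\mathbb{B}(0,8M)$. Hence on the complementary event $G_\varepsilon(0,M)$ cannot occur.

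Next, since the random variables $(Y_z)_{z \in \mathcal{H}}$ are Bernoulli with common parameter $p_\varepsilon = 1 - e^{-\varepsilon} \leq \varepsilon$ (see \eqref{eqn: p epsilon value}), a union bound gives
\[
\mathbb{P}\bigl(\chi_\varepsilon \cap \mathbb{B}(0,10M) \neq \emptyset\bigr) \leq \sum_{z \in \mathbb{B}(0,10M)} \mathbb{P}(Y_z = 1) = \lvert \mathbb{B}(0,10M) \rvert \cdot p_\varepsilon.
\]
Recalling that $\mathbb{B}(0,r) = \mathcal{H}_r = \{0\} \times \llbracket -r, r \rrbracket^{d-1}$ has cardinality $(2r+1)^{d-1}$, we get $\lvert \mathbb{B}(0,10M) \rvert = (20M+1)^{d-1} \leq 21^{d-1} M^{d-1}$ for every $M \geq 1$. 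Combining the two displayed estimates yields
\[
\pi_\varepsilon(M) \leq 21^{d-1} M^{d-1} \varepsilon,
\]
so one may take $C' = 21^{d-1}$.

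There is no real obstacle: the statement is essentially an elementary Markov/union bound, and the key observation is simply that the localized Boolean model is empty whenever no source in the $(d-1)$-dimensional ball $\mathbb{B}(0,10M)$ fires during $[0,\varepsilon]$. The only point to watch is that all factors of $M$ come from the volume of $\mathbb{B}(0,10M)$ (which is $(d-1)$-dimensional, hence the exponent $d-1$ rather than $d$), matching the statement exactly.
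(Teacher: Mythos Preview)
Your proof is correct and follows essentially the same approach as the paper: both observe that $G_\varepsilon(0,M)$ forces $\chi_\varepsilon \cap \mathbb{B}(0,10M)$ to be non-empty, and then bound this probability by $\#\mathbb{B}(0,10M)\cdot p_\varepsilon \leq C_d M^{d-1}\varepsilon$. The paper phrases the second step as a first-moment (Markov) bound while you write it as a union bound, but these are the same computation.
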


Let us prove Proposition~\ref{prop: pi inequality} from Lemmas \ref{lemma: pi squared} and \ref{lemma: pi bound}.

\begin{proof}[Proof of Proposition~\ref{prop: pi inequality}]
This is an adaptation of Lemma 3.7 of \cite{gouere2009subcritical}. Setting $f(M) := c \pi_{\varepsilon}(M)$ and $g(M) := 10 c C / M$, Lemma \ref{lemma: pi squared} provides
\begin{equation}
	\label{eqn: f squared}
	f(10M) \leq f(M)^2 + g(10M) ~.
\end{equation}
Since $g(M) \to 0$, we can pick $M_0$ such that, for any $M \geq M_0$, $g(M) \leq 1/4$. Thus, using Lemma~\ref{lemma: pi bound}, there exists $\varepsilon_0 = \varepsilon_0(M_0) > 0$ small enough such that, for $\varepsilon \leq \varepsilon_0$ and $M \leq M_0$,
\[
f(M) = c \pi_{\varepsilon}(M) \leq c \varepsilon_0 C' M_0^{d-1} \leq 1/2 ~.
\]
So, the function $f$ is bounded by $1/2$ on the interval $(0,M_0]$. Let us first extend this bound on $(M_0, 10M_0]$. To do it, let us fix $\varepsilon \in (0,\varepsilon_0)$. Thanks to \eqref{eqn: f squared}, we can write for any $M \in (M_0, 10M_0]$:
\[
f(M) \leq f(M/10)^2 + g(M) \leq \Big( \frac{1}{2} \Big)^2 + \frac{1}{4} = \frac{1}{2} ~.
\]
Iterating this argument, we prove by induction that $f(M) \leq 1/2$ for any $M > 0$.

As a consequence, we deduce from \eqref{eqn: f squared} that, for any $M > 0$,
\[
f(10M) \leq \frac{1}{2} f(M) + g(10M) \leq \frac{1}{2} + g(10M) ~,
\]
from which is not difficult to get, once again by induction, that the following holds for any integer $n$,
\begin{equation}
	\label{eqn: f inequality}
	f(10^n M) \leq \frac{1}{2^{n}} + g(10^n M) + \frac{g(10^{n-1}M)}{2} + \ldots + \frac{g(10 M)}{2^{n-1}} ~.
\end{equation}
Henceforth, using (\ref{eqn: f inequality}) and $g(10^nM) \to 0$ as $n \to \infty$, we prove that $f(10^nM)$ tends to $0$ as $n \to \infty$, which is the searched result.
\end{proof}

\begin{proof}[Proof of Lemma~\ref{lemma: pi squared}]
Let us first consider the event $F(M)$ defined by
\[
F(M) := \left\{ \forall z \in \chi_{\varepsilon}\cap \mathbb{B}(0,100M) , \, R_{T}(z;\varepsilon) \leq M \right\}
\]
for which, any ball of $\hat{\Sigma}_\varepsilon$ centered at some $z \in \chi_{\varepsilon}\cap \mathbb{B}(0,100M)$ has a radius $R_{T}(z;\varepsilon)$ smaller than $M$. Since these radii are identically distributed and satisfy Lemma \ref{lem: tail distribution}), the event $F(M)$ is very likely. For any $L \geq 1$, there exists a constant $C > 0$ such that for any $M \geq 1$, 
\begin{equation}
	\label{F(M)likely}
	\mathbb{P} \big( F(M)^c \big) \leq \frac{C}{M^L} ~.
\end{equation}

Besides, for $M \geq 1$, let us consider the event
\[
\Stab(0, 100M) := \bigcap_{z \in \mathbb{B}(0,100M)} \Stab^{M}(z)
\]
where, for any $z \in \mathcal{H}$,
\[
\Stab^M(z) := \left\lbrace \begin{array}{c}
	A^{\dagger}_T[\mathbb{B}(z,20M)] \cap \left(\mathbb{Z}\times\mathbb{B}(z,10M)\right) = A^{\dagger}_T[S] \cap \left(\mathbb{Z}\times\mathbb{B}(z,10M)\right),\\
	\text{for any $S \subset \mathcal{H}$ such that $\mathbb{B}(z,20M) \subset S$.} \end{array}
\right\rbrace ~.
\]
In particular, $\Stab^M(0)$ means that $A^{\dagger}_T[\mathbb{B}(0,20M)]$ (also denoted by $A^{\dagger}_T[20M]$ for short) and $A^{\dagger}_T[S]$ coincide on $\mathbb{Z}_{10M}$, for any $\mathcal{H}_{20M} \subset S \subset \mathcal{H}$. Let us prove that for any $L \geq 1$, there exists a constant $C = C(L) > 0$ such that for any $M \geq 1$, 
\begin{equation}
	\label{Stab-likely}
	\mathbb{P} \big( \Stab(0,100M)^c \big) \leq \frac{C}{M^L} ~.
\end{equation}
Let us start by writing, using translation invariance of the aggregates $A^{\dagger}_T[\cdot]$, 
\[
\mathbb{P} \big( \Stab(0,100M)^c \big) \leq \sum_{z \in \mathbb{B}(0,100M)} \mathbb{P} \big( \Stab^M(z)^c \big) \leq c (100M)^{d-1} \mathbb{P} \big( \Stab^M(0)^c \big) ~.
\]
So it is sufficient to show that $\Stab^M(0)^c$ has a probability decreasing faster than any power of $M^{-1}$. The \textit{Aggregate stabilization} result Theorem \ref{thm: stab agg dagg} asserts that, with probability larger than $1-C M^{-L}$, the aggregates $A^{\dagger}_T[20M]$ and $A^{\dagger}_T[\infty]$ coincide on the strip $\mathbb{Z}_{10M}$. Then the same holds for $A^{\dagger}_T[20M]$ and $A^{\dagger}_T[S]$, for any $\mathcal{H}_{20M} \subset S \subset \mathcal{H}$ since $A^{\dagger}_T[20M] \subset A^{\dagger}_T[S] \subset A^{\dagger}_T[\infty]$ by the natural coupling. So $\Stab^M(0)$ occurs with probability larger than  $1-C M^{-L}$, and then (\ref{Stab-likely}) is proven.

Let $\mathbb{S}_r$ be the $(d-2)$-dimensional sphere centered at the origin, with radius $r$ and included in the source set $\mathcal{H}$: $\mathbb{S}_r := \{z \in \mathcal{H} : \, \|z\| = r\}$. We claim that the following key inclusion holds for any $M$,
\begin{equation}
	\label{eqn: pi squared}
	G_{\varepsilon}(0,10M) \cap F(M) \cap \Stab(0,100M) \subset \Big( \bigcup_{c \in \mathbb{S}_{10}} G_{\varepsilon}(Mc,M) \Big) \cap \Big( \bigcup_{c' \in \mathbb{S}_{80}} G_{\varepsilon}(Mc',M) \Big) ~.
\end{equation} 
Lemma~\ref{lemma: radius control} actually appears as a straight consequence of the inclusion (\ref{eqn: pi squared}) combined with (\ref{F(M)likely}) and (\ref{Stab-likely}). Let us first explain why and, in a second step, we will establish (\ref{eqn: pi squared}).

Inequalities (\ref{F(M)likely}) and (\ref{Stab-likely}) allow to write
\begin{eqnarray*}
	\pi_\varepsilon(10M) = \mathbb{P} \big( G_{\varepsilon}(0,10M) \big) & \leq & \mathbb{P} \big( G_{\varepsilon}(0,10M) \cap F(M) \cap \Stab(0,100M) \big) + \frac{C}{M^L} \\
	& \leq & \mathbb{P} \Big( \Big( \bigcup_{c\in\mathbb{S}_{10}}G_{\varepsilon}(Mc,M) \Big) \cap \Big( \bigcup_{c'\in\mathbb{S}_{80}}G_{\varepsilon}(Mc',M) \Big) \Big) + \frac{C}{M^L} ~.
\end{eqnarray*}
Recall that the event $G_{\varepsilon}(Mc,M)$ involves balls of $\hat{\Sigma}_{\varepsilon}^\text{\tiny{loc}}$ whose centers are included in $\mathbb{B}(Mc,10M)$ and radii are defined w.r.t. the aggregate $A^{\dagger}_T[\mathbb{B}(Mc,20M)]$. So the event $\bigcup_{c\in\mathbb{S}_{10}} G_{\varepsilon}(Mc,M)$ only concerns random inputs (i.e. Poisson clocks and random walks) associated to sources of $\mathcal{H}_{30M}$. In the same way, $\bigcup_{c'\in\mathbb{S}_{80}} G_{\varepsilon}(Mc',M)$ only concerns random inputs associated to sources outside of $\mathcal{H}_{59M}$. So they are independent from each other. It is then easy to conclude:
\begin{eqnarray*}
	\pi_{\varepsilon}(10M) & \leq & \mathbb{P} \Big( \bigcup_{c\in\mathbb{S}_{10}} G_{\varepsilon}(Mc,M) \Big) \, \mathbb{P} \Big( \bigcup_{c'\in\mathbb{S}_{80}} G_{\varepsilon}(Mc',M) \Big) + \frac{C}{M^L} \\
	& \leq & |\mathbb{S}_{10}| |\mathbb{S}_{80}| \pi_{\varepsilon}(M)^2 + \frac{C}{M^L} ~.
\end{eqnarray*}

As a consequence, it only remains to establish the inclusion (\ref{eqn: pi squared}) and, to do it, let us assume that $G_{\varepsilon}(0,10M) \cap F(M) \cap \Stab(0,100M)$ occurs. On the event $G_{\varepsilon}(0,10M)$, the localized Boolean model
\[
\hat{\Sigma}_{\varepsilon}^\text{\tiny{loc}}(0,10M) = \bigcup_{z \in \chi_{\varepsilon} \cap \mathbb{B}(0,100M)} \mathbb{B} \Big( z, R^\text{\tiny{loc}}_{T,0,10M}(z;\varepsilon) \Big)
\]
contains a cluster $\mathcal{C}$ joining $\mathbb{B}(0,10M)$ to $\mathbb{B}(0,80M)^c$. The balls of $\hat{\Sigma}_{\varepsilon}^\text{\tiny{loc}}(0,10M)$ are centered at vertices $z$ in $\mathbb{B}(0,100M)$ and their radii $R^\text{\tiny{loc}}_{T,0,10M}(z;\varepsilon)$ are by definition relative to the aggregate $A^{\dagger}_T[\mathbb{B}(0,200M)]$. Since $A^{\dagger}_T[\mathbb{B}(0,200M)] \subset A^{\dagger}_T[\infty]$, we have that, for any $z \in  \chi_{\varepsilon} \cap \mathbb{B}(0,100M)$, $R^\text{\tiny{loc}}_{T,0,10M}(z;\varepsilon) \leq R_{T}(z;\varepsilon) \leq M$ on the event $F(M)$. This means that all the balls of $\mathcal{C}$ have radii smaller than $M$. We can then extract from $\mathcal{C}$ a sub-cluster, say $\mathcal{C}'$, joining $\mathbb{B}(Mc,M)$ for some (random) $c \in \mathbb{S}_{10}$ to $\mathbb{B}(Mc,8M)^c$. 	Indeed, the sphere $\mathbb{S}_{10M}$ is covered by the union of balls $\mathbb{B}(Mc,M)$, with $c\in \mathbb{S}_{10}$. Now, we have to prove that $\mathcal{C}'$ is also a cluster of $\hat{\Sigma}_{\varepsilon}^\text{\tiny{loc}}(Mc,M)$, ensuring the occurrence of $G_{\varepsilon}(Mc,M)$. Hence, we have to prove that each ball $\mathbb{B}(z,R^\text{\tiny{loc}}_{T,0,10M}(z;\varepsilon))$ involved in $\mathcal{C}'$ satisfies the two following properties:
\begin{itemize}
	\item[$\bullet$] Its center $z$ belongs to $\chi_{\varepsilon} \cap \mathbb{B}(Mc,10M)$. This is clear since, by construction, each ball of $\mathcal{C}'$ overlaps $\mathbb{B}(Mc,8M)$ and has a radius smaller than $M$.
	\item[$\bullet$] Its radius $R^\text{\tiny{loc}}_{T,0,10M}(z;\varepsilon)$ is actually equal to $R^\text{\tiny{loc}}_{T,Mc,M}(z;\varepsilon)$. This is where the event $\Stab(0,100M)$ comes into play. The previous item implies that the ball $\mathbb{B}(z,R^\text{\tiny{loc}}_{T,0,10M}(z;\varepsilon))$ is completely included in $\mathbb{B}(Mc,10M)$. Its radius is defined w.r.t. the aggregate $A^{\dagger}_T[\mathbb{B}(0,200M)]$ (see (\ref{defi-Rloc})), but only through
	\[
	A^{\dagger}_T[\mathbb{B}(0,200M)] \cap \Big( \mathbb{Z} \times \mathbb{B}(Mc,10M) \Big)
	\]
	since $\mathbb{B}(z,R^\text{\tiny{loc}}_{T,0,10M}(z;\varepsilon)) \subset \mathbb{B}(Mc,10M)$. Thanks to $\Stab(0,100M)$, in particular $\Stab^M(cM)$ applied with $S := \mathbb{B}(0,200M) \supset \mathbb{B}(Mc,20M)$, we have
	\begin{equation}
		\label{EgalAggRef}
		A^{\dagger}_T[\mathbb{B}(0,200M)] \cap \Big( \mathbb{Z} \times \mathbb{B}(Mc,10M) \Big) = A^{\dagger}_T[\mathbb{B}(Mc,20M)] \cap \Big( \mathbb{Z} \times \mathbb{B}(Mc,10M) \Big) ~.
	\end{equation}
	Since the radius $R^\text{\tiny{loc}}_{T,Mc,M}(z;\varepsilon)$ is defined w.r.t. the aggregate $A^{\dagger}_T[\mathbb{B}(Mc,20M)]$, the identity (\ref{EgalAggRef}) implies that $R^\text{\tiny{loc}}_{T,Mc,M}(z;\varepsilon)$ and $R^\text{\tiny{loc}}_{T,0,10M}(z;\varepsilon)$ are equal.
\end{itemize}
Thus, we have proven that the event $G_{\varepsilon}(Mc,M)$ holds for some $c \in \mathbb{S}_{10}$. We can show in a similar fashion that $G_{\varepsilon}(Mc', M)$ also occurs for some $c' \in \mathbb{S}_{80}$. Inclusion (\ref{eqn: pi squared}) is established.
\end{proof}

\begin{proof}[Proof of Lemma~\ref{lemma: pi bound}]
Let $M \geq 1$. Note that the occurrence of the event $G_{\varepsilon}(0,M)$ forces the random set $\chi_{\varepsilon} \cap \mathbb{B}(0,10M)$ to be non-empty. Therefore,
\begin{eqnarray*}
	\mathbb{P} \left( G_{\varepsilon}(0,M) \right) & \leq & \mathbb{P}\left( \#\left(\mathbb{B}(0,10M) \cap \chi_{\varepsilon}\right) \geq 1 \right) \\
	& \leq & \mathbb{E}\left[\#\left(\mathbb{B}(0,10M) \cap \chi_{\varepsilon}\right)\right] \\
	& = & \sum_{z \in \mathbb{B}(0,10M)} \mathbb{E} \left[ \mathbf{1}_{z\in \chi_{\varepsilon}} \right] \\
	& = & \# \mathbb{B}(0,10M) p_{\varepsilon} ~.
\end{eqnarray*}
Using $p_{\varepsilon} = 1 - e^{-\varepsilon} \leq \varepsilon$ and $\# \mathbb{B}(0,10M) \leq C_d M^{d-1}$, we get the desired result.
\end{proof}

\section{Proof of Theorem~\ref{thm: forest properties}}
\label{sec: proof forest properties}

Before giving the proof of Theorem~\ref{thm: forest properties}, we give the following lemma:
\begin{lemma}
	\label{lemma: subset inclusion}
	For any finite subset $S \subset \Z^d$, 
	\[
	\mathbb{P}\left( S \subset A^{\dagger}_n[\infty] \right) \underset{n \to \infty}{\longrightarrow} 1.
	\]
\end{lemma}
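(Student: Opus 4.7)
The plan is to reduce the finite-subset statement to a pointwise statement, and then reduce the pointwise statement to the classical single-source shape theorem via the Abelian property. Since $S$ is finite, the union bound
\[
\mathbb{P}(S \not\subset A^{\dagger}_n[\infty]) \leq \sum_{x \in S} \mathbb{P}(x \notin A^{\dagger}_n[\infty])
\]
reduces the problem to proving $\mathbb{P}(x \in A^{\dagger}_n[\infty]) \to 1$ for every fixed $x \in \mathbb{Z}^d$.

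Fix $x \in \mathbb{Z}^d$ and set $z := p_{\mathcal{H}}(x) \in \mathcal{H}$, the source closest to $x$ in projection. The strategy is to discard all sources other than $z$: I will show that using only the particles emitted from $z$ during $[0,n]$ is already enough to cover $x$ with probability tending to $1$. Write $N := \#\mathcal{N}_z([0,n])$, which is Poisson-distributed with mean $n$, and let $A_N^{(z)}$ denote the standard single-source \textsc{idla} aggregate rooted at $z$ built from the i.i.d.\ random walks $(S_{z,j})_{1 \leq j \leq N}$. For any $M$ with $z \in \mathcal{H}_M$, the Abelian property of \textsc{idla} aggregates allows the particles contributing to $A^{\dagger}_n[M]$ to be re-ordered so that the $N$ source-$z$ particles settle first; after this initial phase, the aggregate equals $A_N^{(z)}$, and the subsequent particles can only enlarge it. Combined with the natural coupling $A^{\dagger}_n[M] \uparrow A^{\dagger}_n[\infty]$ from Section~\ref{subsec: couplings}, this provides a coupling under which $A_N^{(z)} \subseteq A^{\dagger}_n[\infty]$ almost surely, hence
\[
\mathbb{P}(x \in A^{\dagger}_n[\infty]) \geq \mathbb{P}(x \in A_N^{(z)}).
\]

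Finally, I would invoke the Lawler--Bramson--Griffeath shape theorem \cite{lawler1992internal}: conditionally on $N$, the aggregate $A_N^{(z)}$ contains a Euclidean ball around $z$ of radius proportional to $N^{1/d}$ with probability tending to $1$ as $N \to \infty$, so $\mathbb{P}(x \in A_N^{(z)} \mid N) \to 1$ almost surely. Since $N \sim \mathrm{Poisson}(n) \to \infty$ in probability as $n \to \infty$, dominated convergence gives $\mathbb{P}(x \in A_N^{(z)}) \to 1$, and the pointwise statement follows.

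The only real obstacle is setting up the coupling $A_N^{(z)} \subseteq A^{\dagger}_n[\infty]$ cleanly; this is a direct consequence of the Abelian property applied at each finite level $M$, combined with the monotone limit $M \to \infty$. Once this is in place, the rest of the argument is a routine combination of the classical single-source shape theorem with a union bound.
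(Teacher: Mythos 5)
Your proof is correct and the key ideas are the same as the paper's: use only the particles coming from a single source, invoke the single-source Lawler--Bramson--Griffeath shape theorem, note that the Poisson count of emitted particles tends to infinity, and observe that the resulting single-source aggregate is stochastically dominated by $A^{\dagger}_n[\infty]$. However, you make two choices that differ from the paper's write-up. First, you apply a union bound to reduce to a pointwise statement; the paper applies the shape theorem directly to the whole finite set $S$, which is equivalent but avoids the extra step. Second -- and this is the more substantial difference -- you use the source $z = p_{\mathcal{H}}(x)$ rather than the origin. This choice is not wrong, but it is unnecessarily complicated: for any fixed finite $S$, the shape theorem from the origin alone already eventually covers $S$. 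More importantly, because $A_N^{(z)}$ for $z\neq 0$ is not a term in the monotone sequence $(A^{\dagger}_n[M])_{M\geq 0}$ built by the natural coupling, you need a genuine appeal to the Abelian property to obtain $\mathbb{P}(x\in A^{\dagger}_n[M])\geq \mathbb{P}(x\in A_N^{(z)})$, and you should then pass to the limit $M\to\infty$ via $\mathbb{P}(x\in A^{\dagger}_n[M])\uparrow \mathbb{P}(x\in A^{\dagger}_n[\infty])$ (the Abelian re-orderings for different $M$ need not be mutually consistent, so one should compare probabilities at each fixed $M$ rather than claim a single a.s.\ coupling inclusion). The paper sidesteps all of this by taking the origin as the single source, so that the inclusion $A^{\dagger}_n[0]\subset A^{\dagger}_n[\infty]$ is a direct consequence of the natural coupling with no Abelian-property argument needed; it also replaces your soft ``$N\to\infty$ in probability plus dominated convergence'' step with an explicit Poisson concentration bound $\mathbb{P}(N_0\leq n-n^{1/2+\varepsilon})\leq \exp(-n^{2\varepsilon}/2)$. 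Both approaches buy you the result; the paper's version is somewhat leaner because it stays entirely inside the monotone family $(A^{\dagger}_n[M])$.
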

\begin{proof}[Proof of Lemma~\ref{lemma: subset inclusion}]
	To do so, we will be using the Shape Theorem for standard \textsc{idla}, in the case where exactly $n$ particles are sent from the origin. Let us denote this aggregate by $A(n)$. We know from Theorem 1 of \cite{lawler1992internal} that 
	\begin{equation}
		\label{eqn: shapethm1 lawler bis}
		\mathbb{P}\left( S \subset A(n) \right) \underset{n \to \infty}{\longrightarrow} 1.
	\end{equation}
	Now, recall that particles of $A^{\dagger}_n[\infty]$ are given according to a family of \textsc{ppp}'s in $\R_+$, denoted by $\left(\mathcal{N}_z\right)_{z \in \mathcal{H}}$. Let $N_0 = \mathcal{N}_0([0,n])$ denote the number of particles sent from the origin. Since $N_0$ is a Poisson random variable of parameter $n$, we know from concentration inequality theory that
	\begin{equation*}
		\label{eqn: poisson tail ineq1 bis}
		\mathbb{P}\left( N_0 \leq n/2 \right) = \mathbb{P}\left( N_0 - \mathbb{E}[N_0] \leq -n/2 \right) \leq \exp\left(-\frac{n}{8}\right).
	\end{equation*}
	Hence, define $\mathscr{E}_n:= \left\{ N_0 \leq n/2 \right\}$. Let $S$ denote a finite subset of $\Z^d$. We write:
	\begin{align*}
		\mathbb{P}\left( S \subset A(n/2) \right) &\leq \mathbb{P}\left( \left\{ S \subset A(n/2) \right\}\cap {\mathscr{E}_n}^c  \right) + \mathbb{P}\left({\mathscr{E}_n}\right) \\
		& \leq \mathbb{P}\left(S \subset A^{\dagger}_n[0]\right) + \exp\left(-\frac{n}{8}\right) \\
		& \leq \mathbb{P}\left(S \subset A^{\dagger}_n[\infty]\right) + \exp\left(-\frac{n}{8}\right).
	\end{align*}
	Hence, for any finite subset $S \subset \Z^d$, we have that 
	\[
	\mathbb{P}\left(S \subset A^{\dagger}_n[\infty]\right) \geq \mathbb{P}\left( S \subset A(n/2) \right) - \exp\left(-\frac{n}{8}\right).
	\]
	Using \eqref{eqn: shapethm1 lawler bis}, we have the desired result.
\end{proof}
\color{black}
%TODO [FIN BLOC]

\begin{proof}[Proof of Theorem~\ref{thm: forest properties}]
We begin by showing $1.$
For any $n\geq 0,\ \fdag{n} \subset \fdag{\infty}$, so $V\left(\fdag{n}\right) \subset V\left(\fdag{\infty}\right)$. Now, consider $S$ a finite subset of $\mathbb{Z}^d$. Since the vertex set of $\fdag{n}$ is $A^{\dagger}_n[\infty]$, we have that for any $n \geq 0$,
\[
\mathbb{P}\left( S \subset V\left(\fdag{\infty}\right)\right) \geq \mathbb{P}\left(S \subset V\left(\fdag{n}\right)\right) = \mathbb{P}\left(S \subset A^{\dagger}_n[\infty]\right).
\]
%	Now, we know from Proposition~\ref{thm: shape thm} that for any finite subset $S \subset \mathbb{Z}^d$, we have 
%	\begin{equation}
	%		\label{eqn: ShapeTh coroll}
	%		\mathbb{P}\left(S \subset A^{\dagger}_n[\infty] \right) \underset{n \to \infty}{\longrightarrow} 1.
	%	\end{equation}
%	From \eqref{eqn: ShapeTh coroll}, we deduce that $\mathbb{P}\left(S \subset V\left(\fdag{ \infty }\right)\right) = 1$.
This result is immediate using Lemma \ref{lemma: subset inclusion}.
\newline
We move to the proof of $2.$ Consider a compact $K$ of $\R^d$. Fix $k \in \mathcal{H}$ and $n\geq 0$. According to \cite{schneider} (Theorem 2.1.3), it is sufficient to show that $\fdag{n}$ and $T_k\fdag{n}$ (respectively $\fdag{\infty}$ and $T_k\fdag{\infty}$) have the same probability of intersecting $K$. Assume that this holds for $\fdag{n}$, that is:
\begin{equation}
	\label{eqn: mixing fn}
	\mathbb{P}\left(\fdag{n} \cap K \neq \emptyset \right) = \mathbb{P}\left(T_k\fdag{n} \cap K \neq \emptyset \right) .
\end{equation}
Note that for any subset $C$ such that $\left(C + \mathbb{B}(0,1)\right)\cap \mathbb{Z}^d \subset A^{\dagger}_n[\infty]$, then $\fdag{n}\cap C = \fdag{\infty} \cap C$.
Again using Lemma \ref{lemma: subset inclusion}, we have that 
\[
\mathbb{P}\left( \left(K + \mathbb{B}(0,1)\right)\cap \mathbb{Z}^d \subset A^{\dagger}_n[\infty] \right) \underset{n \to \infty}{\longrightarrow} 1.
\]
Thus, there exists an integer $n_0$ such that $\mathbb{P}\left( \left(K + \mathbb{B}(0,1)\right)\cap \mathbb{Z}^d \subset A^{\dagger}_{n_0}[\infty] \right) \geq 1 - \varepsilon$.
We have: 
\begin{align}
	\label{eqn: eps}
	| \mathbb{P}\left( \fdag{\infty} \cap K \neq \emptyset \right) - \mathbb{P}\left( \fdag{n_0} \cap K \neq \emptyset \right) | &\leq \nonumber \mathbb{P}\left(\fdag{\infty} \cap K \neq \fdag{n_0} \cap K \right) \\
	& \leq \mathbb{P}\left( \left(K + \mathbb{B}(0,1)\right)\cap \mathbb{Z}^d \nsubset A^{\dagger}_{n_0}[\infty] \right) \leq \varepsilon.
\end{align}
Similarly, we can show that
\begin{equation}
	\label{eqn: eps bis}
	|\mathbb{P}\left( T_k\fdag{\infty} \cap K \neq \emptyset \right) - \mathbb{P}\left( T_k\fdag{n_0} \cap K \neq \emptyset \right) | \leq \varepsilon.
\end{equation}
We can now conclude for $\fdag{\infty}$, since 
\begin{multline*}
	| \mathbb{P}\left( \fdag{\infty} \cap K \neq \emptyset \right) - \mathbb{P}\left( T_k\fdag{\infty} \cap K \neq \emptyset \right) | \leq  \\
	\begin{split}
		& |\mathbb{P}\left( \fdag{\infty} \cap K \neq \emptyset \right) - \mathbb{P}\left( \fdag{n_0} \cap K \neq \emptyset \right)| + |\mathbb{P}\left( \fdag{n_0} \cap K \neq \emptyset \right) - \mathbb{P}\left( T_k\fdag{n_0} \cap K \neq \emptyset \right)|\\
		+ & |\mathbb{P}\left( T_k\fdag{n_0} \cap K \neq \emptyset \right) - \mathbb{P}\left( T_k\fdag{\infty} \cap K \neq \emptyset \right)|.
	\end{split}
\end{multline*}
Now, from \eqref{eqn: mixing fn}, we know that the middle term is equal to 0, and from \eqref{eqn: eps} and \eqref{eqn: eps bis}, we get that the first and third term are bounded by $\varepsilon$. Thus, 
\[
| \mathbb{P}\left( \fdag{\infty} \cap K \neq \emptyset \right) - \mathbb{P}\left( T_k\fdag{\infty} \cap K \neq \emptyset \right) | \leq 2\varepsilon.
\]
It now remains to show \eqref{eqn: mixing fn}. Fix $n \geq 0$. Take $M \geq 1$ sufficiently large such that $K \cap \mathbb{Z}^d \subset \mathbb{Z}_M$. From Theorem~\ref{thm: forest stabilization}, there exists a random integer $N_0$ such that for any $N' \geq N_0$, with probability greater than $1- \varepsilon$, we have
\begin{equation}
	\label{eqn: stab of forests}
	\fdag{n}[N'] \cap \mathbb{Z}_M = \fdag{n} \cap \mathbb{Z}_M,\ \quad T_k\fdag{n}[N'] \cap \mathbb{Z}_M = T_k\fdag{n} \cap \mathbb{Z}_M.
\end{equation}
Then, we have:
\begin{multline*}
	| \mathbb{P}\left( \fdag{n}\cap K \neq \emptyset \right) - \mathbb{P}\left( T_k \fdag{n} \cap K \neq \emptyset \right) | \leq \\
	\begin{split}
		& | \mathbb{P}\left( \fdag{n}\cap K \neq \emptyset \right) - \mathbb{P}\left( \fdag{n}[N_0]\cap K \neq \emptyset \right) | + | \mathbb{P}\left(\fdag{n}[N_0] \cap K \neq \emptyset \right) - \mathbb{P}\left(T_k\fdag{n}[N_0] \cap K \neq \emptyset \right) | + \\
		& | \mathbb{P}\left( T_k\fdag{n}[N_0]\cap K \neq \emptyset \right) - \mathbb{P}\left( T_k\fdag{n}\cap K \neq \emptyset \right) |  \\
		\leq & | \mathbb{P}\left(\fdag{n}[N_0] \cap K \neq \emptyset \right) - \mathbb{P}\left(T_k\fdag{n}[N_0] \cap K \neq \emptyset \right) | + 2\varepsilon.
	\end{split}
\end{multline*}

Now, we grow the forests $\fdag{n}[N_0]$ and $T_k\fdag{n}[N_0]$ to obtain two forests $\mathfrak{F}_1$ and $\mathfrak{F}_2$. We obtain $\mathfrak{F}_1$ by sending the particles used to build $\fdag{n}[N_0]$, and add the additional particles from $\left(T_k\mathcal{H}_{N_0}\right) \cap {\mathcal{H}_{N_0}}^c$.
To build $\mathfrak{F}_2$, consider the forest induced by particles used to build $\fdag{n}[N_0]$ and additional particles from $\left(T_{-k}\mathcal{H}_{N_0}\right) \cap {\mathcal{H}_{N_0}}^c$. Let $\mathfrak{F}$ denote this forest. We define $\mathfrak{F}_2$ as the translation of vector $k$ of this forest, that is $\mathfrak{F}_2 := T_k \mathfrak{F}$. Now, from \eqref{eqn: stab of forests}, we know that $\mathfrak{F}_1$ and $\fdag{n}[N_0]$ coincide on the strip $\mathbb{Z}_M$ (and hence on $K$) with probability greater than $1-\varepsilon$. The same is true for $\mathfrak{F}_2$ and $T_k\fdag{n}[N_0]$. Therefore, we have 
\begin{align*}
	| \mathbb{P}\left(\fdag{n}\cap K \neq \emptyset \right) - \mathbb{P}\left(T_k\fdag{n}\cap K \neq \emptyset \right) | \leq | \mathbb{P}\left(\mathfrak{F}_1 \cap K \neq \emptyset \right) - \mathbb{P}\left(\mathfrak{F}_2 \cap K \neq \emptyset \right) | + 4\varepsilon.
\end{align*}
\begin{lemma}
	The set of sources used to build $\mathfrak{F}_1$ and $\mathfrak{F}_2$ are identical.
\end{lemma}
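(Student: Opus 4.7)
The plan is to simply unwind the definitions of $\mathfrak{F}_1$ and $\mathfrak{F}_2$ and check that the two resulting collections of sources coincide as subsets of $\mathcal{H}$. Since the construction of $\mathfrak{F}_2$ involves an auxiliary forest $\mathfrak{F}$ followed by a translation by $k$, the key observation will be that translating the source set of $\mathfrak{F}$ by $k$ exactly compensates for the initial translation by $-k$ that was used when selecting the `extra' sources.

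First I would record the source set of $\mathfrak{F}_1$. By construction, $\mathfrak{F}_1$ uses the sources already emitting for $\fdag{n}[N_0]$, namely $\mathcal{H}_{N_0}$, together with the additional ones in $(T_k\mathcal{H}_{N_0})\cap\mathcal{H}_{N_0}^c$. Since the latter set is disjoint from $\mathcal{H}_{N_0}$, the full source set of $\mathfrak{F}_1$ is
\[
\mathcal{S}_1 \;=\; \mathcal{H}_{N_0}\,\cup\, \bigl((T_k\mathcal{H}_{N_0})\cap\mathcal{H}_{N_0}^c\bigr) \;=\; \mathcal{H}_{N_0}\,\cup\, T_k\mathcal{H}_{N_0}.
\]

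Next I would do the same for $\mathfrak{F}_2$. Before translation, the intermediate forest $\mathfrak{F}$ uses the sources in $\mathcal{H}_{N_0}\cup((T_{-k}\mathcal{H}_{N_0})\cap\mathcal{H}_{N_0}^c)=\mathcal{H}_{N_0}\cup T_{-k}\mathcal{H}_{N_0}$. Applying the translation operator $T_k$ to the whole forest translates its vertex set, and in particular its root set, by $k$, so the source set of $\mathfrak{F}_2=T_k\mathfrak{F}$ is
\[
\mathcal{S}_2 \;=\; T_k\bigl(\mathcal{H}_{N_0}\cup T_{-k}\mathcal{H}_{N_0}\bigr) \;=\; T_k\mathcal{H}_{N_0}\,\cup\,\mathcal{H}_{N_0}.
\]
Comparing $\mathcal{S}_1$ and $\mathcal{S}_2$ yields $\mathcal{S}_1=\mathcal{S}_2$, which is exactly the claim.

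There is no real obstacle here, this lemma is essentially a bookkeeping check on the symmetry of the construction. The only point to be slightly careful about is that $T_k$ is a bijection on $\mathcal{H}$, so $T_k$ commutes with unions and $T_k\circ T_{-k}=\mathrm{id}$; this is what makes the `extra' sources on the left of $\mathcal{H}_{N_0}$ used to build $\mathfrak{F}$ become exactly the `extra' sources on the right of $\mathcal{H}_{N_0}$ in $\mathfrak{F}_2$. Note that this statement only concerns the set of emitting sources; the genuine content, which is that the two forests $\mathfrak{F}_1$ and $\mathfrak{F}_2$ have close distributions on $K$, will be handled afterwards via the Aggregate stabilization result combined with the invariance in distribution of the driving Poisson clocks and random walks under $T_k$.
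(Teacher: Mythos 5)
Your proof is correct, and it fills in exactly the bookkeeping that the paper leaves implicit (the paper states this lemma without giving a proof). The two-line identity $\mathcal{H}_{N_0}\cup\bigl((T_k\mathcal{H}_{N_0})\cap\mathcal{H}_{N_0}^c\bigr)=\mathcal{H}_{N_0}\cup T_k\mathcal{H}_{N_0}$ together with the fact that $T_k$ commutes with unions and inverts $T_{-k}$ is precisely what the authors are relying on, and your closing remark correctly identifies that the real probabilistic content lives elsewhere in the argument.
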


Now, the forest $\mathfrak{F}_1$ and $\mathfrak{F}_2$ are built according to the \textsc{idla} protocol, using the same set of sources, with i.i.d Poisson clocks over the time interval $[0,n]$. They therefore have same distribution, which implies that 
\[
| \mathbb{P}\left(\fdag{n}\cap K \neq \emptyset \right) - \mathbb{P}\left(T_k\fdag{n}\cap K \neq \emptyset \right) | \leq 4\varepsilon,
\]
which concludes the proof.
\newline
We now prove 3. 
We show that for any compacts $C_1,\ C_2$ of $\R^2$
\begin{equation}
	\label{eqn: mixing compact}
	\lim_{k\in\mathcal{H},\ \|k\|\to \infty}\mathbb{P}\left(\fdag{\infty}\cap(C_1\cup T_k C_2) = \emptyset\right) = \mathbb{P}\left( \fdag{\infty}\cap C_1 = \emptyset \right)\mathbb{P}\left(\fdag{\infty}\cap C_2 = \emptyset\right).
\end{equation}
Fix $\varepsilon >0$ and let $C_1,\ C_2$ be two compact sets of $\R^2$.
Let $r > 0$ be such that $C_1 \cup C_2$ is included in $\mathbb{B}(0,r-1)$. From Lemma \ref{lemma: subset inclusion}, we can pick $n$ large enough such that 
\[
\mathbb{P}\left( \mathbb{B}(0,r)\cap \mathbb{Z}^2 \subset A^{\dagger}_n[\infty] \right) \geq 1 - \varepsilon.
\]
On the event $\left\{ \mathbb{B}(0,r)\cap \mathbb{Z}^2 \subset A^{\dagger}_n[\infty] \right\}$, $\fdag{\infty}\cap C_i$ and $\fdag{n}\cap C_i$ are equal for any $i\in\{1,2\}$. Since the distribution of $A^{\dagger}_n[\infty]$ is invariant with respect to $T_k$, we have that for any $k \in \mathcal{H}$ (independent of $\varepsilon$),
\[
\mathbb{P}\left( T_k(C_1 \cup C_2) \subset A^{\dagger}_n[\infty] \right) \geq 1 -\varepsilon.
\]
This implies:
\[
\mathbb{P}\left( \fdag{\infty} \cap (C_1\cup T_k C_2) = \fdag{n} \cap (C_1\cup T_k C_2) \right) \geq 1 - 2\varepsilon.
\]
Therefore, 
\begin{multline*}
	| \mathbb{P}\left(\fdag{\infty} \cap (C_1\cup T_k C_2) =\emptyset \right) - \mathbb{P}\left( \fdag{\infty} \cap C_1 =\emptyset\right)\mathbb{P}\left( \fdag{\infty} \cap C_2 =\emptyset\right)| \\
	\leq | \mathbb{P}\left(\fdag{n} \cap (C_1\cup T_k C_2) =\emptyset\right) - \mathbb{P}\left( \fdag{n} \cap C_1 =\emptyset \right)\mathbb{P}\left( \fdag{n} \cap C_2 =\emptyset \right)|+4\varepsilon.
\end{multline*}
It remains to show that $\fdag{n}$ is mixing with respect to $T_k$.
From Theorem~\ref{thm: forest stabilization}, there exists $N_0$ such that 
\begin{equation}
	\label{eqn: forest stab eps}
	\mathbb{P}\left( \fdag{n}\cap C_1 \neq \fdag{n}[N_0] \cap C_1 \right) \leq \varepsilon \, \text{ and } \, \mathbb{P}\left( \fdag{n}\cap C_2 \neq \fdag{n}[N_0] \cap C_2 \right) \leq \varepsilon.
\end{equation}
%Now, recall that $\fdag{n}$ and $\fdag{N_0}$ are both built with respect to the family of $\mathcal{N}_z$'s (for $z\in \mathcal{H}$). The law of $\fdag{\infty}$ and $\fdag{n}$ does not change if we shift all $\mathcal{N}_z$'s by the vector $-k$.
We have, by shifting all the random clocks by a vector $-k \in \mathcal{H}$:
\begin{align*}
	\mathbb{P}\left( \fdag{n}\cap C_2 \neq \fdag{n}[N_0] \cap C_2 \right) & = \mathbb{P}\left( \fdag{n}(\omega)\cap C_2 \neq \fdag{n}[N_0](\omega) \cap C_2 \right) \\
	& = \mathbb{P}\left( \fdag{n}(\omega - k)\cap C_2 \neq \fdag{n}[N_0](\omega - k) \cap C_2 \right) \\
	& = \mathbb{P}\left(\fdag{n}(\omega)\cap T_kC_2 \neq \fdag{n}[\mathbb{B}(k, N_0)](\omega) \cap T_kC_2\right).
\end{align*}
Hence, from \eqref{eqn: forest stab eps}, for any $k\in \mathcal{H}$ (independent of $\varepsilon$), there exists $N_0$ such that
\begin{equation}
	\label{eqn: forest stab eps bis}
	\mathbb{P}\left(\fdag{n}(\omega)\cap T_kC_2 \neq \fdag{n}[\mathbb{B}(k, N_0)](\omega) \cap T_kC_2\right) \leq \varepsilon.
\end{equation}
Now, we write:
\begin{multline*}
	|\mathbb{P}\left(\fdag{n} \cap (C_1\cup T_k C_2) = \emptyset\right)-\mathbb{P}\left( \fdag{n} \cap C_1 = \emptyset \right)\mathbb{P}\left( \fdag{n} \cap C_2 = \emptyset\right)| \\
	\begin{split}
		& = |\mathbb{P}\left( \{\fdag{n} \cap C_1 = \emptyset\} \cap \{\fdag{n} \cap T_k C_2 = \emptyset\}\right)-\mathbb{P}\left( \fdag{n} \cap C_1 = \emptyset \right)\mathbb{P}\left( \fdag{n} \cap C_2 = \emptyset\right)| \\
		& \leq |\mathbb{P}\left( \{\fdag{n} \cap C_1 = \emptyset\} \cap \{\fdag{n} \cap T_k C_2 = \emptyset\}\right)- \mathbb{P}\left(\{\fdag{n}[N_0] \cap C_1 = \emptyset\} \cap \{\fdag{n}[\mathbb{B}(k,N_0)] \cap T_k C_2 = \emptyset\}\right) | \\
		& + | \mathbb{P}\left(\{\fdag{n}[N_0] \cap C_1 = \emptyset\} \cap \{\fdag{n}[\mathbb{B}(k,N_0)] \cap T_k C_2 = \emptyset\}\right) - \mathbb{P}\left( \fdag{n} \cap C_1 = \emptyset \right)\mathbb{P}\left( \fdag{n} \cap C_2 = \emptyset\right)|.\\
	\end{split}
\end{multline*}
\begin{lemma}
	\label{lemma: abso inequality}
	Let $A, A', B,$ and $B'$ denote 4 random sets. Suppose $\mathbb{P}\left(A \neq A'\right) \leq \varepsilon$ and $\mathbb{P}\left(B \neq B'\right) \leq \varepsilon$. Then 
	\[
	|\mathbb{P}\left( \left\{A = \emptyset\right\}\cap \left\{B = \emptyset\right\} \right) - \mathbb{P}\left(\left\{A' = \emptyset\right\} \cap\left\{B' = \emptyset\right\} \right) | \leq 2\varepsilon.
	\]
\end{lemma}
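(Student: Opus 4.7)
The plan is to reduce the four-event comparison to two independent symmetric-difference bounds via the elementary inequality
\[
|\mathbb{P}(X) - \mathbb{P}(Y)| \leq \mathbb{P}(X \triangle Y)
\]
valid for any two measurable sets $X, Y$. Applied with $X := \{A = \emptyset\}\cap\{B = \emptyset\}$ and $Y := \{A' = \emptyset\}\cap\{B' = \emptyset\}$, the problem reduces to controlling $\mathbb{P}(X \triangle Y)$.

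First I would decompose the symmetric difference of two intersections using the general set identity
\[
(E_1 \cap F_1) \triangle (E_2 \cap F_2) \subset (E_1 \triangle E_2) \cup (F_1 \triangle F_2),
\]
applied with $E_i = \{A = \emptyset\}$ or $\{A' = \emptyset\}$ and $F_i = \{B = \emptyset\}$ or $\{B' = \emptyset\}$. This yields
\[
X \triangle Y \subset \bigl(\{A = \emptyset\} \triangle \{A' = \emptyset\}\bigr) \cup \bigl(\{B = \emptyset\} \triangle \{B' = \emptyset\}\bigr).
\]

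Next, the key observation is that $\{A = \emptyset\} \triangle \{A' = \emptyset\} \subset \{A \neq A'\}$: indeed, on the event $\{A = A'\}$ the two sets are simultaneously empty or simultaneously non-empty, hence belong to the same indicator class. The same holds for $B$ and $B'$. Combining this with the union bound and the hypotheses gives
\[
\mathbb{P}(X \triangle Y) \leq \mathbb{P}(A \neq A') + \mathbb{P}(B \neq B') \leq 2\varepsilon,
\]
and the conclusion follows immediately from the first displayed inequality. There is no real obstacle here; the lemma is purely set-theoretic and the only subtlety is the (standard) observation that equality of the random sets transfers to equality of their emptiness indicators.
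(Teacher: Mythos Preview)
Your argument is correct: the symmetric-difference bound $|\mathbb{P}(X)-\mathbb{P}(Y)|\le\mathbb{P}(X\triangle Y)$ together with $(E_1\cap F_1)\triangle(E_2\cap F_2)\subset(E_1\triangle E_2)\cup(F_1\triangle F_2)$ and the inclusion $\{A=\emptyset\}\triangle\{A'=\emptyset\}\subset\{A\neq A'\}$ gives exactly the claimed $2\varepsilon$. The paper states this lemma without proof, treating it as elementary, so there is nothing to compare and your write-up fills the gap cleanly.
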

To alleviate notation, we define the following sets:
\[
\left\{
\begin{array}{ll}
	A = \fdag{n}\cap C_1,\ \\ 
	B = \fdag{n}\cap T_k C_2,\\ 
\end{array}
\right.
\quad \text{ and } \quad 
\left\{
\begin{array}{ll}
	A' = \fdag{n}[N_0]\cap C_1, \\ 
	B' = \fdag{n}[\mathbb{B}(k,N_0)]\cap T_k C_2. \\ 
\end{array}
\right.
\]
We can rewrite the previous inequality as:
\begin{multline*}
	|\mathbb{P}\left(\fdag{n} \cap (C_1\cup T_k C_2) = \emptyset\right)-\mathbb{P}\left( \fdag{n} \cap C_1 = \emptyset \right)\mathbb{P}\left( \fdag{n} \cap C_2 = \emptyset\right)| \\
	\begin{split}
		&\leq |\mathbb{P}\left(\left\{A = \emptyset\right\}\cap \left\{B = \emptyset\right\}\right)- \mathbb{P}\left(\left\{A' = \emptyset\right\} \cap\left\{B' = \emptyset\right\} \right) | \\
		&+ | \mathbb{P}\left(\left\{A' = \emptyset\right\} \cap\left\{B' = \emptyset\right\} \right) - \mathbb{P}\left(A = \emptyset\right)\mathbb{P}\left(B = \emptyset\right)|.
	\end{split}
\end{multline*}
Note that for the last term, we replaced $\mathbb{P}\left(\fdag{n} \cap C_2 = \emptyset\right)$ by $\mathbb{P}\left(B = \emptyset\right)$. We can do so since the distribution of $\fdag{n}$ is invariant with respect to $T_k$, so $\fdag{n}$ has same probability of intersecting $C_2$ or $T_kC_2$.
\newline
Notice that for any $k$ such that $\|k\| > 2N_0$, the events $A'$ and $B'$ are independent, since the two forests considered are built using disjoint sets of sources.
Additionally, from \eqref{eqn: eps} and \eqref{eqn: eps bis}, we have that $\mathbb{P}\left(A \neq A' \right) \leq \varepsilon$ and $\mathbb{P}\left(B \neq B' \right) \leq \varepsilon$. Therefore, using the result of Lemma~\ref{lemma: abso inequality}, we get, for any $\|k\| > 2N_0$:
\begin{multline*}
	|\mathbb{P}\left(\fdag{n} \cap (C_1\cup T_k C_2) = \emptyset\right)-\mathbb{P}\left( \fdag{n} \cap C_1 = \emptyset \right)\mathbb{P}\left( \fdag{n} \cap C_2 = \emptyset\right)| \\
	\leq | \mathbb{P}\left(A' = \emptyset\right)\mathbb{P}\left(B' = \emptyset\right) - \mathbb{P}\left(A = \emptyset\right)\mathbb{P}\left(B = \emptyset\right)| + 2\varepsilon.
\end{multline*}
Now, since $\mathbb{P}\left(A \neq A' \right) \leq \varepsilon$ and $\mathbb{P}\left(B \neq B' \right) \leq \varepsilon$, one can show (in the same spirit of Lemma~\ref{lemma: abso inequality}) that 
\[
| \mathbb{P}\left(A' = \emptyset\right)\mathbb{P}\left(B' = \emptyset\right) - \mathbb{P}\left(A = \emptyset\right)\mathbb{P}\left(B = \emptyset\right)| \leq 2\varepsilon.
\]
Therefore, 
\[
| \mathbb{P}\left(\fdag{\infty} \cap (C_1\cup T_k C_2) =\emptyset \right) - \mathbb{P}\left( \fdag{\infty} \cap C_1 =\emptyset\right)\mathbb{P}\left( \fdag{\infty} \cap C_2 =\emptyset\right)| \leq 8 \varepsilon,
\]
which concludes the proof.
\end{proof}

%TODO enlever "\appendix" pour le manuscrit de th\`ese 
\appendix
\section{Appendix: Proof of Proposition~\ref{thm: global upper bound}}
\label{sec: appendix}
Before proving Proposition~\ref{thm: global upper bound}, we must first introduce a new family of random aggregates. 
Let $n\geq 1$. Just like for $A^{\dagger}_n[\infty]$, we begin by building a family of \textit{finite} random aggregates $(A^{*}_n[M])_{M \geq 0}$.
Like $A^{\dagger}_n[\infty]$, the number of particles sent from each source $z$ is random, given this time by a Poisson random variable $N_z$ of parameter $n$, but unlike $A^{\dagger}_n[\infty]$, these are sent in a predetermined order. 
Let $(N_z)_{z\in \mathcal{H}}$ denote a family of i.i.d Poisson random variables of parameter $n$. 
When $M=0$, $A^*_n[0]$ is the aggregate obtained after launching $N_0$ particles from the origin. Given a realization of $A_n[M-1]$, we throw $N_z$ particles from each source $z$ of level $M$ according to the lexicographical order. So $A^*_n[M]$ is defined as the aggregate produced by $A^*_n[M-1]$ and the new sites added by particles launched at level $M$.
By construction, $(A^{*}_n[M])_{M \geq 0}$ is increasing with respect to inclusion, so we can define $A^*_n[\infty]$ as 
\[
A^*_n[\infty] := \uparrow \bigcup_{M\geq 0} A^*_n[M] \quad \mathrm{a.s.}
\]
A consequence of the Abelian Property (see \cite{diaconis1991growth}, p. 97) is that for all $M\geq 0$, 
\begin{equation} 
\label{eqn: agg equal in law}
A^{\dagger}_n[M] \overset{\mathrm{law}}{=} A^{*}_n[M].
\end{equation}
Since both families of aggregates $(A_n^*[M])_{M \geq 0}$ and $(A^{\dagger}_n[M])_{M \geq 0}$ are increasing, we deduce from \eqref{eqn: agg equal in law} that $A^{\dagger}_n[\infty] \overset{\mathrm{law}}{=} A^{*}_n[\infty]$. 
%This result will be particularly useful, as we will be able to deduce results for $A^{\dagger}_n[\infty]$ from those of $A^{*}_n[\infty]$ and vice versa. In some cases, it will be easier to work with particles sent in a deterministic order, hence studying $A^{*}_n[\infty]$, while other times it will be advantageous to work with a random order.

%\subsection{Proof of Proposition~\ref{thm: global upper bound}}
The global upper bound provided by Proposition~\ref{thm: global upper bound} is a refined version of Theorem 4.1 of \cite{chenavier2024idla}. Although our result is finer and deals with a random number of emitted particles, the strategy of the proof is essentially the same.

We will be proving Proposition~\ref{thm: global upper bound} by induction over $n$. Since $A^{\dagger}_n[\infty] \overset{\mathrm{law}}{=} A^*_n[\infty]$, we will show the result for $A^*_n[\infty]$ instead, since this aggregate is built by sending particles in the \textit{usual order}. We define the event $\Over^*_{\alpha}$ in the same way as $\Over^{\dagger}_{\alpha}$ in \eqref{eqn: over*} but with respect to the aggregate $A_n^*[\infty]$.
We show that if for some fixed $n$, the aggregate $A^*_n[\infty]$ is contained within $\mathscr{C}_{\varepsilon}^{\alpha}$ for some $\varepsilon > 0$, and if we launch $N'_z$ additional particles from each source $z$ of $\mathcal{H}$, where $(N'_z)_{z\in\mathcal{H}}$ is an independent family of Poisson variables of parameter $1$, then the resulting aggregate is contained within a slightly larger cone $\mathscr{C}_{\varepsilon'}^{\alpha}$ ($\varepsilon' > \varepsilon$) with high probability. We keep the same notations as in the proof of Theorem 4.1 of \cite{chenavier2024idla} and define the sequences $(M_n)_{n\geq 0}$ and $(\varepsilon_n)_{n\geq 0}$ in the same way. 
Just like for Theorem 4.1 of \cite{chenavier2024idla}, it is sufficient to prove the following proposition:
\begin{proposition}
\label{prop: fine BMN short}
For all $L \geq 1$, for all $\alpha \in (1-1/d, 1)$, for all $\varepsilon>0$, for all $n\geq 1$, there exists a constant $C = C({\varepsilon,n,\alpha,d,L})>0$ such that for all $M\geq 1$, 
\[
\mathbb{P}\left(\Over^*_{\alpha}(M_n, \varepsilon_n)\right)\leq \dfrac{C}{M^L}.
\]
\end{proposition}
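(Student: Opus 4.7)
The plan is to induct on $n$, reproducing the scheme of \cite{chenavier2024idla}[Theorem~4.1] and only modifying the donut crossing estimate to accommodate the thinner cone $\mathscr{C}_\varepsilon^\alpha$ with $\alpha\in(1-1/d,1)$. The base case $n=0$ is trivial since $A^*_0[\infty]=\emptyset$. For the inductive step, assume the bound at rank $n$ and pass to $A^*_{n+1}[\infty]$ by launching one additional independent Poisson$(1)$ batch of particles from every source in $\mathcal{H}$ (this extra increment is exactly the difference between the Poisson$(n)$ and Poisson$(n+1)$ families of emissions). Working on the good event $\Over^*_\alpha(M_n,\varepsilon_n)^c$, any violation of the thicker cone $\mathscr{C}_{\varepsilon_{n+1}}^\alpha$ outside $\mathbb{Z}_{M_{n+1}}$ by the new aggregate forces some new particle to settle at a site outside this cone; since the previous aggregate lies inside $\mathscr{C}_{\varepsilon_n}^\alpha$, the random walk of such a particle must travel from its source down to the offending site while remaining inside $\mathscr{C}_{\varepsilon_n}^\alpha$.

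Decomposing the new particles by dyadic shells $[2^jM_{n+1},2^{j+1}M_{n+1}]$ of source levels and stacking donuts symmetric with respect to $\mathcal{H}$ that contain the cone, one recycles the donut argument of Lemma~\ref{prop: particle stab} and Proposition~3.1 of \cite{chenavier2024idla} essentially verbatim. The only place where the exponent $\alpha$ genuinely enters is the donut count: between levels $M$ and $2M$ one fits at least $k^o\geq C_\alpha M^{1-\alpha}$ disjoint donuts, by the same calculation as in \eqref{eqn: value of k chap2} taken at $j=1$. A single random walk therefore has probability at most $(1-c)^{k^o}\leq\exp(-c'M^{1-\alpha})$ of crossing a dyadic shell while remaining in the cone, with $c=(2d)^{-2}$. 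Combining this with the Poisson concentration \eqref{eqn: concentration inequality poisson} applied shell by shell as in Lemma~\ref{lemma: particle control ann} to bound the total number of new particles emitted in each annulus, a union bound over particles and over shells bounds the bad probability by
\[
\sum_{j\geq 0}(2^jM_{n+1})^{d-1}\exp\bigl(-c'(2^jM_{n+1})^{1-\alpha}\bigr),
\]
which is summable and decays faster than any $M^{-L}$ as soon as $\alpha<1$.

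The main obstacle is bookkeeping: one must verify that the constants produced by the finitely many inductive steps compound into a final $C=C(\varepsilon,n,\alpha,d)$ rather than blowing up. This is automatic because at each step the stretched exponential $\exp(-c'M^{1-\alpha})$ dominates every polynomial prefactor uniformly in $n$, so each inductive step only multiplies the constant by a finite factor, and the sequences $(M_n)$ and $(\varepsilon_n)$ can be prescribed exactly as in \cite{chenavier2024idla}. The range $\alpha\in(1-1/d,1)$ is inherited from the corresponding choice of cone in \cite{chenavier2024idla}, where it ensures that the cone is wide enough at all relevant scales for the recursion on $\varepsilon_n$ and $M_n$ to close; here the refinement $\alpha<1$ has the additional benefit, exploited in Section~\ref{subsec: proof of stab agg dagg}, of producing a linear-in-$M$ stabilization scale rather than the polynomial one of the original Theorem~1.2 of \cite{chenavier2024idla}.
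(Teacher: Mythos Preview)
Your inductive scheme and base case are fine, but the core of the inductive step has a genuine gap. You claim that on $\Over^*_\alpha(M_n,\varepsilon_n)^c$, the first new particle to settle outside $\mathscr{C}^\alpha_{\varepsilon_{n+1}}$ at some level $l\geq M_{n+1}$ ``must travel from its source down to the offending site while remaining inside $\mathscr{C}^\alpha_{\varepsilon_n}$'', and then you invoke a donut-crossing bound. The problem is that nothing forces the source of that particle to be far from level $l$. If the source $z'$ sits at level $l$ (or nearby), the particle only needs to drift a distance of order $\varepsilon_{n+1}l^\alpha$ in the $e_1$-direction to exit the cone; it crosses no donuts at all in your stacking. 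Your dyadic-shell decomposition over source levels and the union bound $\sum_j (2^jM_{n+1})^{d-1}\exp(-c'(2^jM_{n+1})^{1-\alpha})$ therefore control nothing in this case, and the argument does not close.

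The paper resolves this through a tentacle/ball dichotomy taken from \cite{jerison2012logarithmic}: either fewer than $br_{n+1}^d$ new particles settle in the ball $\mathbb{B}(z_{n+1},r_{n+1})$ with $r_{n+1}=\varepsilon l^\alpha/2^{n+1}$ (a ``thin tentacle'', killed by Lemma~\ref{lemma: fine tentacle short} with stretched-exponential cost in $l^{2\alpha}$), or at least $br_{n+1}^d\sim l^{\alpha d}$ particles settle there. In the second case a counting argument, using that on $\mathscr{E}_M(\gamma)$ each source at distance $h$ emits at most $1+h^\gamma$ particles, forces some contributing source to lie at distance $\geq K l^\eta$ with $\eta=(\alpha d-\gamma)/(d-1)>1$; only then can a donut argument be applied to that distant particle. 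This counting step is exactly where the lower bound $\alpha>1-1/d$ is used: it guarantees one can pick $\gamma\in(0,(\alpha-1)d+1)$ so that $\eta>1$. Your write-up attributes the restriction $\alpha>1-1/d$ to something ``inherited from \cite{chenavier2024idla}'', but Theorem~4.1 there is the case $\alpha=1$; the constraint is new here and enters precisely through the volume-versus-emission comparison you omitted.
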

\begin{proof}[Proof of Proposition~\ref{prop: fine BMN short}:]
We show our result by induction over $n$. 
Take $L>1$. Our induction statement is the following:
\newline
$
\forall n\geq 0,\ \mathcal{P}(n): \forall \alpha \in (1-1/d, 1),\ \forall \varepsilon\in (0,1),\ \exists C = C({\varepsilon,n,\alpha,d,L})>0,\ \forall M\geq 1,\ 
$
\[
\mathbb{P}\left(\Over^*_{\alpha}(M_n,\varepsilon_n)\right)\leq \dfrac{C}{M^L}.
\]
When $n = 0$, this is clear since $A^*_n[\infty] \overset{\mathrm{a.s}}{=} \emptyset$, hence $A^*_n[\infty] \cap \mathbb{Z}_M^c \overset{\mathrm{a.s}}{\subset}\mathscr{C}_{\varepsilon}^{\alpha}$.
\newline
Let $n\geq 1$ and suppose $\mathcal{P}(n)$ holds. Fix $\alpha \in (1-1/d, 1)$. We write:
\[
\mathbb{P}\left(\Over^*_{\alpha}(M_{n+1},\varepsilon_{n+1})\right)\leq \mathbb{P}\left(\Over^*_{\alpha}(M_{n+1},\varepsilon_{n+1})\cap \Over^*_{\alpha}(M_{n},\varepsilon_{n})^c\right)+\mathbb{P}\left(\Over^*_{\alpha}(M_{n},\varepsilon_{n} ) \right).
\]
The last term is handled by our induction hypothesis.
We switch our focus to the first term. On the event $\Over^*_{\alpha}(M_{n+1},\varepsilon_{n+1})\cap \Over^*_{\alpha}(M_{n},\varepsilon_{n})^c$, we have $A^*_n[\infty] \cap \mathbb{Z}_{M_n}^c \subset \mathscr{C}_{\varepsilon_{n}}^{\alpha} $, but when launching $N'_z$ new particles from each source $z\in\mathcal{H}$, the new aggregate obtained spills over $\mathscr{C}_{\varepsilon_{n+1}}^{\alpha}$ on $\mathbb{Z}_{M_{n+1}}^c$.
This implies the existence of three random sites $(Z,Z^*,Z_{n+1})\in \mathbb{Z}^d$, and aggregates $A_{Z^*}$ and $A_{Z^*}^{-}$, defined just like in \cite{chenavier2024idla}, but defined with respect to $\mathscr{C}_{\varepsilon}^{\alpha}$ rather than $\mathscr{C}_{\varepsilon}$. Note now that we have : $Z_{n+1}=Z\pm \left(\varepsilon_{n+1}\|Z\|^{\alpha}\right)\cdot e_1$, where $e_1 = (1, 0,\dots, 0)$.

We must control that no unreasonable amount of particles is emitted from $\mathcal{H}$. To do so, we introduce the following event:
\[
\mathscr{E}_M(\gamma) := \bigcap_{l\geq 0}\left\{ \forall z\in\mathcal{H},\ \|z\|= l,\ N'_z \leq 1 + \max\{l,M\}^{\gamma}\right\}, 
\]
We can show using \eqref{eqn: concentration inequality poisson} that $\mathbb{P}\left(\mathscr{E}_M^c(\gamma)\right)$ decreases faster than any power of $M^{-1}$.
Fix $\gamma \in (0, 1)$ such that $\gamma < (\alpha -1)d +1 $ (such a value exists since $\alpha \in (1-1/d, 1)$). We explain this choice later. We write
\begin{multline*}
	\mathbb{P}\left(\Over^*_{\alpha}(M_{n+1},\varepsilon_{n+1})\cap \Over^*_{\alpha}(M_{n},\varepsilon_{n})^c\right) \\ 
	\begin{split}
		&\leq\mathbb{P}\left(\Over^*_{\alpha}(M_{n+1},\varepsilon_{n+1})\cap \Over^*_{\alpha}(M_{n},\varepsilon_{n})^c \cap \mathscr{E}_M(\gamma)\right) + \mathbb{P}\left(\mathscr{E}_M^c(\gamma)\right) \\
		&\leq \sum_{l\geq M_{n+1}}\sum_{\|z\|=l}\mathbb{P}\left(Z=z,\ \Over^*_{\alpha}(M_{n+1},\varepsilon_{n+1})\cap\Over^*_{\alpha}(M_{n},\varepsilon_{n})^c \cap \mathscr{E}_M(\gamma) \right) + \smallO(M^{-L}).
	\end{split}
\end{multline*} 
Fix $l \geq M_{n+1}$ and $z\in \mathcal{H}$ such that $\|z\| = l$, and let $z_{n+1} = z \pm \left( \varepsilon_{n+1} \|z\|^{\alpha}\right)\cdot e_1$. We consider the case where a ball of particles has settled around $z_{n+1}$, and the case where a thin tentacle reaches out to $z_{n+1}$.
We use an adaptation of Lemma 2 of \cite{jerison2012logarithmic} to deal with the event of tentacles.
\begin{lemma}
	\label{lemma: fine tentacle short} 
	There exist positive universal constants $b,\ K_0,\ c$ such that for all real numbers $r>0$ and all $z\in\mathcal{H}$ with $0\notin \mathbb{B}(z_{n+1},r)$,
	\begin{multline*}
		\mathbb{P}\left(Z=z,\ \Over^*_{\alpha}(M_{n+1},\varepsilon_{n+1})\cap\Over^*_{\alpha}(M_{n},\varepsilon_{n})^c,\ \#\left(A_{Z^*}\cap \mathbb{B}(z_{n+1},r)\right)\leq br^d\right)
		\leq K_0e^{-cr^2}.
	\end{multline*}
\end{lemma}
We apply this Lemma with $r = r_{n+1} = \frac{\varepsilon l^{\alpha} }{2^{n+1}}$, in the same spirit as in \cite{chenavier2024idla}. This gives:
\begin{align}
	&\mathbb{P}\left(Z=z,\ \Over^*_{\alpha}(M_{n+1},\varepsilon_{n+1}) \cap \Over^*_{\alpha}(M_{n},\varepsilon_{n})^c\cap \mathscr{E}_M(\gamma) \right)  \nonumber \\
	\leq &\mathbb{P}\left(Z=z,\ \Over^*_{\alpha}(M_{n+1},\varepsilon_{n+1}) \cap \Over^*_{\alpha}(M_{n},\varepsilon_{n})^c, \right. \nonumber \\
	&\hspace{10em} \left. \mathscr{E}_M(\gamma), \ \#\left(A_{Z^*} \cap \mathbb{B}(z_{n+1},r_{n+1})\right) > br_{n+1}^d \right) + K_0e^{-c_1l^{2\alpha}}, \label{eqn: fine ball short}
\end{align}
where $c_1=c_1(n, \varepsilon)=\frac{c\varepsilon^2}{4^{n+1}}$. The term \eqref{eqn: fine ball short} requires a little more work than in the deterministic global upper bound.
Note that $r_{n+1}^d$ is of order $l^{\alpha d}$. We are working on the event where (roughly) more than $l^{\alpha d}$ particles have settled around $z_{n+1}$, knowing that each source $\tilde{z}\in\mathcal{H}$ has emitted at most $1 + \|\tilde{z}\|^{\gamma}$ particles. We show that this implies that $\|Z-Z^*\| \geq Kl^{\eta}$, where $\eta >1$ and $K$ denotes some positive constant. 
%The fact that $\eta > 1$ is crucial for the rest of the proof, and results from our choice of $\gamma$ such that $0 < \gamma < (\alpha -1)d +1 $.
Let us now explain why $\eta > 1$. Suppose, contrary to our claim, that $\eta  \leq 1$. The number of sources inside $\mathbb{B}\left(z, Kl^{\eta} \right)\cap \mathcal{H}$ is of the order $l^{\eta(d-1)}$ since $\mathcal{H}$ is a hyperplane of dimension $d-1$. Working on the event $\mathscr{E}_M$, the largest amount of particles emitted by a single source within this $(d-1)$-dimensional ball is $1+\left(l+Kl^{\eta}\right)^{\gamma}$, which is of the order $l^{\gamma}$ when $\eta \leq 1$. In the worst case scenario, if all of the sources within $\mathbb{B}\left(z, Kl^{\eta} \right)\cap \mathcal{H}$ emit of the order of $l^{\gamma}$ particles, the total number of particles emitted will be of the order of 
$
l^{\eta(d-1)}\times l^{\gamma} = l^{\eta(d-1) + \gamma}.
$
Now, for this to be of the order (or greater) than $l^{\alpha d}$, it is thus necessary that 
\[
\eta (d-1) + \gamma \geq \alpha d \Longleftrightarrow \eta \geq  \frac{\alpha d - \gamma}{d-1}.
\]
However, due to our choice of $\gamma$, this necessarily implies that $\eta >1$, which contradicts our assumption that $\eta \leq 1$.
Therefore, in order for \textit{more} than $l^{\alpha d}$ particles to settle inside $\mathbb{B}\left(z_{n+1}, r_{n+1}\right)$, it is necessary that one of these particles has been emitted from a source outside of $\mathbb{B}\left(z, Kl^{\eta} \right)\cap \mathcal{H}$, with $\eta >1$. It is thus necessary that $ \|Z-Z^*\| \geq Kl^{\eta}$. 
Since $Z^*$ is defined as the source from which the \textit{first} overflowing particle is emitted, this implies that the aggregate before sending from $Z^*$ is strictly contained within $\mathscr{C}_{n+1}^{\alpha}$, allowing us to use a donut argument (similar to the one used in the proof of Lemma~\ref{lemma: radius control}) to control the trajectory of the overflowing particle. 
This implies that one of the particles sent from $z'$, with $\|z'-z\| \geq Kl^{\eta}$, has crossed multiple donuts. We detail below how we compute a lower bound on the total number of donuts a particle needs to cross from $z'$ to $z$. Fix $h\geq Kl^{\eta}$ and $z'$ such that $\|z'-z\| = h$. We build donuts from level $\|z'\|$ to $\|z\| = l$. The first donut, which is the \textit{largest} donut, has dimensions at most $2\varepsilon_{n+1}(l+h)^{\alpha} \leq 2 \varepsilon_{n+1} (2h)^{\alpha} \leq 4\varepsilon_{n+1}h^{\alpha}$. All other donuts will have smaller dimensions, which implies that the number of donuts $k = k(h,l, \varepsilon_{n+1}, \alpha)$ between $z'$ and $z$ is such that
\[
k \geq \frac{h}{4\varepsilon_{n+1} h^{\alpha}} = \frac{h^{1-\alpha}}{4\varepsilon_{n+1}}.
\]
Now, the number of particles sent from $z'$ at distance $h$ of $z$ is (up to a multiplicative factor) at most $h^{d-2 + \gamma}$. Therefore, the donut argument gives:
\begin{align*}
	&\sum_{h\geq Kl^{\eta}} \sum_{{\|z'-z\|}=h} \mathbb{P}\left( \left\{ \begin{array}{l} 
		\text{a particle sent from } z' \text{ reaches level } l \\
		\text{ while staying within } \mathscr{C}_{\varepsilon_{n+1}}^{\alpha} \end{array} \right\} \cap \mathscr{E}_M(\gamma) \right) \\
	\leq & K_d\sum_{h \geq Kl^{\eta}} h^{d - 2 + \gamma }(1-c)^{k} = K_d \sum_{h \geq K_dl^{\eta}}h^{d - 2 + \gamma } \exp\left(-c_0\left(\varepsilon_{n+1} \right)h^{1-\alpha}\right),
\end{align*}
where $c_0\left(\varepsilon_{n+1} \right) = -\frac{\log(1-c)}{4\varepsilon_{n+1}}$. Throughout the rest of the proof, $K_d$ will denote a generic constant depending only on $d$, whose value may vary from line to line. 
Now, using the fact that $1-\alpha> 0$, standard computations yield:
\[
K_d \sum_{h \geq Kl^{\eta}}h^{d - 2 + \gamma } \exp\left(-c_0\left(\varepsilon_{n+1} \right)h^{1-\alpha}\right) \leq {K}_d\exp\left(-\frac{1}{2}c_0(\varepsilon_{n+1}) l^{\eta(1-\alpha) }\right).
\]
Combining this result with \eqref{eqn: fine ball short}, we get 
\begin{multline*}
	\mathbb{P}\left(\Over^*_{\alpha}(M_{n+1},\varepsilon_{n+1})\cap\Over^*_{\alpha}(M_{n},\varepsilon_{n})^c \cap \mathscr{E}_M(\gamma) \right) \\
	\begin{split}
		& \leq\sum_{l\geq M_{n+1}}\sum_{\|z\|=l} K_0 e^{-c_1l^{2\alpha}}
		+\sum_{l\geq M_{n+1}}\sum_{\|z\|=l} {K}_d\exp\left(-\frac{1}{2}c_0(\varepsilon_{n+1}) l^{\eta(1-\alpha) }\right) \nonumber \\
		&\leq K_d\sum_{l\geq M_{n+1}} l^{d-2}e^{-c_1l^{2\alpha}}
		+{K}_{d}\sum_{l\geq M_{n+1}} l^{d-2}\exp\left(-\frac{1}{2}c_0(\varepsilon_{n+1}) l^{\eta(1-\alpha) }\right) \\
		&\leq K_d\exp\left({-\frac{c_1M_{n+1}^{2\alpha}}{2}}\right)
		+K_{d}\exp\left(-\frac{1}{4}c_0(\varepsilon_{n+1}) M_{n+1}^{\eta(1-\alpha) }\right).
	\end{split}
\end{multline*}
Since $M\leq M_{n+1}$, it is clear that both of these terms can be bounded by $\frac{C'}{M^L}$ for some constant $C' = C(\varepsilon,n,\alpha,d,L)>0$. 
\end{proof}

\bibliographystyle{acm}
\bibliography{biblio}

\begin{thebibliography}{10}

\bibitem{asselah2013logarithmic}
{\sc {Asselah}, A., and {Gaudilli\`ere}, A.}
\newblock {From logarithmic to subdiffusive polynomial fluctuations for
  internal {DLA} and related growth models.}
\newblock {\em {The Annals of Probability.} 41}, 3A (2013), 1115--1159.

\bibitem{asselah2013sublogarithmic}
{\sc {Asselah}, A., and {Gaudilli\`ere}, A.}
\newblock {Sublogarithmic fluctuations for internal DLA.}
\newblock {\em {The Annals of Probability.} 41}, 3A (2013), 1160--1179.

\bibitem{AG14}
{\sc {Asselah}, A., and {Gaudilli\`ere}, A.}
\newblock {Lower bounds on fluctuations for internal DLA.}
\newblock {\em {Probab. Theory Relat. Fields} 158}, 1-2 (2014), 39--53.

\bibitem{asselah2019outer}
{\sc {Asselah}, A., and {Gaudilli\`ere}, A.}
\newblock {On outer fluctuations for internal DLA.}
\newblock {\em arXiv preprint arXiv:1904.10168\/} (2019).

\bibitem{AR16}
{\sc Asselah, A., and Rahmani, H.}
\newblock Fluctuations for internal {DLA} on the comb.
\newblock {\em Ann. Inst. Henri Poincar\'{e} Probab. Stat. 52}, 1 (2016),
  58--83.

\bibitem{chenavier2024idla}
{\sc Chenavier, N., Coupier, D., Penner, K., and Rousselle, A.}
\newblock I{DLA} with sources in a hyperplane of {$\Bbb Z^d$}.
\newblock {\em Electron. J. Probab. 30\/} (2025), Paper No. 136, 32.

\bibitem{chenavier2023bi}
{\sc Chenavier, N., Coupier, D., and Rousselle, A.}
\newblock The bi-dimensional directed {IDLA} forest.
\newblock {\em The Annals of Applied Probability 33}, 3 (2023), 2247--2290.

\bibitem{Darrow23}
{\sc Darrow, D.}
\newblock Scaling limits of fluctuations of extended-source internal {DLA}.
\newblock {\em J. Anal. Math. 150}, 2 (2023), 449--484.

\bibitem{Darrow24}
{\sc Darrow, D.}
\newblock A convergence rate for extended-source internal {DLA} in the plane.
\newblock {\em Potential Anal. 61}, 1 (2024), 35--64.

\bibitem{diaconis1991growth}
{\sc Diaconis, P., and Fulton, W.}
\newblock A growth model, a game, an algebra, lagrange inversion, and
  characteristic classes.
\newblock {\em Rend. Sem. Mat. Univ. Pol. Torino 49}, 1 (1991), 95--119.

\bibitem{DLYY}
{\sc {Duminil-Copin}, H., {Lucas}, C., {Yadin}, A., and {Yehudayoff}, A.}
\newblock {Containing internal diffusion limited aggregation.}
\newblock {\em {Electron. Commun. Probab.} 18\/} (2013), 8.
\newblock Id/No 50.

\bibitem{gouere2009subcritical}
{\sc Gou{\'e}r{\'e}, J.-B.}
\newblock Subcritical regimes in the {Poisson} {Boolean} model of continuum
  percolation.
\newblock {\em Ann. Probab. 36}, 4 (2008), 1209--1220.

\bibitem{GQ}
{\sc Gravner, J., and Quastel, J.}
\newblock Internal {DLA} and the {Stefan} problem.
\newblock {\em Ann. Probab. 28}, 4 (2000), 1528--1562.

\bibitem{H08}
{\sc Huss, W.}
\newblock Internal diffusion-limited aggregation on non-amenable graphs.
\newblock {\em Electron. Commun. Probab. 13\/} (2008), 272--279.

\bibitem{HS12}
{\sc Huss, W., and Sava, E.}
\newblock Internal aggregation models on comb lattices.
\newblock {\em Electron. J. Probab. 17\/} (2012), no. 30, 21.

\bibitem{jerison2012logarithmic}
{\sc Jerison, D., Levine, L., and Sheffield, S.}
\newblock Logarithmic fluctuations for internal {DLA}.
\newblock {\em J. Amer. Math. Soc. 25}, 1 (2012), 271--301.

\bibitem{JLS13}
{\sc Jerison, D., Levine, L., and Sheffield, S.}
\newblock Internal {DLA} in higher dimensions.
\newblock {\em Electron. J. Probab. 18\/} (2013), No. 98, 14.

\bibitem{JLS14}
{\sc Jerison, D., Levine, L., and Sheffield, S.}
\newblock Internal {DLA} and the {G}aussian free field.
\newblock {\em Duke Math. J. 163}, 2 (2014), 267--308.

\bibitem{JLS14b}
{\sc Jerison, D., Levine, L., and Sheffield, S.}
\newblock Internal {DLA} for cylinders.
\newblock In {\em Advances in analysis: the legacy of {E}lias {M}. {S}tein},
  vol.~50 of {\em Princeton Math. Ser.} Princeton Univ. Press, Princeton, NJ,
  2014, pp.~189--214.

\bibitem{lawler1992internal}
{\sc Lawler, G.~F., Bramson, M., and Griffeath, D.}
\newblock Internal diffusion limited aggregation.
\newblock {\em The Annals of Probability\/} (1992), 2117--2140.

\bibitem{LP10}
{\sc Levine, L., and Peres, Y.}
\newblock Scaling limits for internal aggregation models with multiple sources.
\newblock {\em J. Anal. Math. 111\/} (2010), 151--219.

\bibitem{LS}
{\sc Levine, L., and Silvestri, V.}
\newblock How long does it take for internal {DLA} to forget its initial
  profile?
\newblock {\em Probab. Theory Related Fields 174}, 3-4 (2019), 1219--1271.

\bibitem{meakin1986formation}
{\sc Meakin, P., and Deutch, J.~M.}
\newblock The formation of surfaces by diffusion limited annihilation.
\newblock {\em The Journal of chemical physics 85}, 4 (1986), 2320--2325.

\bibitem{PYZ20}
{\sc Procaccia, E.~B., Ye, J., and Zhang, Y.}
\newblock Stationary {DLA} is well defined.
\newblock {\em J. Stat. Phys. 181}, 4 (2020), 1089--1111.

\bibitem{PYZ21}
{\sc Procaccia, E.~B., Ye, J., and Zhang, Y.}
\newblock Stationary harmonic measure as the scaling limit of truncated
  harmonic measure.
\newblock {\em ALEA, Lat. Am. J. Probab. Math. Stat. 18}, 2 (2021), 1529--1560.

\bibitem{PZ}
{\sc Procaccia, E.~B., and Zhang, Y.}
\newblock Stationary harmonic measure and {DLA} in the upper half plane.
\newblock {\em J. Stat. Phys. 176}, 4 (2019), 946--980.

\bibitem{BlB07}
{\sc S.~Blach\`ere, S., and Brofferio, S.}
\newblock Internal diffusion limited aggregation on discrete groups having
  exponential growth.
\newblock {\em Probab. Theory Related Fields 137}, 3-4 (2007), 323--343.

\bibitem{BoucheronS.2013CiAn}
{\sc S.~Boucheron, C.~L., and Massart, P.}
\newblock {\em Concentration inequalities. A nonasymptotic theory of
  independence}.
\newblock Oxford University Press, 2013.

\bibitem{schneider}
{\sc Schneider, R., and Weil, W.}
\newblock {\em Stochastic and integral geometry}.
\newblock Probab. Appl. Berlin: Springer, 2008.

\bibitem{Shellef}
{\sc {Shellef}, E.}
\newblock {IDLA on the supercritical percolation cluster.}
\newblock {\em {Electron. J. Probab.} 15\/} (2010), 723--740.
\newblock Id/No 24.

\bibitem{S19}
{\sc Silvestri, V.}
\newblock Internal {DLA} on cylinder graphs: fluctuations and mixing.
\newblock {\em Electron. Commun. Probab. 25\/} (2020), Paper No. 61, 14.

\end{thebibliography}

\end{document}